\newtheorem{theorem}{Theorem}
\newtheorem{lemma}[theorem]{Lemma}
\newtheorem*{lemma*}{Lemma}
\newtheorem{corollary}[theorem]{Corollary}
\newtheorem{definition}[theorem]{Definition}
\newtheorem{remark}[theorem]{Remark}
\newcommand{\Z}{\mathbb{Z}}
\newcommand{\R}{\mathbb{R}}
\newcommand{\C}{\mathbb{C}}
\newcommand{\E}{\mathbb{E}}
\newcommand{\Prob}{\mathbb{P}}
\renewcommand{\Re}{\mathrm{Re} \,}
\renewcommand{\Im}{\mathrm{Im} \,}
\numberwithin{equation}{section}
\numberwithin{theorem}{section}
\author{Christian Webb}
\address{Department of mathematics and systems analysis, Aalto University, P.O.
Box 11000, 00076 Aalto, Finland}
\email{christian.webb@aalto.fi}
\author{Mo Dick Wong}
\address{{Statistical Laboratory, DPMMS, University of Cambridge. Wilberforce Rd. Cambridge CB3 0WB, United Kingdom.}}
\email{{mdw46@cam.ac.uk}}
\title{On the moments of the characteristic polynomial of a Ginibre random matrix}
\begin{document}

\begin{abstract}
In this article we study the large $N$ asymptotics of complex moments of the absolute value of the characteristic polynomial of a $N\times N$ complex Ginibre random matrix with the characteristic polynomial evaluated at a point in the unit disk. More precisely, we calculate the large $N$ asymptotics of $\E|\det(G_N-z)|^{\gamma}$, where $G_N$ is a $N\times N$ matrix whose entries are i.i.d and distributed as $N^{-1/2}Z$, $Z$ being a standard complex Gaussian, $\mathrm{Re}(\gamma)>-2$, and $|z|<1$. This expectation is proportional to the determinant of a complex moment matrix with a symbol which is supported in the whole complex plane and has a Fisher-Hartwig type of singularity: $\det(\int_\C w^{i}\overline{w}^j |w-z|^\gamma e^{-N|w|^{2}}d^2 w)_{i,j=0}^{N-1}$. We study the asymptotics of this determinant using recent results due to Lee and Yang concerning the asymptotics of orthogonal polynomials with respect to the weight $|w-z|^\gamma e^{-N|w|^2}d^2 w$ \cite{LY} along with differential identities familiar from the study of asymptotics of Toeplitz and Hankel determinants with Fisher-Hartwig singularities \cite{DIK1,DIK2,Krasovsky}. To our knowledge, even in the case of one singularity, the asymptotics of the determinant of such a moment matrix whose symbol has support in a two-dimensional set and a Fisher-Hartwig singularity, have been previously unknown.
\end{abstract}

\maketitle

\section{Introduction and main result}\label{sec:intro}

The goal of this article is to study the large $N$ asymptotics of moments of the absolute value of the characteristic polynomial of a $N\times N$ complex Ginibre random matrix, with the characteristic polynomial evaluated at a fixed point in the unit disk. More precisely, we prove the following result:

\begin{theorem}\label{th:main}
Let $G_N$ be a $N\times N$ complex Ginibre random matrix $($i.e. its entries are i.i.d. and distributed as $N^{-1/2}Z$, where $Z$ is a standard complex Gaussian$)$, $\mathrm{Re}(\gamma)>-2$, and $z\in \C$ with $|z|<1$. Then as $N\to\infty$

\begin{align*}
\E |\det(G_N-z)|^\gamma=(1+o(1))N^{\frac{\gamma^2}{8}}e^{\frac{\gamma}{2}N(|z|^2-1)}\frac{(2\pi)^{\frac{\gamma}{4}}}{G(1+\frac{\gamma}{2})},
\end{align*}

\noindent where $G$ is the Barnes G-function, and the error is uniform in $\gamma$ when restricted to a compact subset of $\lbrace \gamma\in\C: \mathrm{Re}(\gamma)>-2\rbrace$ and uniform in $z\in \lbrace w\in \C: r\leq |w|\leq R\rbrace$ with fixed $0<r\leq R<1$.
\end{theorem}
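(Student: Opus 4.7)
The plan is to recast the moment as a ratio of planar orthogonal polynomial norming constants, set up a differential identity in $\gamma$, and feed the Lee--Yang asymptotics of \cite{LY} into this identity to extract the stated asymptotics by integrating in $\gamma$ from $0$ to the target value.

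Starting from the joint eigenvalue density of $G_N$ and the Andreief (Heine) identity,
\begin{equation*}
\E|\det(G_N-z)|^\gamma=\frac{D_N(\gamma,z)}{D_N(0,z)}, \qquad D_N(\gamma,z)=\det\Bigl(\int_\C w^i\overline{w}^j|w-z|^\gamma e^{-N|w|^2}d^2w\Bigr)_{i,j=0}^{N-1},
\end{equation*}
and by Gram--Schmidt $D_N(\gamma,z)=\prod_{k=0}^{N-1}h_k^{(\gamma)}$, where $h_k^{(\gamma)}$ is the squared $L^2$-norm of the monic orthogonal polynomial $p_k^{(\gamma)}$ of degree $k$ with respect to $|w-z|^\gamma e^{-N|w|^2}d^2w$. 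The denominator is explicit from $h_k^{(0)}=\pi k!/N^{k+1}$. To interpolate in $\gamma$, the variational characterisation of the monic orthogonal polynomials yields the Fisher--Hartwig-type differential identity
\begin{equation*}
\partial_\gamma \log D_N(\gamma,z)=\int_\C \log|w-z|\,K_N^{(\gamma)}(w,w)\,|w-z|^\gamma e^{-N|w|^2}d^2w,
\end{equation*}
where $K_N^{(\gamma)}(w,w)=\sum_{k=0}^{N-1}|p_k^{(\gamma)}(w)|^2/h_k^{(\gamma)}$. This is the planar analogue of the differential identities used in \cite{DIK1,DIK2,Krasovsky} in the Toeplitz/Hankel Fisher--Hartwig setting.

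The core analytic step is to insert the Lee--Yang asymptotics for $p_k^{(\gamma)}$ and $h_k^{(\gamma)}$ into the right-hand side; the integrand then splits into three regions. In the macroscopic bulk of the droplet $\{|w|<1\}$ away from $z$, the Lee--Yang bulk formulas give $K_N^{(\gamma)}(w,w)|w-z|^\gamma e^{-N|w|^2}\sim N/\pi$, so that the integral against $\log|w-z|$ reproduces the Coulomb-energy contribution $(N/2)(|z|^2-1)$ via the identity $\int_{|w|<1}\log|w-z|d^2w=(\pi/2)(|z|^2-1)$ valid for $|z|<1$; integrated in $\gamma$ this accounts for the exponential factor $e^{(\gamma/2)N(|z|^2-1)}$. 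A boundary layer near $|w|=1$ contributes only at lower order and with vanishing leading $\gamma$-dependence, since no Fisher--Hartwig singularity sits on the droplet edge. Finally, in a microscopic disc of radius of order $N^{-1/2}$ around $w=z$, Lee--Yang provide a planar Fisher--Hartwig local profile, and the logarithmic integral against it produces both the $(\gamma^2/8)\log N$ scaling and an explicit $\gamma$-dependent constant.

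Integrating the identity in $\gamma$ from $0$ to the target value, the $N$-dependent terms assemble into $N^{\gamma^2/8}e^{(\gamma/2)N(|z|^2-1)}$, while the $\gamma$-dependent constant piece is matched to $(2\pi)^{\gamma/4}/G(1+\gamma/2)$ via classical Barnes $G$-function identities relating $\log G(1+\gamma/2)$ to antiderivatives of $\log\Gamma$. Uniformity of the error in $\gamma$ and $z$ over the stated compact sets is inherited from uniform versions of the Lee--Yang expansions. The main obstacle is the local analysis at $w=z$: one needs the Lee--Yang expansions sharply enough, and with sufficient uniform control, that after integrating in $\gamma$ the subleading constants exactly reproduce the Barnes $G$-function prefactor. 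The bulk and edge regimes are quantitatively easier because they do not feel the Fisher--Hartwig structure directly, though the edge contribution still needs to be shown to be $o(1)$ after the $\gamma$-integration.
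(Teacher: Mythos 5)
Your outline shares the paper's top-level structure (Heine-type identity, $D_{N}(\gamma)=\prod_k h_k^{(\gamma)}$, a differential identity in $\gamma$, and integration back to $\gamma=0$), and your kernel-based differential identity $\partial_\gamma\log D_{N}(\gamma,z)=\int_\C \log|w-z|\,K_N^{(\gamma)}(w,w)|w-z|^\gamma e^{-N|w|^2}d^2w$ is itself correct. Your heuristics for the three contributions (bulk gives $e^{\frac{\gamma}{2}N(|z|^2-1)}$ via the logarithmic potential of the equilibrium measure, a microscopic scale near $z$ gives $N^{\gamma^2/8}$ times a constant, edge is negligible) also agree with what the final answer looks like.

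However, there is a genuine gap at the core analytic step. To evaluate $K_N^{(\gamma)}(w,w)=\sum_{k=0}^{N-1}|p_k^{(\gamma)}(w)|^2/h_k^{(\gamma)}$ you need asymptotics for \emph{all} planar orthogonal polynomials $p_0,\dots,p_{N-1}$, uniformly in $k$ and in $w$ across bulk, edge, and the $N^{-1/2}$-neighbourhood of $z$, but the Lee--Yang results \cite{LY} only give the asymptotics of a single polynomial of degree comparable to $N$. More fundamentally, there is no planar Christoffel--Darboux formula that would collapse this sum: $\overline{w}p_k(w)$ is not a polynomial in $w$, so the telescoping that works for OPUC or OPRL does not apply directly. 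This is precisely the obstruction the paper gets around by first converting the two-dimensional orthogonality to orthogonality along a contour encircling $[0,x]$ (the Balogh--Bertola--Lee--McLaughlin device, Lemma \ref{le:ortoc}), introducing the dual family $q_j$, and then proving a bona fide Christoffel--Darboux identity (Lemma \ref{le:cd}) for the contour polynomials. That reduces the differential identity (Lemma \ref{le:DI}) to an \emph{exact} expression involving only $Y_N$, $Y_{N+1}$ at a few points ($0$, $x$, $\infty$) plus an explicit Gamma-function sum, so there is no planar integral left to estimate and the Lee--Yang-type parametrix asymptotics for two degrees suffice. Without the contour transformation and CD step, your plan would require uniform-in-$k$ planar OP asymptotics and a careful multiscale evaluation of a singular integral, neither of which is available in \cite{LY} or sketched in your outline; as written, the middle of the argument does not close.
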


In the remainder of this introduction, we'll briefly discuss some motivation and interpretations of this result as well as give an outline of the rest of the article.

\subsection{Motivation -- moment matrices with Fisher-Hartwig singularities and random geometry} In addition to the direct application of giving information about the spectrum of the matrix $G_N$, understanding moments of the form $\E\prod_{j=1}^k |\det(G_N-z_j)|^{\gamma_j}$ is interesting due to connections to problems in various areas of mathematics. Let us first point out that if one were considering the case where $G_N$ was replaced by a Haar distributed unitary matrix (the circular unitary ensemble), such moments can be expressed as Toeplitz determinants whose symbol has so-called Fisher-Hartwig singularities. The large $N$ asymptotics of such determinants has a rather long and interesting history  -- see e.g. \cite{DIK1,DIK3,DIK2} for background and recent results concerning the problem. In the case where the matrix $G_N$ is replaced by a random Hermitian matrix such as a GUE matrix, such asymptotics have again been successfully studied through a connection to the asymptotics of Hankel determinants with Fisher-Hartwig singularities -- see e.g. \cite{Krasovsky,BWW}. 

As we will recall in Section \ref{sec:ginortho}, also moments of the form $\E\prod_{j=1}^k |\det(G_N-z_j)|^{\gamma_j}$ can be expressed in terms of determinants of moment matrices, but now of the form $\det(\int_{\C} w^i\overline{w}^j \prod_{l=1}^k |w-z_l|^{\gamma_l} e^{-N|w|^2}d^2 w)_{i,j=0}^{N-1}$. Despite the success in the case of Haar distributed unitary matrices and random Hermitian matrices, to our knowledge, there are virtually no results concerning the asymptotics of determinants of such ``fully complex" moment matrices with Fisher-Hartwig singularities (though we refer to \cite[Corollary 2]{FR}, where a representation of even integer moments of the characteristic polynomial in terms of matrix hypergeometric functions is obtained, as well as \cite{FK}, where a slightly different approach is taken for studying even integer moments of characteristic polynomials of complex random matrices). From this point of view, Theorem \ref{th:main} can be seen as a first step in the direction of a Fisher-Hartwig formula for such two-dimensional symbols.

Further motivation for Theorem \ref{th:main} comes from random geometry. In \cite{RiVi}, Rider and Vir\'ag proved a central limit theorem for linear statistics of the Ginibre ensemble (i.e. for $\mathrm{Tr}(f(G_N))$ for suitable functions $f$) and pointed out that this is roughly equivalent to $\log |\det (G_N-z)|-\E\log |\det (G_N-z)|$ converging to a variant of the Gaussian free field in a suitable sense. The limiting object here can be understood as a random generalized function which is formally a Gaussian process whose correlation kernel is $-\frac{1}{2}\log |z-w|$ for $z,w$ in the unit disk. Such random generalized functions have recently been discovered to be closely related to conformally invariant SLE-type random curves as well as the scaling limits of random planar maps -- see e.g. \cite{AJKS,Berestycki2,DKRV,MS,Sheffield}. 

In this connection between the Gaussian free field and random geometry, an important role is played by the so-called Liouville measure. This is a random measure which can formally be written as the exponential of the Gaussian free field. While the Gaussian free field is a random generalized function and exponentiating it is an operation one cannot naively perform, there is a framework for making rigorous sense of such objects. This framework is known as Gaussian multiplicative chaos and is a type of renormalization procedure to define this exponential. The original ideas of the theory go back to Kahane \cite{Kahane}, but we also refer the interested reader to the extensive review of Rhodes and Vargas \cite{RhVa} as well as the concise and elegant approach of Berestycki \cite{Berestycki1} for proving existence and uniqueness of the measure. 

Thus motivated by the central limit theory of Rider and Vir\'ag, a natural question is whether multiplicative chaos measures can be constructed from the characteristic polynomials of the Ginibre ensemble and can the limiting measure be connected to these objects appearing in random geometry. Recently, multiplicative chaos measures have been constructed from characteristic polynomials of random matrices in the setting of random unitary and random Hermitian matrices -- see \cite{BWW,LOS,Webb}. What one would expect from these results is that $\frac{|\det(G_N-z)|^\gamma}{\E |\det(G_N-z)|^\gamma}d^2 z$ converges in law to a multiplicative chaos measure as $N\to\infty$. Moreover, a central question in \cite{BWW,LOS,Webb} is to have precise asymptotics for quantities corresponding to $\E \prod_{j=1}^k |\det(G_N-z_j)|^{\gamma_j}$, so Theorem \ref{th:main} is a first step in this direction as well.

\subsection{Interpretation and comments about Theorem \ref{th:main}} We now make a few brief comments about Theorem \ref{th:main}. First of all, we point out the following immediate corollary of Theorem \ref{th:main}: if one normalizes the logarithm of the characteristic polynomial suitably, then it converges in law to standard Gaussian. More precisely, we have the following result.

\begin{corollary}\label{co:clt}
For any fixed $z\in \C$ with $|z|<1$, 

\begin{equation*}
\frac{1}{\frac{1}{2}\sqrt{\log N}}\left[\log |\det(G_N-z)|-\frac{1}{2}N(|z|^2-1)\right]\stackrel{d}{\to}N(0,1),
\end{equation*}

\noindent as $N\to\infty$. Here $N(0,1)$ denotes the standard Gaussian distribution.

\end{corollary}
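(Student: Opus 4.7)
The plan is to derive the corollary directly from Theorem \ref{th:main} by the method of characteristic functions, exploiting the fact that the asymptotic formula is valid \emph{uniformly} for all complex $\gamma$ in a compact subset of $\{\Re\gamma>-2\}$. Since the theorem gives us control on $\E|\det(G_N-z)|^\gamma$ for purely imaginary $\gamma$, we can read off the characteristic function of the normalized log-modulus and pass to the limit. This is a routine "asymptotics-of-moments implies CLT" argument and I do not anticipate a genuine obstacle; the only point requiring attention is that one must allow $\gamma$ to be complex, which is precisely what Theorem \ref{th:main} permits.

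Concretely, set $X_N = \log|\det(G_N-z)| - \tfrac12 N(|z|^2-1)$ and $Y_N = X_N/(\tfrac12\sqrt{\log N})$. Fix $t\in\R$ and let $\gamma_N = 2it/\sqrt{\log N}$. For every $N$ large enough, $\gamma_N$ lies in any prescribed neighbourhood of the origin, hence in a fixed compact subset of $\{\gamma\in\C:\Re\gamma>-2\}$, so the uniform error estimate of Theorem \ref{th:main} applies to $\E|\det(G_N-z)|^{\gamma_N}$. Since $X_N$ is real, $e^{itY_N} = |\det(G_N-z)|^{\gamma_N}\, e^{-\gamma_N N(|z|^2-1)/2}$, so
\begin{align*}
\E e^{itY_N} = e^{-\gamma_N N(|z|^2-1)/2}\, \E|\det(G_N-z)|^{\gamma_N} = (1+o(1))\, N^{\gamma_N^2/8}\, \frac{(2\pi)^{\gamma_N/4}}{G(1+\gamma_N/2)},
\end{align*}
where I have substituted the formula of Theorem \ref{th:main} and cancelled the $e^{\gamma_N N(|z|^2-1)/2}$ prefactor.

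Now $\gamma_N^2/8 = -t^2/(2\log N)$, so $N^{\gamma_N^2/8} = e^{-t^2/2}$ exactly, while $(2\pi)^{\gamma_N/4}\to 1$ and $G(1+\gamma_N/2)\to G(1)=1$ as $\gamma_N\to 0$ by continuity of the Barnes $G$-function. Therefore $\E e^{itY_N}\to e^{-t^2/2}$ for every $t\in\R$, which is the characteristic function of a standard Gaussian. L\'evy's continuity theorem then yields $Y_N\stackrel{d}{\to}N(0,1)$, which is the content of the corollary.
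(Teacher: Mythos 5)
Your proof is correct and follows exactly the same route as the paper: apply Theorem \ref{th:main} with $\gamma = 2it/\sqrt{\log N}$, use the uniformity in $\gamma$ to justify the substitution, observe that $N^{\gamma^2/8}=e^{-t^2/2}$ while the other factors tend to $1$, and invoke L\'evy's continuity theorem. The paper gives exactly this argument in sketch form; you have merely spelled out the cancellations.
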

\noindent To see this, note that if we write 

\begin{equation*}
X_N(z)=\frac{1}{\frac{1}{2}\sqrt{\log N}}\left[\log |\det(G_N-z)|-\frac{1}{2}N(|z|^2-1)\right],
\end{equation*}

\noindent then Theorem \ref{th:main} applied to the case $\gamma=2it/\sqrt{\log N}$ (uniformity in $\gamma$ plays an important role here) can be written as 

$$
\E e^{it X_N(z)}=(1+o(1))e^{-\frac{t^2}{2}}
$$

\noindent for each $t\in \R$. This of course implies the claim. We omit further details. Such results are typical in many random matrix models (see e.g. \cite{KS}), and may well be known for the Ginibre ensemble through other methods, though we do not know of a reference.

From our point of view, the reason to restrict to $|z|<1$ is that this is a more interesting case than $|z|>1$: one should expect from \cite{RiVi}, that for each $z\in \C$ for which $|z|>1$, $\log |\det (G_N-z)|-\E\log |\det(G_N-z)|$ converges in law to a real valued Gaussian random variable  -- there should be no $N^{\gamma^2/8}$ appearing in this case. We expect that this could be proven using a similar approach as the one we take here (using the results of \cite{LY} with $|z|>1$), but we do not explore this further. Note that another reason to distinguish between $|z|<1$ and $|z|>1$ is that in our normalization, the unit disk is the support of the equilibrium measure for the Ginibre ensemble, so it is the set where the eigenvalues should accumulate in the large $N$ limit.

We also point out that Theorem \ref{th:main} is easy to justify on a heuristic level. Indeed, proving this result for $z=0$ is very simple, as the relevant orthogonal polynomials can be calculated explicitly (see Lemma \ref{le:ortop} for the definition and importance of the orthogonal polynomials). To heuristically justify our result for $z\neq 0$, we point out that from \cite{RiVi}, one might expect that $\log |\det (G_N-z)|-\E\log |\det (G_N-z)|$ is a stationary stochastic process inside the unit disk (recall that formally this converged to a Gaussian process with translation invariant covariance), which would suggest that in Theorem \ref{th:main}, the only $z$-dependent contribution can come from $\E\log |\det (G_N-z)|$. Using e.g. \cite[Theorem 2.1]{AHM}, one would expect that 

\begin{align*}
\E\log|\det(G_N-z)|&=N \int_{|w|<1}\log |w-z|\frac{d^2w}{\pi}+\frac{1}{8\pi}\int_{|w|<1}\Delta_w \log |w-z| d^2 w+o(1)\\
&=\frac{N}{2}(|z|^2-1)+\frac{1}{4}+o(1),
\end{align*}

\noindent which suggests that $\E|\det(G_N-z)|^\gamma=\E|\det(G_N)|^\gamma e^{\frac{\gamma}{2}N |z|^2}(1+o(1))$. This is indeed true by Theorem \ref{th:main}.

Finally based on the analogy with the case of random Hermitian matrices from \cite{Krasovsky,BWW} as well as the CLT of Rider and Vir\'ag \cite{RiVi} (and that from \cite{AHM}), it would be natural to expect that a more general Fisher-Hartwig formula exists also for the Ginibre ensemble. We expect that the correct formulation would be the following: let $z_j$ be distinct fixed points in the unit disk, $\mathrm{Re}(\gamma_j)>-2$ for all $j=1,...,k$, and $f:\C\to \R$ smooth enough with compact support in the unit disk (for simplicity), then

\begin{align*}
\E e^{\mathrm{Tr} f(G_N)}\prod_{j=1}^k |\det(G_N-z_j)|^{\gamma_j}&=(1+o(1))e^{N\int_{|z|<1} f(z)\frac{d^2z}{\pi}+\frac{1}{8\pi}\int_{|z|<1}|\nabla f(z)|^2 d^2 z-\sum_{j=1}^k \frac{\gamma_j}{2} f(z_j)}\\
&\quad \times \prod_{j=1}^k N^{\frac{\gamma_j^2}{8}}e^{\frac{N}{2}\gamma_j(|z_j|^2-1)}\frac{(2\pi)^{\frac{\gamma_j}{4}}}{G(1+\frac{\gamma_j}{2})}\prod_{i<j}|z_i-z_j|^{-\frac{\gamma_i\gamma_j}{2}}.
\end{align*}

\noindent In fact, it's natural to expect that a related formula exists for more general ensembles with a regular enough confining potential. Unfortunately, we suspect that this kind of results with several singularities or non-zero $f$ are out of reach with current tools.

\subsection{Outline of the article} The outline of this article is the following. In Section \ref{sec:ginortho}, we recall how orthogonal polynomials, which are orthogonal with respect to the weight $F(w)=|w-z|^{\gamma}e^{-N|w|^2}$ (supported on the whole complex plane), are related to expectations of the form relevant to Theorem \ref{th:main}. We also recall a result of Balogh, Bertola, Lee, and McLaughlin which lets us transform orthogonality with respect to $F$ into orthogonality with respect to a weight which is supported on a contour in $\C$. In Section \ref{sec:RHPdi}, we recall how to encode these orthogonal polynomials associated to a contour into a Riemann-Hilbert problem, as well as generalize differential identities from \cite{Krasovsky,DIK1,DIK2} to facilitate efficient asymptotic analysis of the determinant of the moment matrix. Then in Section \ref{sec:RHPNasy}, we use results from \cite{LY} to solve our Riemann-Hilbert problem asymptotically. Finally in Section \ref{sec:diint}, we use our asymptotic solution of the Riemann-Hilbert problem to study the asymptotics of our differential identity, and prove Theorem \ref{th:main} by integrating the differential identity. For completeness, we also recall some basic facts about orthogonal polynomials and Riemann-Hilbert problems as well as some of the results of \cite{LY} in appendices.

\vspace{0.3cm}

{\bf Acknowledgements: } C. W. was supported by the Academy of Finland grants 288318 and 308123. M. D. Wong is supported by the Croucher Foundation Scholarship and EPSRC grant EP/L016516/1 for his PhD study at Cambridge Centre for Analysis. C. W. wishes to thank Y.V. Fyodorov for comments about the problem studied in this article.

\section{The Ginibre ensemble and orthogonal polynomials}\label{sec:ginortho}

In this section, we recall some basic facts about the complex Ginibre ensemble, such as the distribution of the eigenvalues, how expectations of suitable functions of eigenvalues of Ginibre random matrices can be expressed in terms of determinants of complex moment matrices, as well as how such questions relate to orthogonal polynomials. We also recall results from \cite{BBLM,LY} which show that the orthogonal polynomials associated to the expectation $\E|\det(G_N-z)|^\gamma$ also satisfy suitable orthogonality conditions on certain contours in the complex plane. Then in Section \ref{sec:RHPdi}, we apply these results to transform the analysis of $\E|\det(G_N-z)|^\gamma$ into a question of the asymptotic analysis of a suitable Riemann-Hilbert problem. For the convenience of the reader, we sketch proofs of some of the statements of this section in Appendix \ref{app:ortho}.

As stated in Theorem \ref{th:main}, $G_N$ is a random $N\times N$ matrix whose entries are i.i.d. and distributed as $N^{-1/2}Z$, where $Z$ is a standard complex Gaussian. We recall that the law of the eigenvalues of $G_N$ can then be expressed in the following form \cite{G}

\begin{equation}\label{eq:evlaw}
\Prob(d^2z_1,...,d^2z_N)=\frac{1}{Z_N}\prod_{1\leq i<j\leq N}|z_i-z_j|^2 \prod_{j=1}^N e^{-N|z_j|^2}d^2 z_j
\end{equation}

\noindent on $\C^N$. Here the normalizing constant $Z_N$ is

\begin{equation*}
Z_N=\pi^N\frac{\prod_{k=1}^N k!}{N^{\frac{N(N+1)}{2}}}.
\end{equation*}

\noindent We'll denote integration with respect to $\Prob(d^2z_1,...,d^2z_N)$ by $\E$ -- so we suppress the dependence on $N$ in our notation.

We now recall a Heine-Szeg\H{o}-type identity which connects the Ginibre ensemble to determinants of complex moment matrices.

\begin{lemma}\label{le:heine}
Let $F:\C\to \C$ be regular enough $($so that $\int_\C |w|^{k}|F(w)|e^{-N|w|^2}d^2w<\infty$ for all $k\geq 0)$, then 

\begin{equation*}
\E\prod_{j=1}^N F(z_j)=\frac{N!}{Z_{N}}D_{N-1}(F):=\frac{N!}{Z_{N}}\det \left(\int_\C w^i\overline{w}^j F(w) e^{-N|w|^2}d^2w\right)_{i,j=0}^{N-1}.
\end{equation*}
\end{lemma}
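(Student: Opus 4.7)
The plan is a standard application of Andréief's (or equivalently the Heine--Szegő) identity to the joint eigenvalue density in (2.1). First I would substitute (2.1) into $\E\prod_{j=1}^N F(z_j)$ and factor the squared Vandermonde using $|\det A|^2=\det A\cdot\overline{\det A}$, obtaining
\begin{equation*}
\prod_{1\le i<j\le N}|z_i-z_j|^2=\det(z_i^{j-1})_{i,j=1}^N\,\det(\overline{z_i}^{j-1})_{i,j=1}^N,
\end{equation*}
so that the expectation becomes a single $N$-fold integral over $\C^N$ of the product of two $N\times N$ determinants against the product measure $\prod_k F(z_k)e^{-N|z_k|^2}d^2 z_k$, divided by $Z_N$.

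Next I would apply Andréief's identity: for integrable families $\{\phi_i\},\{\psi_j\}$ on a measure space $(X,\mu)$ one has
\begin{equation*}
\int_{X^N}\det(\phi_i(x_k))\det(\psi_j(x_k))\prod_{k=1}^N d\mu(x_k)=N!\,\det\left(\int_X \phi_i(x)\psi_j(x)\,d\mu(x)\right)_{i,j=1}^N.
\end{equation*}
Taking $\phi_i(w)=w^{i-1}$, $\psi_j(w)=\overline{w}^{j-1}$, and $d\mu(w)=F(w)e^{-N|w|^2}d^2 w$, the right-hand side is $N!\,D_{N-1}(F)$ after reindexing $i,j\mapsto i+1,j+1$; combined with the $1/Z_N$ prefactor, this is exactly the claimed formula.

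There is no real analytic obstacle here: the growth hypothesis on $F$ is precisely what is required to justify Fubini's theorem and the interchange of the finite permutation sums (arising from expanding the two determinants) with integration. If one wished to avoid quoting Andréief's identity, a one-line derivation would expand both determinants as sums over $\sigma,\tau\in S_N$, use $\mathrm{sgn}(\sigma)\mathrm{sgn}(\tau)=\mathrm{sgn}(\sigma\tau^{-1})$ together with the permutation invariance of the integrand in the variables $z_1,\ldots,z_N$, and relabel one of the summation variables to collapse the double sum into a single $N!\,\det(\cdot)$ expression. I would include at most a brief remark along these lines rather than reproducing the full bookkeeping.
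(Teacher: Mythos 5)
Your proof is correct and takes essentially the same route the paper sketches: write the squared Vandermonde as a product of two determinants and apply Andréief's identity (the paper calls this the ``standard generalization relying on the Vandermonde determinant structure'' and omits the details you filled in). The only minor point worth keeping is your remark that the integrability hypothesis on $F$ is exactly what justifies Fubini when the two determinants are expanded over $S_N \times S_N$, since each summand factors into integrals of the form $\int_\C |w|^{a+b}|F(w)|e^{-N|w|^2}\,d^2 w$ with $a,b\le N-1$.
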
 

\noindent This is a straightforward generalization of a corresponding identity for random Hermitian and random unitary matrices and relies on noticing that $\prod_{i<j}|z_i-z_j|^2$ in \eqref{eq:evlaw} can be written in terms of the Vandermonde determinant which then allows this determinantal representation. We omit further details.

The next fact we need is the connection between $D_{N-1}(F)$ defined in Lemma \ref{le:heine} and suitable orthogonal polynomials. To do this, let us introduce the notation 

$$D_k^{(N)}(F)=\det\left(\int_\C s^i \overline{s}^j F(s)e^{-N|s|^2}ds\right)_{i,j=0}^k$$

\noindent and if $D_{j-1}^{(N)}(F),D_j^{(N)}(F)\neq 0$, write 

\begin{equation}\label{eq:polydet}
p_j(w)=\frac{1}{\sqrt{D_{j-1}^{(N)}(F)D_{j}^{(N)}(F)}}\begin{vmatrix}
\int_\C F(s)e^{-N|s|^2}d^2s  & \cdots & \int_\C s^{j}F(s)e^{-N|s|^2}d^2s\\
\vdots &   & \vdots\\
\int_\C \overline{s}^{j-1}F(s)e^{-N|s|^2}d^2s & \cdots & \int_\C\overline{s}^{j-1}s^{j}F(s)e^{-N|s|^2}d^2s\\
1 &  \cdots & w^{j}
\end{vmatrix},
\end{equation}

\noindent where the branch of the square root is the principal one and the interpretation is that $D_{-1}^{(N)}(F)=1$ and for $j=0$, the determinant is replaced by $1$.

The following (standard) lemma demonstrates some basic orthogonality properties of the polynomials $p_j$ along with the connection between $D_{N-1}(F)$ and the leading order coefficients of $p_j$.

\begin{lemma}\label{le:ortop}
Let $F:\C\to \C$ be regular enough $($so that $\int_\C |w|^k {\color{blue}|}F(w){\color{blue}|}e^{-N|w|^2}d^2 w<\infty$ for all $k\geq 0)$ and assume that $D_{j-1}^{(N)}(F),D_j^{(N)}(F)\neq 0$. Let us also write $\chi_j$ for the coefficient of $w^j$ in $p_j(w)$ $($note that under our assumptions, this is non-zero$)$. Then for any $0\leq k\leq j$
\begin{equation}\label{eq:ortop}
\int_\C p_j(w) \overline{w}^k F(w)e^{-N|w|^2}d^2 w=\frac{\delta_{j,k}}{\chi_j}.
\end{equation}

\noindent Moreover if $D_{j}^{(N)}(F)\neq 0$ for $0\leq j\leq N-1$, then

\begin{equation}\label{eq:DNchi}
D_{N-1}(F)=\prod_{j=0}^{N-1}\chi_j^{-2}.
\end{equation}
\end{lemma}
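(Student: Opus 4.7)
The plan is to verify both statements by direct computation with the determinant defining $p_j(w)$, exploiting the fact that integration of rows or columns of the determinant against the weight reproduces rows of the underlying moment matrix.

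For the orthogonality relation \eqref{eq:ortop}, I would multiply the determinant in \eqref{eq:polydet} by $\overline{w}^k F(w) e^{-N|w|^2}$ and integrate over $\C$, pulling the integral inside the determinant so that it acts only on the bottom row (which contains $1, w, \dots, w^j$). The bottom row is transformed into
\begin{equation*}
\left(\int_\C \overline{w}^k F(w) e^{-N|w|^2} d^2w,\ \dots,\ \int_\C \overline{w}^k w^j F(w) e^{-N|w|^2} d^2w\right),
\end{equation*}
which for $0\le k\le j-1$ coincides with the $k$-th row of the matrix, so the determinant vanishes. For $k=j$ the enlarged matrix is exactly the moment matrix of size $j+1$, whose determinant is $D_j^{(N)}(F)$; dividing by the normalizing factor gives $\sqrt{D_j^{(N)}(F)/D_{j-1}^{(N)}(F)}$. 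The leading coefficient $\chi_j$ is read off from the cofactor of $w^j$ in the last row of \eqref{eq:polydet}, namely the top-left $j\times j$ determinant $D_{j-1}^{(N)}(F)$, giving $\chi_j=\sqrt{D_{j-1}^{(N)}(F)/D_j^{(N)}(F)}$. Comparing these two expressions yields the claim $\int p_j\,\overline{w}^j F e^{-N|\cdot|^2}d^2w=1/\chi_j$ (with the branches of the square roots matching since they come from the same normalization in \eqref{eq:polydet}).

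For the determinantal identity \eqref{eq:DNchi}, I would use the relation $\chi_j^{-2}=D_j^{(N)}(F)/D_{j-1}^{(N)}(F)$ derived above and telescope:
\begin{equation*}
\prod_{j=0}^{N-1}\chi_j^{-2}=\prod_{j=0}^{N-1}\frac{D_j^{(N)}(F)}{D_{j-1}^{(N)}(F)}=\frac{D_{N-1}^{(N)}(F)}{D_{-1}^{(N)}(F)}=D_{N-1}(F),
\end{equation*}
using the convention $D_{-1}^{(N)}(F)=1$ and the fact that $D_{N-1}^{(N)}(F)$ is precisely $D_{N-1}(F)$ from Lemma \ref{le:heine}.

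There is no substantial obstacle here: the argument is a standard Gram--Schmidt/Heine-type manipulation. The only mild subtlety is bookkeeping of the principal branch of the square root in \eqref{eq:polydet}, which I would address by noting that the same square root appears both in $\chi_j$ (as a cofactor) and in the evaluation of the orthogonality integral, so the branch choice is automatically consistent on both sides of \eqref{eq:ortop}.
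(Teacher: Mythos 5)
Your proof is correct and follows essentially the same route as the paper's sketch: linearity of the determinant so that the integrated bottom row duplicates the $k$-th row when $k<j$, comparison with $D_j^{(N)}(F)$ and the cofactor of $w^j$ when $k=j$, and telescoping $\chi_j^{-2}=D_j^{(N)}(F)/D_{j-1}^{(N)}(F)$ for \eqref{eq:DNchi}. Your remark about branch consistency is a sensible addition, though it can be bypassed entirely by noting that both sides of \eqref{eq:ortop} at $k=j$ equal $D_j^{(N)}(F)/\sqrt{D_{j-1}^{(N)}(F)D_j^{(N)}(F)}$ once the same normalization from \eqref{eq:polydet} is inserted.
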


\noindent The orthogonality condition \eqref{eq:ortop} follows easily from noticing that by linearity of the determinant, if $k <j$, the determinantal expression of $\int_\C p_j(w)\overline{w}^k F(w)e^{-N|w|^2}d^2 w$ will have two identical rows and thus vanish. For $j=k$, \eqref{eq:ortop} follows from comparing with \eqref{eq:polydet}. \eqref{eq:DNchi} follows from our definition of $D_{-1}^{(N)}(F)=1$ and the telescopic structure of the product (in particular, $\chi_j^{-2}=D_j^{(N)}(F)/D_{j-1}^{(N)}(F)$). We omit further details.

The next ingredient we shall need for our Riemann-Hilbert problem is a fact noticed in \cite{BBLM}, namely that in the special case when $F(w)=|w-z|^\gamma$, the polynomials $p_j$ from Lemma \ref{le:ortop} satisfy certain orthogonality relations on suitable contours in the complex plane as well. To simplify notation slightly, we shall first note that the law of $(z_i)_{i=1}^N$ is invariant under rotations: it follows easily from \eqref{eq:evlaw} that for fixed $\theta\in \R$, $(e^{i\theta}z_j)_{j=1}^N$ has the same law as $(z_j)_{j=1}^N$. From this it follows that $\E|\det(G_N-z)|^\gamma=\E|\det(G_N-|z|)|^\gamma$. We thus see that for Theorem \ref{th:main}, it's enough to understand the asymptotics of $\E|\det(G_N-x)|^\gamma$ for $x\in(0,1)$. To emphasize this we now restrict our attention to weights $F$ that are relevant to this expectation: we fix our notation in the following definition.

\begin{definition}\label{def:Fp}
For $x\in(0,1)$ and $\mathrm{Re}(\gamma)>-2$, let $F:\C\to \C$, $F(w)=|w-x|^\gamma$. Moreover, when they exist, $($i.e. when $D_{j-1}^{(N)}(F),D_j^{(N)}(F)\neq 0)$ let $(p_j)_{j=0}^\infty$ be the polynomials from Lemma \ref{le:ortop} associated to this $F$ and let $\chi_j$ be the coefficient of $w^j$ in $p_j(w)$ -- in our notation, we omit the dependence on $N$, $\gamma$, and $x$.
\end{definition}

\noindent The statement about orthogonality on suitable contours discovered in \cite[Lemma 3.1]{BBLM} is the following.

\begin{lemma}[Balogh, Bertola, Lee, and McLaughlin]\label{le:ortoc}
Let $\Sigma$ be a simple, smooth, and closed contour in the complex plane, and let it encircle $[0,x]$, possibly passing through $x$, but not other points of $[0,x]$, and let it be oriented in the counter-clockwise direction. Let 

\begin{equation}\label{eq:f}
f(w)=w^{-\frac{\gamma}{2}}(w-x)^{\frac{\gamma}{2}}e^{-Nxw},
\end{equation}

\noindent where the roots are according to the principal branch $($so the branch cut of $f$ is $[0,x])$. If $D_{j-1}^{(N)}(F),D_j^{(N)}(F)\neq 0$, then for $0\leq k\leq j$,

\begin{equation}\label{eq:ortoc}
\oint_\Sigma p_j(w)w^{-k}f(w)\frac{dw}{2\pi i w}=\begin{cases}
0, & k<j\\
\frac{1}{\pi}\frac{N^{1+\frac{\gamma}{2}+k}}{\Gamma(1+\frac{\gamma}{2}+k)}\frac{1}{\chi_j}, & k=j
\end{cases}.
\end{equation}
\end{lemma}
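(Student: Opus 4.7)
Plan: I would reduce \eqref{eq:ortoc} to a moment-matching identity between the contour integral and the 2D integral, and verify that identity by direct computation. Writing $p_j(w)=\sum_{m=0}^{j}\alpha_m w^m$, by linearity it is enough to prove, for all non-negative integers $m$ and $k$, the monomial identity
\[
\oint_\Sigma w^m w^{-k} f(w)\,\frac{dw}{2\pi i\, w}=\frac{1}{\pi}\frac{N^{1+\gamma/2+k}}{\Gamma(1+\gamma/2+k)}\int_\C w^m\overline{w}^k|w-x|^\gamma e^{-N|w|^2}\,d^2w.
\]
Assuming this, summing over $m$ with weights $\alpha_m$ and applying \eqref{eq:ortop} yields $0$ when $k<j$, and $\frac{1}{\pi}\frac{N^{1+\gamma/2+k}}{\Gamma(1+\gamma/2+k)\chi_j}$ when $k=j$, which is exactly \eqref{eq:ortoc}.

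To prove the monomial identity I would compute both sides explicitly. For the contour integral, I would deform $\Sigma$ onto the cut $[0,x]$; using that $w^{-\gamma/2}(w-x)^{\gamma/2}$ picks up the jump $t^{-\gamma/2}(x-t)^{\gamma/2}(e^{i\pi\gamma/2}-e^{-i\pi\gamma/2})$ across $[0,x]$, the integral collapses to
\[
-\frac{\sin(\pi\gamma/2)}{\pi}\int_0^x t^{m-k-1-\gamma/2}(x-t)^{\gamma/2}e^{-Nxt}\,dt,
\]
which after the substitution $t=xs$ is a constant times the Kummer confluent hypergeometric function ${}_1F_1(m-k-\gamma/2;\,m-k+1;\,-Nx^2)$. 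For the 2D integral, I would use the Schwinger representation $|w-x|^\gamma=\Gamma(-\gamma/2)^{-1}\int_0^\infty s^{-\gamma/2-1}e^{-s|w-x|^2}\,ds$ (valid for $\mathrm{Re}(\gamma)<0$), perform the inner Gaussian integral in $w$ by completing the square in the exponent $-(s+N)|w|^2+2sxu-sx^2$, and then change variables $\tau=s/(s+N)$ to obtain another Kummer-type integral. A direct comparison of the resulting expressions, using the reflection formula $\Gamma(-\gamma/2)\Gamma(1+\gamma/2)=-\pi/\sin(\pi\gamma/2)$, establishes the identity for $\mathrm{Re}(\gamma)<0$. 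The full range $\mathrm{Re}(\gamma)>-2$ then follows by analytic continuation in $\gamma$, since both sides are holomorphic in $\gamma$ on this strip for fixed $N$, $x$, $m$, $k$.

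The main technical obstacle I expect is the careful handling of the branch cut and the small circular arcs at the endpoints $0$ and $x$ when deforming $\Sigma$ onto $[0,x]$: near $w=x$ the arc contribution is of order $\epsilon^{1+\gamma/2}$ and vanishes exactly because $\mathrm{Re}(\gamma)>-2$ (this is precisely the hypothesis of the lemma), but near $w=0$ the vanishing of the arc requires $\mathrm{Re}(m-k-\gamma/2)>0$, which can fail for $m<k$ or large $\mathrm{Re}(\gamma)$. This is the essential reason for first proving the identity in the strip $\mathrm{Re}(\gamma)<0$, where both the cut deformation and the Schwinger representation are unambiguous for all $m,k$, and then extending to $\mathrm{Re}(\gamma)>-2$ by analytic continuation.
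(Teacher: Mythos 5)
Your reduction to a monomial identity is the right instinct, but the monomial identity you propose is false, and this is a genuine gap rather than a technicality. Testing it with $m=0$, $k=1$: your identity would force $\oint_\Sigma f(w)\frac{dw}{2\pi i w}=0$, but this quantity is proportional (with a nonzero constant) to $\int_\C |w-x|^\gamma e^{-N|w|^2}\,d^2w>0$. The correct relation between the two kinds of moments, obtained in the paper (see \eqref{eq:moms} in Appendix \ref{app:ortho}), is
\begin{equation*}
\int_\C w^m\overline{w}^k F(w)e^{-N|w|^2}\,d^2w=\sum_{l=0}^k\binom{k}{l}x^{k-l}\,\frac{\pi\,\Gamma(1+\tfrac{\gamma}{2}+l)}{N^{1+\frac{\gamma}{2}+l}}\oint_\Sigma w^{m-l}f(w)\frac{dw}{2\pi i\, w},
\end{equation*}
which contains all $l\le k$, not just the $l=k$ term you assert. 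The clean one-to-one monomial identity that actually holds is
\begin{equation*}
\oint_\Sigma w^{m-k}f(w)\frac{dw}{2\pi i\, w}=\frac{1}{\pi}\frac{N^{1+\frac{\gamma}{2}+k}}{\Gamma(1+\frac{\gamma}{2}+k)}\int_\C w^m(\overline{w}-x)^k F(w)e^{-N|w|^2}\,d^2w,
\end{equation*}
with $(\overline{w}-x)^k$ replacing $\overline{w}^k$ on the right. This shift is precisely what makes the lemma work: when you pair against $p_j(w)$ with $k\le j$, the difference $(\overline{w}-x)^k-\overline{w}^k$ is a polynomial in $\overline{w}$ of degree $<k\le j$ and integrates to zero by \eqref{eq:ortop}, so the desired $\delta_{j,k}/\chi_j$ still emerges. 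Your argument never introduces this shift, and without it the reduction breaks.

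There is a second, subsidiary problem with your plan of collapsing $\Sigma$ onto the cut $[0,x]$: the small-arc contribution at $w=0$ scales like $\epsilon^{\,m-k-\mathrm{Re}(\gamma)/2}$ and vanishes only if $\mathrm{Re}(\gamma)<2(m-k)$, which for $m<k$ forces $\mathrm{Re}(\gamma)<-2$ — exactly the region where the two-dimensional integral no longer converges. So there is no strip of $\gamma$ in which both sides of your proposed identity have the representations you need; the strip $\mathrm{Re}(\gamma)<0$ you suggest does not suffice. The paper avoids this altogether: it never collapses onto the cut. Instead it constructs an antiderivative $h(w)$ with $\partial_{\overline{w}}h=|w-x|^\gamma(\overline{w}-x)^k e^{-N|w|^2}$, converts the planar integral into a contour integral over $|w|=r$ by Green's theorem, observes that on that large circle $h$ is (up to an exponentially small correction) equal to an explicit function analytic off $[0,x]$ proportional to $w^{-k}f(w)/w$, and then deforms $|w|=r$ inward to $\Sigma$ through the region where that function is analytic. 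That chain of steps works for the full range $\mathrm{Re}(\gamma)>-2$ and for all $k\le j$ without any boundary-of-validity issues.
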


As the situation considered in \cite{BBLM} is slightly different -- for them $\gamma$ is proportional to $N$ (and real), and their result is stated for contours avoiding $x$, we sketch a proof in Appendix \ref{app:ortho}. We also point out that if $\Sigma$ were the unit circle, \eqref{eq:ortoc} would look like a basic orthogonality condition for polynomials on the unit circle. Thus (as in \cite{BBLM,LY}) it is fruitful to define a dual family of polynomials which are orthogonal to the polynomials $p_j$ with respect to the pairing coming from \eqref{eq:ortoc}. We now recall how these dual orthogonal polynomials are constructed and how their leading order coefficient is related to $\chi_j$.

\begin{lemma}\label{le:ortoq}
Let $\Sigma$ and $f$ be as in Lemma \ref{le:ortoc}.

\begin{itemize}[leftmargin=0.5cm]
\item[{\rm (i)}] Let us define for any $j\geq 0$

\begin{equation*}
\widehat{D}_j=\det\left(\oint_\Sigma w^{-(r-s)}f(w)\frac{dw}{2\pi i w}\right)_{r,s=0}^j.
\end{equation*}

\noindent Then 

\begin{equation}\label{eq:Dhat}
\widehat{D}_j=D_j^{(N)}(F)\prod_{k=0}^j\frac{1}{\pi}\frac{N^{1+\frac{\gamma}{2}+k}}{\Gamma(1+\frac{\gamma}{2}+k)}.
\end{equation}

\item[{\rm (ii)}] Assume that $D_{j-1}^{(N)}(F), D_j^{(N)}(F)\neq 0$ and define for $w\neq 0$ 

\begin{equation}\label{eq:qdef}
q_j(w^{-1})=\frac{\prod_{k=0}^j \frac{\pi \Gamma\left(\frac{\gamma}{2}+k+1\right)}{N^{\frac{\gamma}{2}+k+1}}}{\sqrt{D_{j-1}^{(N)}(F)D_j^{(N)}(F)}}\begin{vmatrix}
\oint_\Sigma f(s)\frac{ds}{2\pi i s} & \cdots & \oint_\Sigma s^{j-1}f(s)\frac{ds}{2\pi i s} & 1\\
\vdots &  & \vdots & \vdots \\
\oint_\Sigma s^{-j} f(s)\frac{ds}{2\pi i s} & \cdots & \oint_\Sigma s^{-1}f(s)\frac{ds}{2\pi i s} & w^{-j}
\end{vmatrix},
\end{equation}

\noindent where the branch of the root is the principal one. Then for $0\leq k\leq j$

\begin{equation}\label{eq:qporto}
\oint_\Sigma w^kq_j(w^{-1})f(w)\frac{dw}{2\pi i w}=\frac{\delta_{j,k}}{\chi_k}
\end{equation}

\noindent and if we write $\widehat{\chi}_j$ for the coefficient of $w^{-j}$ in $q_j(w^{-1})$, then 

\begin{equation}\label{eq:chihat}
\widehat{\chi}_j=\chi_j\frac{\pi \Gamma(1+\frac{\gamma}{2}+j)}{N^{1+\frac{\gamma}{2}+j}}.
\end{equation}
\end{itemize}

\end{lemma}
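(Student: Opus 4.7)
The plan is to prove (i) first, since both claims in (ii) follow from short cofactor computations in \eqref{eq:qdef} once (i) is known. Throughout, I will abbreviate $c_k = \frac{1}{\pi}N^{1+\gamma/2+k}/\Gamma(1+\gamma/2+k)$ and $\nu_m = \oint_\Sigma w^m f(w)\, dw/(2\pi i w)$, so that $\widehat{D}_j = \det(\nu_{s-r})_{r,s=0}^j$.

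For (i) I will proceed by induction on $j$ with base case $\widehat{D}_{-1} = D_{-1}^{(N)}(F) = 1$. The inductive step rests on producing a \emph{second} determinantal formula for $p_j$ built from the contour moments $\nu_m$. Assuming $\widehat{D}_{j-1}\neq 0$ by induction, Lemma \ref{le:ortoc} says that $p_j$ lies in the one-dimensional subspace of polynomials of degree $\leq j$ annihilated by the $j$ functionals $g\mapsto \oint g(w) w^{-k} f(w)\, dw/(2\pi i w)$ for $k=0,\ldots,j-1$; one-dimensionality follows from $\widehat{D}_{j-1}\neq 0$, which forces the $j\times(j+1)$ constraint matrix to have full rank. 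Cramer's rule then identifies $p_j$ up to scaling with
\begin{equation*}
\det\begin{pmatrix} \nu_0 & \nu_1 & \cdots & \nu_j \\ \nu_{-1} & \nu_0 & \cdots & \nu_{j-1} \\ \vdots & & & \vdots \\ \nu_{-j+1} & \nu_{-j+2} & \cdots & \nu_1 \\ 1 & w & \cdots & w^j \end{pmatrix},
\end{equation*}
whose leading coefficient (the $(j,j)$-cofactor of the above matrix) equals $\widehat{D}_{j-1}$, so matching to $\chi_j$ fixes the scaling constant as $\chi_j/\widehat{D}_{j-1}$. I then apply $\oint(\cdot)\, w^{-j} f(w)\, dw/(2\pi i w)$ to both sides: Lemma \ref{le:ortoc} gives $c_j/\chi_j$ on the left, while on the right the last row turns into $(\nu_{-j},\nu_{1-j},\ldots,\nu_0)$, completing the matrix whose determinant is $\widehat{D}_j$. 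Equating and using $\chi_j^2 = D_{j-1}^{(N)}(F)/D_j^{(N)}(F)$ (which is a consequence of \eqref{eq:DNchi}) yields the recursion $\widehat{D}_j/\widehat{D}_{j-1} = c_j D_j^{(N)}(F)/D_{j-1}^{(N)}(F)$; telescoping delivers \eqref{eq:Dhat}, and $\widehat{D}_j\neq 0$ comes along for free, closing the induction.

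Part (ii) then follows directly from (i). For \eqref{eq:qporto}, I apply $\oint(\cdot)\, w^k f(w)\, dw/(2\pi i w)$ to the determinant in \eqref{eq:qdef}: the last column $(w^{-i})_{i=0}^j$ transforms into $(\nu_{k-i})_{i=0}^j$. If $k<j$ this duplicates column $k$ of the matrix and the determinant vanishes; if $k=j$ the matrix becomes precisely the one whose determinant is $\widehat{D}_j$, and combining with the prefactor in \eqref{eq:qdef} and substituting \eqref{eq:Dhat} collapses to $1/\chi_j$, matching $\delta_{jk}/\chi_k$. For \eqref{eq:chihat}, cofactor expansion of \eqref{eq:qdef} along its last column identifies the coefficient of $w^{-j}$ as the prefactor times $\widehat{D}_{j-1}$; inserting \eqref{eq:Dhat} for $\widehat{D}_{j-1}$ and simplifying produces $\chi_j \pi \Gamma(1+\gamma/2+j)/N^{1+\gamma/2+j}$. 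The main obstacle in the whole argument is the uniqueness step inside (i): identifying $p_j$ with the contour-moment determinant requires $\widehat{D}_{j-1}\neq 0$, which is itself part of what is being proved. The induction is exactly the device that makes this self-consistent, propagating the non-vanishing of $\widehat{D}_{j-1}$ alongside the determinantal identity at each level.
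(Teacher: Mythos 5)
Your proof of part (ii) is essentially the paper's argument and is correct: applying $\oint(\cdot)w^kf(w)\frac{dw}{2\pi iw}$ to the determinant in \eqref{eq:qdef} reproduces a column for $k<j$ and produces $\widehat{D}_j$ for $k=j$, and the cofactor expansion for $\widehat{\chi}_j$ is exactly what the paper does.

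For part (i) you take a genuinely different route, but one with a gap. The paper proves \eqref{eq:Dhat} in one shot: by the same contour deformation that underlies Lemma \ref{le:ortoc}, applied to each individual planar moment $\int_\C w^j\overline{w}^kF(w)e^{-N|w|^2}d^2w$ rather than to the polynomial $p_j$, one gets the matrix factorization $M=TM'$ with $T$ upper triangular and diagonal entries $\pi\Gamma(1+\gamma/2+l)/N^{1+\gamma/2+l}$; taking determinants gives \eqref{eq:Dhat} for \emph{every} $j\ge 0$ with no non-vanishing hypothesis. Your argument instead proceeds by induction through the polynomials $p_j$ and $\chi_j$, and the inductive step requires $p_j$ and $\chi_j$ to exist, i.e. $D_{j-1}^{(N)}(F), D_j^{(N)}(F)\ne 0$. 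That is a hypothesis the statement of (i) does not carry, and one the paper actually relies on \emph{not} being there: in the proof of Lemma \ref{le:smnorm}, $D_{N+k-1}^{(N)}(F)\ne 0$ is \emph{deduced} from $\widehat{D}_{N+k-1}\ne 0$ via \eqref{eq:Dhat}, so the identity must be available before anything about $D_j^{(N)}$ is known. For complex $\gamma$ the weight $|w-x|^\gamma$ is not positive and some $D_k^{(N)}(F;\gamma)$ can vanish; at such an index your induction stalls and you cannot conclude $\widehat{D}_j=0$. You flag that you are ``propagating the non-vanishing of $\widehat{D}_{j-1}$,'' but that is not enough — the step also consumes $D_j^{(N)}(F)\ne 0$, which is strictly more than the inductive hypothesis provides. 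One could repair this with an analyticity-in-$\gamma$ argument (both sides of \eqref{eq:Dhat} are analytic and agree off a discrete set), but the direct factorization $M=TM'$ avoids the issue and is shorter. The computations inside your inductive step — identifying $p_j$ as $\chi_j/\widehat{D}_{j-1}$ times the contour-moment determinant, pairing against $w^{-j}$ to get $\widehat{D}_j/\widehat{D}_{j-1}=c_j/\chi_j^2$, and telescoping with $\chi_j^{-2}=D_j^{(N)}(F)/D_{j-1}^{(N)}(F)$ — are correct as far as they go.
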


\noindent Again, we offer a sketch of a proof in Appendix \ref{app:ortho}, as such a result isn't formulated precisely in this form in \cite{BBLM,LY}. We now turn to the Riemann-Hilbert problem and the differential identity related to $D_{N-1}(F)$.

\section{The Riemann-Hilbert problem and the differential identity}\label{sec:RHPdi}

We are now in a position to encode our polynomials into a Riemann-Hilbert problem in a similar way as in \cite{BBLM,LY} as well as state our differential identity. The proof of the differential identity is a modification of those appearing in \cite{DIK1,DIK2,Krasovsky}, but as the differential identity in our case is slightly more complicated, we offer details for the proof in Appendix \ref{app:RHP}.

We begin by defining the object that will satisfy a Riemann-Hilbert problem.

\begin{definition}\label{def:Ydef}
Let $\Sigma$ be as in Lemma \ref{le:ortoc} and assume that $D_{j-2}^{(N)}(F),D_{j-1}^{(N)}(F),D_j^{(N)}(F)\neq 0$. For $w\notin \Sigma$ and $j\geq 1$, let 

\begin{equation}\label{eq:Ydef}
Y(w)=Y_j(w)=\begin{pmatrix}
\frac{1}{\chi_j}p_j(w) & \frac{1}{\chi_j}\oint_\Sigma \frac{s^{-(j-1)}p_j(s)f(s)}{s-w}\frac{ds}{2\pi i s}\\
-\chi_{j-1}w^{j-1}q_{j-1}(w^{-1}) & -\chi_{j-1}\oint_\Sigma \frac{q_{j-1}(s^{-1})f(s)}{s-w}\frac{ds}{2\pi i s}
\end{pmatrix}.
\end{equation}
\end{definition}

\noindent Note that for each $j$, $Y_j$ also depends on $N$, $x$, $\gamma$, as well as the contour $\Sigma$ we have not yet fixed, but we suppress this in our notation. 

As originally noticed by Fokas, Its, and Kitaev \cite{FIK}, such an object indeed satisfies a Riemann-Hilbert problem:

\begin{lemma}\label{le:RHP}
Let $D_{j-2}^{(N)}(F),D_{j-1}^{(N)}(F),D_j^{(N)}(F)\neq 0$. Then $Y=Y_j$ is the unique solution to the following Riemann-Hilbert problem. 

\begin{itemize}[leftmargin=0.5cm]
\item $Y:\C\setminus \Sigma\to \C^{2\times 2}$ is analytic.
\item $Y$ has continuous boundary values on $\Sigma\setminus\lbrace x\rbrace$ $($denote by $Y_+$ the limit from the side of the origin and by $Y_-$ the limit from the side of infinity$)$ and they satisfy the following jump relation$:$ for $w\in \Sigma\setminus\lbrace x\rbrace$

\begin{equation}\label{eq:Yjump}
Y_+(w)=Y_-(w)\begin{pmatrix}
1 & w^{-j}f(w)\\
0 & 1
\end{pmatrix}.
\end{equation}
\item As $w\to \infty$, 

\begin{equation}\label{eq:Yasy}
Y(w)=(I+\mathcal{O}(w^{-1}))w^{j\sigma_3}=(I+\mathcal{O}(w^{-1}))\begin{pmatrix}
w^j & 0\\
0 & w^{-j}
\end{pmatrix}
\end{equation}

\noindent where $I$ is the $2\times 2$ identity matrix and $\mathcal{O}(w^{-1})$ denotes a $2\times 2$ matrix whose entries are bounded by $|w|^{-1}$ as $w\to \infty$.

\item As $w\to x$, 

\begin{equation}\label{eq:Yatxasy1}
Y(w)=\begin{pmatrix}
\mathcal{O}(1) & \mathcal{O}(1)+\mathcal{O}\left(|w-x|^{\mathrm{Re}(\gamma/2)}\right)\\
\mathcal{O}(1) & \mathcal{O}(1)+\mathcal{O}\left(|w-x|^{\mathrm{Re}(\gamma/2)}\right)
\end{pmatrix}.
\end{equation}

\end{itemize}
\end{lemma}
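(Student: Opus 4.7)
The plan is to verify the four conditions in turn for $Y=Y_j$ as defined in \eqref{eq:Ydef}, and then prove uniqueness by a standard Liouville-type argument. Analyticity off $\Sigma$ is immediate from the structure of the entries: each column 1 entry is a polynomial in $w$ (for the $(2,1)$ slot, $w^{j-1}q_{j-1}(w^{-1})$ is a polynomial of degree $j-1$), while each column 2 entry is a Cauchy-type integral $\oint_\Sigma \phi(s)/(s-w)\cdot ds/(2\pi i s)$, manifestly analytic in $w\notin\Sigma$. The jump relation \eqref{eq:Yjump} then follows from Sokhotski--Plemelj: the interior and exterior boundary values of such an integral differ by $\phi(w)/w$, and computing $\phi(w)/w$ for the two column 2 entries identifies it with the corresponding column 1 entry times $w^{-j}f(w)$; since column 1 has no jump (polynomial entries), this is exactly the triangular jump of \eqref{eq:Yjump} applied on the right.

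The asymptotic \eqref{eq:Yasy} at infinity is extracted by expanding $(s-w)^{-1}=-\sum_{k\ge 0}s^k w^{-k-1}$ for $|w|$ larger than the diameter of $\Sigma$ and using the contour orthogonality of the $p_j$ and $q_{j-1}$. For the $(1,2)$ entry, the coefficient of $w^{-k-1}$ is proportional to $\oint_\Sigma s^{k-(j-1)} p_j(s) f(s)\,ds/(2\pi is)$, which by \eqref{eq:ortoc} vanishes for $k=0,\ldots,j-1$, leaving $\mathcal{O}(w^{-j-1})$. For the $(2,2)$ entry, \eqref{eq:qporto} kills the terms $k=0,\ldots,j-2$ and gives coefficient $1/\chi_{j-1}$ at $k=j-1$, which combined with the prefactor $-\chi_{j-1}$ yields $w^{-j}+\mathcal{O}(w^{-j-1})$. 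Column 1 reproduces the polynomial part of $w^{j\sigma_3}$: $p_j(w)/\chi_j=w^j+\mathcal{O}(w^{j-1})$ by the definition of $\chi_j$, and $-\chi_{j-1}w^{j-1}q_{j-1}(w^{-1})$ has degree $j-1$, hence is $\mathcal{O}(w^{j-1})$. Assembled, these four estimates are precisely $(I+\mathcal{O}(w^{-1}))w^{j\sigma_3}$.

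The main obstacle is the local behavior \eqref{eq:Yatxasy1} at $x$, since $\Sigma$ passes through the branch point of $f$. Column 1 is trivially $\mathcal{O}(1)$, being polynomial. For column 2, one factorizes $f(s)=(s-x)^{\gamma/2}\tilde g(s)$ with $\tilde g$ smooth near $x$, splits each Cauchy integral into a small-arc contribution around $x$ plus a remainder that is uniformly bounded as $w\to x$, and then applies the classical local estimate for the Cauchy transform of a function of the form $h(s)(s-x)^{\gamma/2}$ on a smooth arc through $x$: off the contour, this is of the form $\mathcal{O}(1)+c\,(w-x)^{\gamma/2}$, with the last term generated by the jump of $(s-x)^{\gamma/2}$ across $\Sigma$ near $x$. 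Integrability of $(s-x)^{\gamma/2}$ along $\Sigma$ requires $\mathrm{Re}(\gamma/2)>-1$, which is exactly the standing hypothesis $\mathrm{Re}(\gamma)>-2$, and this uniform bound yields \eqref{eq:Yatxasy1}.

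Finally, uniqueness is routine. First, $\det Y$ has no jump across $\Sigma$ (the jump matrix is unipotent), is $\mathcal{O}(|w-x|^{\mathrm{Re}(\gamma/2)})$ near $x$ so has a removable singularity there (since $\mathrm{Re}(\gamma/2)>-1$), and tends to $1$ at infinity, hence $\det Y\equiv 1$. Given another solution $\tilde Y$, set $R=\tilde Y Y^{-1}$; the triangular jumps cancel, so $R$ is analytic off $\lbrace x\rbrace$. Using the adjugate formula, every entry of $R$ near $x$ is a sum of products of one column 1 entry (bounded) and one column 2 entry (of size $\mathcal{O}(|w-x|^{\mathrm{Re}(\gamma/2)})$), so $R(w)=\mathcal{O}(|w-x|^{\mathrm{Re}(\gamma/2)})$; this is $o(|w-x|^{-1})$ under $\mathrm{Re}(\gamma)>-2$, so the singularity at $x$ is removable. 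Since $R=I+\mathcal{O}(w^{-1})$ at infinity, Liouville gives $R\equiv I$.
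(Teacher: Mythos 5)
Your proof is correct and follows exactly the route the paper sketches (Sokhotski--Plemelj for the jump, the contour orthogonality relations \eqref{eq:ortoc} and \eqref{eq:qporto} for the normalization at infinity, the classical Muskhelishvili estimate for the local behavior at $x$, and a Liouville argument for uniqueness). One small imprecision: near $x$ the correct bound on $\det Y$ and on $R=\widetilde Y Y^{-1}$ is $\mathcal O(1)+\mathcal O(|w-x|^{\mathrm{Re}(\gamma/2)})$ rather than $\mathcal O(|w-x|^{\mathrm{Re}(\gamma/2)})$ (the latter is false for $\mathrm{Re}(\gamma)>0$), but since in every case this is $o(|w-x|^{-1})$ the removability of the singularity and hence the conclusion are unaffected.
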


\begin{remark}\label{rem:yatxasy}
As we will see later, actually $Y(w)$ converges to a finite limit as $w\to x$ from $\mathrm{Int}(\Sigma)$. This is important for our differential identity. Nevertheless, as $w\to x$ from $\mathrm{Ext}(\Sigma)$, $Y(w)$ remains unbounded if $\mathrm{Re}(\gamma)<0$.
\end{remark}

\noindent The proof is essentially standard -- uniqueness of a solution follows from Liouville's theorem (along with some standard arguments about a possible singularity at $x$ not being strong enough to be a pole due to the condition $\mathrm{Re}(\gamma)>-2$), the jump conditions from the Sokhotski-Plemelj theorem, and the asymptotic behavior at infinity from the orthogonality conditions \eqref{eq:ortoc} and \eqref{eq:qporto}. The continuity of the boundary values along with the asymptotic behavior at $x$ follow from basic properties of boundary values of the Cauchy transform -- see e.g. \cite[\S 19 and \S 33]{musk}.  We omit further details of the proof and refer to e.g. \cite{deift,kuijlaars}.

As we have seen in Lemma \ref{le:ortop}, if $D_{j}^{(N)}(F)\neq 0$ for $j\leq N-1$, one way to obtain asymptotics for $D_{N-1}(F)$ would be to obtain good asymptotics for $\chi_j$ for all $j\leq N-1$  (or $Y_j$ for all $j\leq N-1$), which would suggest that one would need to solve the above RHP for all $j\leq N-1$.  Due to a differential identity we now describe, it's enough for us to only solve the problem for $Y_N$ and $Y_{N+1}$.

\begin{lemma}\label{le:DI}
Let us write $D_{N-1}(F;\gamma)=D_{N-1}(F)$ $($where again $F(w)=|w-x|^\gamma)$ and assume that $D_{j}^{(N)}(F)\neq 0$ for $j\leq N+1$.  Let us also write $\kappa_j$ for the coefficient of $w^{j-1}$ in $p_j(w)$. Then for $\mathrm{Re}(\gamma)>-2$ 

\begin{align*}
& \partial_\gamma \log D_{N-1}(F;\gamma)\\
&=-\left(N+\frac{\gamma}{2}\right)\frac{\partial_\gamma\widehat{\chi}_N}{\widehat{\chi}_N}+\frac{\gamma}{2}x^N\frac{\partial q_N(x^{-1})}{\partial\gamma}\lim_{z \to x} \oint_\Sigma p_N(w)w^{-N+1}\frac{f(w)}{w-z}\frac{dw}{2\pi i w} -Nx\frac{p_{N+1}(0)}{\chi_{N+1}}\frac{\partial_\gamma q_N(0)}{\widehat{\chi}_N}\\
&\quad -N\frac{\partial_\gamma \chi_N}{\chi_N}-\frac{\gamma}{2}\frac{\partial p_N(x)}{\partial\gamma}x\lim_{z \to x} \oint_\Sigma \frac{q_N(w^{-1})f(w)}{w-z}\frac{dw}{2\pi i w}+Nx\left(\frac{\partial_\gamma \kappa_N}{\chi_N}-\frac{\partial_\gamma \chi_N}{\chi_N}\frac{\kappa_{N+1}}{\chi_{N+1}}\right)\\
&\quad +\partial_\gamma \sum_{j=0}^{N-1}\log \frac{\Gamma\left(\frac{\gamma}{2}+j+1\right)}{N^{\frac{\gamma}{2}}}
\end{align*}

\noindent where all the limits on the RHS should be interpreted as being taken along any sequence in $\mathrm{Int}(\Sigma) \setminus [0,x]$ tending to $x$.

\end{lemma}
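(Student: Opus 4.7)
The plan is to reduce the problem to computing $\partial_\gamma \log \widehat{D}_{N-1}$, express that as a single contour integral over $\Sigma$ involving the biorthogonal Christoffel--Darboux kernel, collapse the kernel to a closed form via a Szeg\H{o}-type identity, and finally evaluate the integral by branch-cut and residue analysis. First I would use the factorization \eqref{eq:Dhat} to write
\begin{equation*}
\log D_{N-1}(F;\gamma) = \log \widehat{D}_{N-1} + \sum_{k=0}^{N-1}\bigl[\log\pi + \log\Gamma\bigl(1+\tfrac{\gamma}{2}+k\bigr) - \bigl(1+\tfrac{\gamma}{2}+k\bigr)\log N\bigr].
\end{equation*}
Since $\partial_\gamma$ of the $k$-th bracket equals $\partial_\gamma \log[\Gamma(\gamma/2+k+1)/N^{\gamma/2}]$, this immediately produces the Gamma-sum term of the claimed identity and reduces the problem to computing $\partial_\gamma \log \widehat{D}_{N-1}$.

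Next, combining \eqref{eq:qporto} and \eqref{eq:chihat} gives the biorthogonal normalization $\oint_\Sigma p_k(w)q_k(w^{-1})f(w)\,\frac{dw}{2\pi i w}=1$. Differentiating in $\gamma$ and using \eqref{eq:ortoc} and its dual to identify $\oint_\Sigma (\partial_\gamma p_k)\, q_k\, f\,\tfrac{dw}{2\pi i w}=\partial_\gamma\log\chi_k$ and $\oint_\Sigma p_k\, (\partial_\gamma q_k)\, f\,\tfrac{dw}{2\pi i w}=\partial_\gamma\log\widehat{\chi}_k$ (only the leading coefficient survives by orthogonality) yields $\oint_\Sigma p_k q_k\,\partial_\gamma f\,\tfrac{dw}{2\pi i w} = -\partial_\gamma\log(\chi_k\widehat{\chi}_k)$. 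The relations \eqref{eq:Dhat}, \eqref{eq:DNchi} and \eqref{eq:chihat} imply $\widehat{D}_j/\widehat{D}_{j-1}=1/(\chi_j\widehat{\chi}_j)$, so summing over $k=0,\ldots,N-1$ yields the telescopic identity
\begin{equation*}
\partial_\gamma\log\widehat{D}_{N-1} = \oint_\Sigma K_N(w)\,\partial_\gamma f(w)\,\frac{dw}{2\pi i w}, \qquad K_N(w):=\sum_{k=0}^{N-1} p_k(w)\,q_k(w^{-1}),
\end{equation*}
where $\partial_\gamma f(w)=\tfrac12[\log(w-x)-\log w]f(w)$ by \eqref{eq:f}.

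At this point I would apply a Christoffel--Darboux-type identity for the biorthogonal pair $\{p_j,q_j\}$ (the contour analogue of the Szeg\H{o} identity on the unit circle) to express $K_N(w)$ in closed form in terms of $p_N,q_N,p_{N+1},q_{N+1}$ and the coefficients $\chi_N,\widehat{\chi}_N,\chi_{N+1},\kappa_N,\kappa_{N+1}$; this step is purely algebraic and uses \eqref{eq:qporto} at level $N$. After substitution, the combination $\log(w-x)-\log w = \log(1-x/w)$ is single-valued on $\C\setminus[0,x]$ and $\Sigma$ encircles the cut; deforming $\Sigma$ to a dogbone around $[0,x]$ splits the integral into a residue at $w=0$ and a cut integral with an endpoint contribution at $w=x$. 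The residue at $w=0$, together with the $w\to\infty$ expansion \eqref{eq:Yasy} of $Y_N$ and $Y_{N+1}$ (which controls $\chi_N,\widehat{\chi}_N,\kappa_N,\kappa_{N+1}$ and the values $p_{N+1}(0),q_N(0)$), produces the coefficients $-(N+\gamma/2)\partial_\gamma\widehat{\chi}_N/\widehat{\chi}_N$, $-N\partial_\gamma\chi_N/\chi_N$, the term $-Nx\,p_{N+1}(0)\partial_\gamma q_N(0)/(\chi_{N+1}\widehat{\chi}_N)$, and the $\kappa$-correction; the cut integral, after the CD factorization identifies the leading interior poles as coming from $\partial_\gamma p_N(x)$ and $\partial_\gamma q_N(x^{-1})$, collapses to the two limits $\lim_{z\to x}\oint \cdots\frac{dw}{w-z}$ in the statement. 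The main obstacle is the endpoint analysis at $w=x$: there $f$ has a Fisher--Hartwig-type singularity while the polynomials are controlled only by \eqref{eq:Yatxasy1}, so the limits $z\to x$ from $\mathrm{Int}(\Sigma)\setminus[0,x]$ must be justified using Remark \ref{rem:yatxasy}, and one has to carefully track how the CD factorization interacts with the residue and branch-cut contributions so that only $Y_N$ and $Y_{N+1}$ appear in the final identity.
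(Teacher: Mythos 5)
Your opening reduction is sound and is in fact equivalent to the paper's. Using \eqref{eq:Dhat} to separate out $\sum_j \partial_\gamma\log[\Gamma(\tfrac{\gamma}{2}+j+1)/N^{\gamma/2}]$, and then identifying $\oint (\partial_\gamma p_k)q_k f\,\tfrac{dw}{2\pi i w}=\partial_\gamma\log\chi_k$ and $\oint p_k(\partial_\gamma q_k)f\,\tfrac{dw}{2\pi i w}=\partial_\gamma\log\widehat\chi_k$ by orthogonality, is exactly what the paper does (it writes $\partial_\gamma\log D_{N-1}=-2\sum_j\partial_\gamma\chi_j/\chi_j$ and then trades $\partial_\gamma\chi_j/\chi_j$ for $\partial_\gamma\widehat\chi_j/\widehat\chi_j$ via \eqref{eq:chihat}). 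Your formulation via $\widehat D_{N-1}$ and the telescopic product $\widehat D_j/\widehat D_{j-1}=1/(\chi_j\widehat\chi_j)$ is an equivalent and perhaps slightly cleaner way to package the same computation. Up to here, you and the paper agree.

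The gap is in the remainder of the plan. You put the $\gamma$-derivative on $f$ and propose to evaluate $\oint K_N(w)\,\partial_\gamma f(w)\,\tfrac{dw}{2\pi i w}$ by a dogbone deformation. But after the Christoffel--Darboux collapse, $K_N(w)=-N p_Nq_N+w\bigl(q_N\partial_w p_N-p_N\partial_w q_N\bigr)$ contains only $p_N$, $q_N$ and their $w$-derivatives, \emph{not} their $\gamma$-derivatives. The integrand $K_N\partial_\gamma f$ therefore does not contain $\partial_\gamma p_N$ or $\partial_\gamma q_N$ anywhere, and no residue or branch-cut computation can conjure the factors $\partial_\gamma p_N(x)$, $\partial_\gamma q_N(x^{-1})$, $\partial_\gamma q_N(0)$, $\partial_\gamma \kappa_N$ that appear explicitly on the right-hand side of the lemma. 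Your assertion that the dogbone analysis ``identifies the leading interior poles as coming from $\partial_\gamma p_N(x)$ and $\partial_\gamma q_N(x^{-1})$'' is unsupported and, as stated, not correct: the endpoint contributions at $w=x$ from your integrand involve $p_N(x)$ and $q_N(x^{-1})$ themselves (times the jump of $\log\tfrac{w-x}{w}f$), not their $\gamma$-derivatives. The two expressions $\oint K_N\partial_\gamma f$ and $-\oint(\partial_\gamma K_N)f$ are of course equal by $\partial_\gamma\oint K_Nf=0$, but the paper's proof deliberately works with the latter so that $\partial_\gamma$ acts on the polynomials, and then integrates by parts in $w$ so that $\partial_w f$ produces the $\tfrac{1}{w-x}$ singularity and hence the regularized Cauchy transforms $\lim_{z\to x}\oint\cdots\tfrac{dw}{w-z}$. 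To arrive at the stated form from your version you would have to pass back through the product rule first, at which point you are following the paper's route. So the plan as written does not reach the stated identity; the missing ingredient is to put $\partial_\gamma$ on the CD kernel (not on $f$) and then integrate by parts in $w$, together with the careful $\epsilon$-indentation argument at $x$ that the paper carries out to control the Fisher--Hartwig endpoint (equations \eqref{eq:I1}--\eqref{eq:I1final} and their $I_2$ analogues).

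A secondary point: the Christoffel--Darboux identity you invoke is not the classical Szeg\H{o} one; here $\chi_j\neq\widehat\chi_j$ and the pair $(p_j,q_j)$ is biorthogonal for a contour pairing that is not Hermitian, so both the recurrences (Lemma \ref{le:rec}) and the CD identity (Lemma \ref{le:cd}) need to be re-derived. The paper does this; a complete write-up along your lines would have to as well.
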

\noindent Note that as $Y$ has no singularities on $(0,x)$, it would be natural to expect that one could take the sequence to be on this interval as well. Our proof does involve objects with branch cuts on $[0,x]$ and the proof would become slightly more involved if we wished to allow points on $[0,x]$ as well. For simplicity, we thus focus on sequences in $\mathrm{Int}(\Sigma)\setminus [0,x]$.

We give a proof of this differential identity in Appendix \ref{app:RHP}. One can easily check that all of the quantities here can be expressed in terms of $Y_N$ and $Y_{N+1}$ -- e.g. $\chi_N\widehat{\chi}_N=-Y_{N+1,21}(0)$, from which one can solve $\chi_N$. See Section \ref{sec:diint} for further details. We now move onto the asymptotic analysis of $Y_N$ and $Y_{N+1}$ by solving their RHPs. 

\section{Solving the Riemann-Hilbert problem for \texorpdfstring{$Y_N$}{YN} asymptotically}\label{sec:RHPNasy}

In this section we recall from \cite{LY} the asymptotic solution of the RHP for $Y_N$. In fact, we'll consider a minor generalization of their situation where we study the asymptotics of $Y_{N+k}$, where $k$ is a fixed integer -- for our differential identity, we only need $k=0$ and $k=1$. Again we offer details of the argument in Appendix \ref{app:RHPasy} since the question is slightly different from that in \cite{LY}. For intuition and further discussion concerning the approach, we refer to \cite{LY} and references therein.

As typical in this type of Riemann-Hilbert problems, using approximate problems which can be solved explicitly, we will transform this problem into a "small-norm" problem which can be solved asymptotically in terms of a Neumann-series. The solutions to the approximate problems are called parametrices, and we will need two of them: one close to the point $x$, and one far away from it. The one close to $x$ is called the local parametrix and the one far from it is the global parametrix. We begin with a transformation that normalizes our problem at infinity and enables ``opening lenses", then we recall from \cite{LY} the global and local parametrices relevant to us. Finally we will consider the solution of the small norm problem. Throughout this section, we will implicitly be assuming that the RHP for $Y$ is solvable, or that the relevant orthogonal polynomials exist, unless otherwise stated.

\subsection{Transforming the problem}

The goal of the transformation procedure is to have a RHP which is normalized at infinity (the sought function converges to the identity matrix as $w\to\infty$) and for which the jump matrix is close to the identity as $N\to\infty$. This allows formulating the problem in terms of a certain singular integral equation which can be solved in terms of a suitable Neumann-series. We begin by normalizing the function at infinity. To do this, let us write $\mathrm{Ext}(\Sigma)$ for the unbounded component of $\C\setminus \Sigma$ and $\mathrm{Int}(\Sigma)$ for the bounded one (recall that we still have not fixed $\Sigma$, but we will do this shortly), and define

\begin{equation}\label{eq:lgdef}
\ell=\log x-x^2\qquad \mathrm{and} \qquad g(w)=\begin{cases}
\log w, & w\in \mathrm{Ext}(\Sigma)\\
\ell+xw, & w\in \mathrm{Int}(\Sigma)
\end{cases}.
\end{equation}

\noindent As we are only giving a brief overview of the approach of \cite{LY}, we refer to \cite{BBLM,LY} for a discussion of why $\ell$ and $g$ are chosen so. Throughout this section, we will be working with $Y=Y_{N+k}$ and we will drop for now the index $N+k$ from our notation. We then define 

\begin{equation}\label{eq:Tdef}
T(w)=e^{-(N+k)\frac{\ell}{2}\sigma_3}Y(w)e^{-(N+k)g(w)\sigma_3}e^{(N+k)\frac{\ell}{2}\sigma_3}.
\end{equation}

\noindent Note that from the asymptotic behavior of $Y$ at infinity, namely \eqref{eq:Yasy}, and our choice of $g$ in $\mathrm{Ext}(\Sigma)$, we see that $T(w)=I+\mathcal{O}(w^{-1})$ as $w\to\infty$. 

Let us next fix the contour $\Sigma$. Let

\begin{align}\label{eq:Sigmadef}
\Sigma&=\lbrace w\in \C: \mathrm{Re}(xw+\ell-\log w)=0, \mathrm{Re}(w)\leq x\rbrace\\
\notag &=\lbrace u+iv\in \C: u^2+v^2=x^2e^{2x(u-x)}, u\leq x\rbrace. 
\end{align}

\noindent The point of choosing our jump contour to be this one will be evident shortly as we'll perform another transformation which will result in a jump matrix close to the identity when off of $\Sigma\cup[0,x]$.  Before going into our next transformation, we point out the following fact (see also \cite[Lemma 4]{LY}).

\begin{lemma}\label{le:Sigmasign}
For each $x\in(0,1)$, $\Sigma$ is a smooth, simple closed loop inside the unit disk. It encircles $[0,x]$ $($passing through $x$, but not other points$)$, and 

\begin{align*}
\mathrm{Re}(xw+\ell-\log w)\begin{cases}
>0, & w\in\mathrm{Int}(\Sigma)\\
<0, & w\in \lbrace s\in \mathrm{Ext}(\Sigma): |s|\leq 1\rbrace
\end{cases},
\end{align*}

\noindent moreover $\sup_{|w|=1}\mathrm{Re}(xw+\ell-\log w)<0$ for all $x\in (0,1)$.

\end{lemma}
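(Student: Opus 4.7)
My plan is to work with the single-valued harmonic function $\phi:\C\setminus\{0\}\to\R$ defined by
\[
\phi(w)=\mathrm{Re}(xw+\ell-\log w)=x\,\mathrm{Re}(w)+\log x-x^2-\log|w|,
\]
so that $\Sigma$ is the part of the level set $\{\phi=0\}$ with $\mathrm{Re}(w)\leq x$. The three assertions of the lemma---that $\Sigma$ is a smooth simple closed loop inside $\D$ encircling $[0,x]$, the sign of $\phi$ off $\Sigma$, and the strict negativity of $\sup_{|w|=1}\phi$---will then all follow from a short analysis of $\phi$ combined with the maximum principle for harmonic functions.

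For the topology of $\Sigma$ I would use the Cartesian equation $u^2+v^2=x^2e^{2x(u-x)}$ and factor
\[
g(u):=x^2e^{2x(u-x)}-u^2=H(u)\,F(u),\qquad H(u):=xe^{x(u-x)}-u,\quad F(u):=xe^{x(u-x)}+u.
\]
A short calculation shows that $H$ is strictly decreasing on $(-\infty,x]$ with $H(x)=0$, hence $H>0$ on $(-\infty,x)$, while $F$ is strictly increasing with $F(0)>0$, so it has a unique zero at some $u_0<0$. Thus $g>0$ on $(u_0,x)$ and vanishes with nonzero derivative at both endpoints, and the gradient of $G(u,v):=u^2+v^2-x^2e^{2x(u-x)}$ is nonzero at $(x,0)$ and $(u_0,0)$. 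The implicit function theorem then identifies $\Sigma$ as the simple closed loop consisting of the two graphs $v=\pm\sqrt{g(u)}$ over $[u_0,x]$, joined smoothly at the endpoints; this encircles $[u_0,x]\supset[0,x]$ and touches $[0,x]$ only at $x$. Containment in $\D$ is then immediate: on $\Sigma$, $|w|^2=x^2e^{2x(u-x)}\leq x^2<1$ since $u\leq x$, with equality only at $w=x$.

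For the sign statement I would appeal to the maximum principle. On $|w|=1$, $\phi(w)=xu-x^2+\log x$ with $u\in[-1,1]$, so $\sup_{|w|=1}\phi=\psi(x):=\log x+x-x^2$. From $\psi(1)=0$, $\psi'(1)=1+1-2=0$ and $\psi''(x)=-1/x^2-2<0$, the derivative $\psi'$ is strictly decreasing with $\psi'(1)=0$, hence $\psi'>0$ on $(0,1)$ and $\psi$ is strictly increasing there, so $\psi(x)<\psi(1)=0$ for $x\in(0,1)$; this is the last claim of the lemma. Next, $\phi$ is harmonic on $\mathrm{Int}(\Sigma)\setminus\{0\}$, vanishes on $\Sigma$, and $\phi(w)\to+\infty$ as $w\to 0$; applying the minimum principle on $\mathrm{Int}(\Sigma)\setminus\overline{B(0,\varepsilon)}$ and letting $\varepsilon\downarrow 0$, together with the strict minimum principle, yields $\phi>0$ on $\mathrm{Int}(\Sigma)\setminus\{0\}$. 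For the exterior, the region $R:=\{|w|\leq 1\}\cap\mathrm{Ext}(\Sigma)$ is connected because $\Sigma\subset B(0,x)$, and its boundary decomposes into $\Sigma$ (where $\phi=0$) and $\{|w|=1\}$ (where $\phi\leq\psi(x)<0$), so the maximum principle forces $\phi<0$ strictly on the interior of $R$.

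The main obstacle I anticipate is the topological step: one has to verify that the real-analytic level set $\{\phi=0\}\cap\{\mathrm{Re}(w)\leq x\}$ really is a single simple smooth closed loop, with no stray components or singular points. The factorization $g=H\cdot F$ reduces this to the two scalar sign analyses sketched above, so in practice it is bookkeeping rather than genuine difficulty; the remaining harmonic-function arguments are routine applications of the maximum principle, and the bound on $|w|=1$ is a one-variable calculus exercise.
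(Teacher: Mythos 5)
Your proof is correct and follows essentially the same route as the paper: parameterize $\Sigma$ as two graphs $v=\pm\sqrt{g(u)}$ over $[u_0,x]$, verify the sign of $g$ and simple zeros at the endpoints, then use harmonicity of $\phi$ to pin down its sign on either side of $\Sigma$. The only cosmetic differences are that you make the root analysis explicit via the factorization $g = H\cdot F$ (where the paper leaves it as ``one can easily check''), and you invoke the maximum/minimum principle directly where the paper argues via the absence of critical points of $\phi$ in the closed unit disk (the unique critical point being $w=1/x>1$) — for a harmonic function these are equivalent formulations of the same idea.
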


\noindent Note that in particular, $\Sigma$ satisfies the conditions of Lemma \ref{le:ortoc}. The proof is given in Appendix \ref{app:RHPasy}.

Our next transformation allows us to perform a Deift-Zhou non-linear steepest descent-type argument by opening lenses. Our lens will now essentially be the unit circle combined with the interval $[0,x]$. We define

\begin{align}\label{eq:Sdef}
S(w)=\begin{cases}
T(w), & |w|>1\\
T(w)\begin{pmatrix}
1 & 0\\
w^{\frac{\gamma}{2}}(w-x)^{-\frac{\gamma}{2}}e^{-kxw} e^{(N+k)(xw+\ell-\log w)} & 1
\end{pmatrix}, & w\in \lbrace s\in \mathrm{Ext}(\Sigma): |s|<1\rbrace\\
T(w)\begin{pmatrix}
1 & 0\\
-w^{\frac{\gamma}{2}}(w-x)^{-\frac{\gamma}{2}}e^{-kxw}e^{-(N+k)(xw+\ell-\log w)} & 1
\end{pmatrix},  & w\in \mathrm{Int}(\Sigma)\setminus [0,x]
\end{cases},
\end{align}

\noindent where as before, the roots are according to the principal branch.

\begin{figure}[h!]
\begin{center}
\begin{tikzpicture}[scale = 2.5, every node/.style={scale=1}]
\draw[->] (-2,0) -- (2, 0) node[right] {$\mathrm{Re}(w)$};
\draw[->] (0, -1.2) -- (0, 1.2) node[above] {$\mathrm{Im}(w)$};

\draw[thick, ->] (1, 0) to [out = 90, in = 0] (0,1) node[above right] {$-$} node [below right]{$+$};
\draw[thick, ->] (-1, 0) to [out = -90, in = -180] (0, -1) node[above left] {$+$} node [below left] {$-$};
\draw[thick] (0,0) node[below left] {$0$} circle [radius = 1];

\node at (1.15, 0.8) {$\{|w| = 1\}$};

\draw[thick, ->] (0,0) -- (0.7, 0) (0,0) -- (0.4, 0) node[above]{$+$} node[below]{$-$};

\draw[thick, <-, domain= 0.2:0.7, samples=40] plot ({\x}, {sqrt(0.49*exp(1.4*(\x-0.7))-\x*\x)});
\draw[thick, domain=-0.338392259:0.7, samples=40] plot ({\x}, {sqrt(0.49*exp(1.4*(\x-0.7))-\x*\x)});
\draw[thick, ->, domain= -0.338392259:0.2, samples=40] plot ({\x}, {-sqrt(0.49*exp(1.4*(\x-0.7))-\x*\x)});
\draw[thick, domain=-0.338392259:0.7, samples=40] plot ({\x}, {-sqrt(0.49*exp(1.4*(\x-0.7))-\x*\x)});
\node at (0.25, 0.55) {$-$};
\node at (0.25, 0.35) {$+$};
\node at (0.2, -0.55) {$-$};
\node at (0.2, -0.35) {$+$};
\node at (-0.35, 0.35) {$\Sigma$};

\draw[fill = black] (0.7, 0) circle [radius = 0.02] node[below right]{$x$};
\draw[fill = black] (0, 0) circle [radius = 0.02];
\end{tikzpicture}
\end{center}
\caption{\label{fig:lenses}$S$-RHP and the opening of lenses.}
\end{figure}
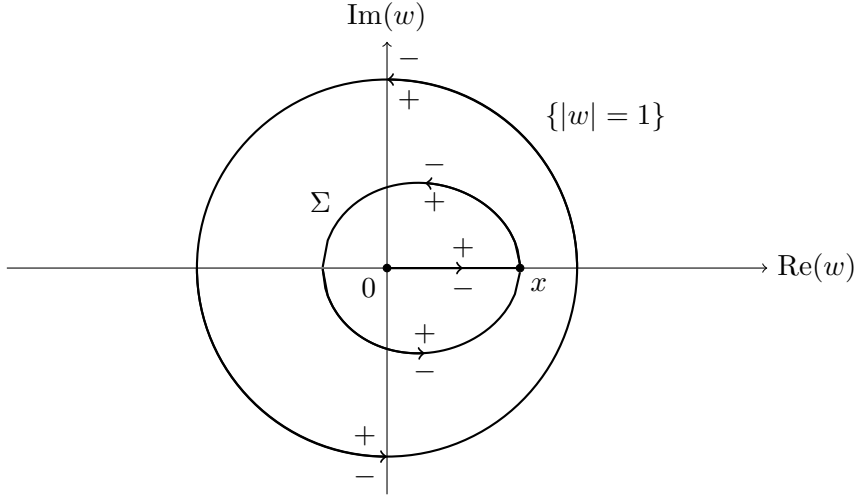

We now describe the RHP satisfied by this function. 

\begin{lemma}\label{le:SRHP}
Let $C=[0,x]\cup \Sigma\cup \lbrace w\in \C:|w|=1\rbrace$. Orient $[0,x]$ from $0$ to $x$ so that the $+$ side of the interval is the upper half plane. Orient the unit circle so that the inside of the circle is the $+$ side of the contour, and orient $\Sigma$ in the counter-clockwise direction $($i.e. we let the $+$ side of the contour be the side of the origin and the $-$ side of the contour be the side of infinity$)$. Then $S$ satisfies the following Riemann-Hilbert problem.

\begin{itemize}[leftmargin=0.5cm]
\item $S:\C\setminus C\to \C^{2\times 2}$ is analytic.
\item $S$ has continuous boundary values on $C\setminus \lbrace x,0\rbrace$, and these satisfy the following jump conditions:

\begin{equation}\label{eq:Sjump1}
S_+(w)=S_-(w)\begin{pmatrix}
1 & 0\\
w^{\frac{\gamma}{2}}(w-x)^{-\frac{\gamma}{2}}e^{-kxw}e^{(N+k)(xw+\ell-\log w)} & 1
\end{pmatrix}, \qquad |w|=1,
\end{equation}

\begin{equation}\label{eq:Sjump2}
S_+(w)=S_-(w)\begin{pmatrix}
0 & e^{kxw}(w-x)^{\frac{\gamma}{2}} w^{-\frac{\gamma}{2}}\\
-e^{-kxw}(w-x)^{-\frac{\gamma}{2}}w^{\frac{\gamma}{2}} & 0
\end{pmatrix}, \qquad w\in \Sigma\setminus \lbrace x\rbrace, 
\end{equation}

\noindent and 

\begin{equation}\label{eq:Sjump3}
S_+(w)=S_-(w)\begin{pmatrix}
1 & 0\\
2i \sin \frac{\pi \gamma}{2}|w|^{\gamma/2}|w-x|^{-\frac{\gamma}{2}} e^{-kxw}e^{-(N+k)(xw+\ell-\log w)} & 1
\end{pmatrix}, \qquad w\in (0,x).
\end{equation}
\item As $w\to 0$, $S(w)$ is bounded $($actually $S(0)$ exists$)$ and as $w\to x$ $($off of $C)$,

\begin{equation}\label{eq:Sasy}
S(w)=\begin{pmatrix}
\mathcal{O}(1)+\mathcal{O}\left(|w-x|^{-\frac{\mathrm{Re}(\gamma)}{2}}\right) & \mathcal{O}(1)+\mathcal{O}\left(|w-x|^{\frac{\mathrm{Re}(\gamma)}{2}}\right)\\
\mathcal{O}(1)+\mathcal{O}\left(|w-x|^{-\frac{\mathrm{Re}(\gamma)}{2}}\right) & \mathcal{O}(1)+\mathcal{O}\left(|w-x|^{\frac{\mathrm{Re}(\gamma)}{2}}\right) 
\end{pmatrix}.
\end{equation}

\item As $w\to \infty$, $S(w)=I+\mathcal{O}(w^{-1})$.
\end{itemize}
\end{lemma}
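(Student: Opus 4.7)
The plan is to verify each claimed property of $S$ by unpacking the piecewise definition \eqref{eq:Sdef} and combining it with the RHP for $Y$ from Lemma \ref{le:RHP} and the piecewise definition of $g$ in \eqref{eq:lgdef}. Analyticity of $S$ in each of the three open regions $\lbrace |w|>1\rbrace$, $\lbrace |w|<1\rbrace\cap\mathrm{Ext}(\Sigma)$, and $\mathrm{Int}(\Sigma)\setminus[0,x]$ is automatic: $T$ is analytic in $\C\setminus\Sigma$ (since $Y$ is, and $g$ is analytic in each of $\mathrm{Ext}(\Sigma)$ and $\mathrm{Int}(\Sigma)$ -- the principal branch of $\log w$ in the exterior causes no ambiguity because $N+k$ is an integer and so $e^{(N+k)\log w\sigma_3}=w^{(N+k)\sigma_3}$ is single-valued), while the lens factors $M_{\mathrm{out}}$ and $M_{\mathrm{in}}$ have their only branch cut along $[0,x]$, which is excluded from the interior piece. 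The normalization at infinity is then immediate: for $|w|>1$, $S(w)=T(w)$, and \eqref{eq:Yasy} together with $g(w)=\log w$ on $\mathrm{Ext}(\Sigma)$ gives $T(w)=e^{-(N+k)\ell\sigma_3/2}(I+\mathcal{O}(w^{-1}))e^{(N+k)\ell\sigma_3/2}=I+\mathcal{O}(w^{-1})$, since conjugation by a constant diagonal matrix preserves the order.

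For the three jumps, I would proceed arc by arc. Across the unit circle, $T$ is continuous (both sides are in $\mathrm{Ext}(\Sigma)$) so only the lens factor changes, immediately yielding \eqref{eq:Sjump1}. Across $(0,x)$, $T$ is analytic (both sides lie in $\mathrm{Int}(\Sigma)$), so the jump comes entirely from the branch cut of $w^{\gamma/2}(w-x)^{-\gamma/2}$ in $M_{\mathrm{in}}$; with principal branches and the upper half plane as the $+$ side, the boundary values are $|w|^{\gamma/2}|w-x|^{-\gamma/2}e^{\mp i\pi\gamma/2}$, so computing $(M_{\mathrm{in}}|_-)^{-1}M_{\mathrm{in}}|_+$ directly produces the $(2,1)$ entry $2i\sin(\pi\gamma/2)|w|^{\gamma/2}|w-x|^{-\gamma/2}e^{-kxw}e^{-(N+k)(xw+\ell-\log w)}$ of \eqref{eq:Sjump3}. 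The jump across $\Sigma$ is the key step: combining \eqref{eq:Yjump} with the $g$-jump $g_+-g_-=\ell+xw-\log w$ and sandwiching by the diagonal factors in \eqref{eq:Tdef}, the jump of $T$ becomes upper triangular with diagonal entries $\lambda^{\pm 1}$, where $\lambda=w^{N+k}e^{-(N+k)(\ell+xw)}$ (of unit modulus on $\Sigma$ by Lemma \ref{le:Sigmasign}), and $(1,2)$ entry $e^{(N+k)xw}f(w)=e^{kxw}w^{-\gamma/2}(w-x)^{\gamma/2}=1/u$, where $u=e^{-kxw}w^{\gamma/2}(w-x)^{-\gamma/2}$. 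This cancellation between the $e^{-Nxw}$ inside $f$ and the $e^{(N+k)xw}$ picked up through the sandwiching is precisely what the piecewise choice of $g$ is designed to produce. Multiplying $M_{\mathrm{out}}^{-1}$ on the left and $M_{\mathrm{in}}$ on the right, the diagonal entries of the product cancel via $\lambda-(1/u)(u\lambda)=0$ (from the product $J_\Sigma M_{\mathrm{in}}$) and $-(u/\lambda)(1/u)+1/\lambda=0$ (from the subsequent multiplication by $M_{\mathrm{out}}^{-1}$), leaving the anti-diagonal jump of \eqref{eq:Sjump2}.

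The endpoint behavior at $x$ follows from \eqref{eq:Yatxasy1}: the conjugations in $T$ are bounded near $x$, so $T$ inherits the same column-wise estimates, and multiplying by $M_{\mathrm{in}}$ introduces the $(w-x)^{-\gamma/2}$ factor in its $(2,1)$ entry, producing the mixed-order expression \eqref{eq:Sasy}. At $w=0$, $Y$ is analytic in a neighborhood of $0$ inside $\Sigma$ and $g$ is entire there, so $T(0)$ exists; the only potentially singular entry of $M_{\mathrm{in}}$ at $0$ is the $(2,1)$ position, which behaves like $w^{\gamma/2}\cdot e^{(N+k)\log w}=w^{N+k+\gamma/2}$ (the $w^{N+k}$ coming from $e^{-(N+k)(\ell+xw-\log w)}$), and this vanishes as $w\to 0$ for $\mathrm{Re}(\gamma)>-2$ and any $N+k\geq 1$. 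Hence $M_{\mathrm{in}}(0)=I$ and $S(0)=T(0)$ is well-defined. The only non-routine step is the jump calculation on $\Sigma$, where the algebraic collapse of the triangular $T$-jump into the anti-diagonal $S$-jump must be verified explicitly by tracking the $\sigma_3$-conjugations; everything else is bookkeeping once the definitions \eqref{eq:Tdef} and \eqref{eq:Sdef} are unpacked.
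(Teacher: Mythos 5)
Your proposal is correct and follows essentially the same route as the paper's proof in Appendix \ref{app:RHPasy}: unpack the piecewise definition \eqref{eq:Sdef}, verify analyticity and normalization directly, compute each jump arc by arc by sandwiching the $Y$-jump with the $g$-conjugations and the lens factors, and use the vanishing of the lens factor's $(2,1)$ entry at $0$ for boundedness there. Your explicit tracking of the $\lambda$-$u$ cancellations on $\Sigma$ and the computation of the $(0,x)$ jump as a branch-cut mismatch are simply more spelled-out versions of the paper's one-line matrix multiplication and its remark that ``the only term contributing to the branch cut is $(w-x)^{-\gamma/2}$.''
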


\noindent The proof is in Appendix \ref{app:RHPasy}.

Our next task is to find the approximate solutions. The first one corresponds to focusing on a problem where we only consider the jump condition \eqref{eq:Sjump2} (the global parametrix), while the second one approximates the RHP close to the point $x$ (the local parametrix) as well as approximately matches the global solution on the boundary of a small neighborhood of the point $x$.

\subsection{The global parametrix}\label{sec:global}

Here we first look for a function $P^{(\infty)}:\C\setminus \Sigma\to \C^{2\times 2}$ which satisfies the jump condition \eqref{eq:Sjump2} and is normalized at infinity. We simply mention that one can easily check that the function 

\begin{equation}\label{eq:global}
P^{(\infty)}(w)=\begin{cases}
\begin{pmatrix}
w^{\frac{\gamma}{2}}(w-x)^{-\frac{\gamma}{2}}  & 0\\
0 & w^{-\frac{\gamma}{2}}(w-x)^{\frac{\gamma}{2}}
\end{pmatrix}, & w\in \mathrm{Ext}(\Sigma)\\
\begin{pmatrix}
0 & e^{kxw}\\
-e^{-kxw} & 0
\end{pmatrix}, & w\in \mathrm{Int}(\Sigma)
\end{cases}
\end{equation}

\noindent satisfies these conditions.

If we were to take this as our global parametrix, we would obtain a small norm problem for $\mathrm{Re}(\gamma)<2$ and it could be solved as an expansion in $N^{\frac{\mathrm{Re}(\gamma)}{2}-1}$, which would be sufficient for small enough $\gamma$, but as we are interested also in larger $\mathrm{Re}(\gamma)$, this parametrix is not good enough for us. It turns out that for our differential identity, we'll need to adjust the global parametrix depending on the size of $\gamma$, and in fact we need to define a sequence of global parametrices. The way we'll shortly define this sequence is as

$$
\widehat{P}^{(\infty,r)}(w)=\begin{pmatrix}
1 & h_r(w;\gamma)\\
0 & 1
\end{pmatrix}P^{(\infty)}(w),
$$

\noindent where $h_r$ is a Laurent polynomial of the form $\sum_{j=0}^r h_{j,r}(\gamma) (w-x)^{-j-1}$, with $h_{j,r}$ being some suitable coefficients that need to be chosen to ensure that the local parametrix we construct in the next section has the correct behavior at $x$. This will eventually result in a small norm problem which will yield an expansion in $N^{\frac{\gamma}{2}-r-1}$. Note that for any Laurent polynomial $h_r$, $\widehat{P}^{(\infty,r)}$ will have the same jump structure as $P^{(\infty)}$ -- namely it satisfies \eqref{eq:Sjump2}, though the behavior at $x$ will be different.

We will now introduce some notation to be able to make the relevant definition of $h_r$ and in the following section, where we discuss the local parametrix, it will hopefully become more apparent why such a definition is required.

Consider $\zeta:\C\setminus (-\infty,0]\to \C$,

\begin{equation}\label{eq:zeta}
\zeta(w)=-(N+k)(xw-\log w+\ell),
\end{equation}

\noindent where the branch is the principal one. We can now define our functions $h_r$.

\begin{definition}\label{def:h}
For $r\geq 0$ define $h_r(w;\gamma)=\sum_{j=0}^{r}h_{j,r}(\gamma)(w-x)^{-j-1}$ to be the unique function of such form that 

\begin{equation}\label{eq:hcond}
w\mapsto h_r(w;\gamma)-e^{kxw}w^{\gamma/2}(w-x)^{-\gamma/2}\zeta(w)^{\gamma/2}\sum_{j=0}^{r}\frac{1}{\Gamma(\frac{\gamma}{2}-j)}\zeta(w)^{-j-1}
\end{equation}

\noindent is analytic in some $(N$-independent$)$ neighborhood of $x$. Above, the branch of the root is again the principal one. Also define for $w\notin\Sigma$, 

\begin{equation}\label{eq:Phat}
\widehat{P}^{(\infty,r)}(w)=\begin{pmatrix}
1 & h_r(w;\gamma)\\
0 & 1
\end{pmatrix}P^{(\infty)}(w).
\end{equation}

\end{definition}

Note that this definition of $h_r$ makes sense: as $\zeta$ has an order one zero at $x$, $w\mapsto w^{\gamma/2}(w-x)^{-\gamma/2}\zeta(w)^{\gamma/2}$ is analytic in some ($N$-independent) neighborhood of $x$, so 

$$
e^{kxw}w^{\gamma/2}(w-x)^{-\gamma/2}\zeta(w)^{\gamma/2}\sum_{j=0}^{r}\frac{1}{\Gamma(\frac{\gamma}{2}-j)}\zeta(w)^{-j-1}
$$

\noindent is a sum of a degree (at most) $r+1$ Laurent polynomial and an analytic function, so by subtracting the poles, one is left with an analytic function. 

We will also need some simple properties of the function $h_r$ and we record them in the following lemma.

\begin{lemma}\label{le:h}
The functions $\gamma \mapsto h_r(0,\gamma)$ and 

$$
\gamma\mapsto\lim_{w\to x} \left[h_r(w,\gamma)-e^{kxw}w^{\gamma/2}(w-x)^{-\gamma/2}\zeta(w)^{\gamma/2}\sum_{j=0}^{r}\frac{1}{\Gamma(\frac{\gamma}{2}-j)}\zeta(w)^{-j-1}\right]
$$

\noindent are analytic functions of $\gamma$ in $\lbrace \gamma\in \C:\mathrm{Re}(\gamma)>-2\rbrace$. Moreover, we have the bounds 

\begin{equation}\label{eq:hbound1}
h_r(0,\gamma)=\mathcal{O}\left(N^{\frac{\mathrm{Re}(\gamma)}{2}-1}\right),
\end{equation}

\begin{equation}\label{eq:hbound2}
\partial_\gamma h_r(0,\gamma)=\mathcal{O}\left(\log N N^{\frac{\mathrm{Re}(\gamma)}{2}-1}\right),
\end{equation}

\begin{equation}\label{eq:hbound3}
\lim_{w\to x} \left[h_r(w,\gamma)-e^{kxw}w^{\gamma/2}(w-x)^{-\gamma/2}\zeta(w)^{\gamma/2}\sum_{j=0}^{r}\frac{1}{\Gamma(\frac{\gamma}{2}-j)}\zeta(w)^{-j-1}\right]=\mathcal{O}\left(N^{\frac{\mathrm{Re}(\gamma)}{2}-1}\right),
\end{equation}

\begin{equation}\label{eq:hbound4}
\partial_\gamma \lim_{w\to x} \left[h_r(w,\gamma)-e^{kxw}w^{\gamma/2}(w-x)^{-\gamma/2}\zeta(w)^{\gamma/2}\sum_{j=0}^{r}\frac{1}{\Gamma(\frac{\gamma}{2}-j)}\zeta(w)^{-j-1}\right]=\mathcal{O}\left(\log N N^{\frac{\mathrm{Re}(\gamma)}{2}-1}\right),
\end{equation}

\noindent where the implied constants in the errors are uniform in $\gamma$ in compact subsets of $\lbrace \gamma\in \C:\mathrm{Re}(\gamma)>-2\rbrace$ as well as uniform in $x$ in compact subsets of $(0,1)$.

\end{lemma}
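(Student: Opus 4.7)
The plan is to make all $N$-dependence of $h_r$ completely explicit by isolating the trivial $(N+k)$ prefactor in $\zeta$. First I would write $\zeta(w)=(N+k)\eta(w)$ with $\eta(w)=\log w-xw-\ell$ being $N$-free, and note that $\eta(x)=0$ while $\eta'(x)=(1-x^2)/x>0$, producing a factorization $\eta(w)=(w-x)\widetilde\eta(w)$ with $\widetilde\eta$ holomorphic and strictly positive in an $N$-independent neighborhood of $x$. Using principal branches, one then has $(w-x)^{-\gamma/2}\zeta(w)^{\gamma/2}=(N+k)^{\gamma/2}\widetilde\eta(w)^{\gamma/2}$ on this neighborhood, so that the function in \eqref{eq:hcond} takes the form
\begin{align*}
H(w;\gamma):=\sum_{j=0}^{r}\frac{(N+k)^{\gamma/2-j-1}}{\Gamma(\gamma/2-j)}\,\phi_j(w;\gamma)(w-x)^{-j-1},\qquad \phi_j(w;\gamma):=e^{kxw}w^{\gamma/2}\widetilde\eta(w)^{\gamma/2-j-1},
\end{align*}
where each $\phi_j$ is analytic in $w$ in a fixed, $N$-independent neighborhood of $x$, entire in $\gamma$, and smooth in $x\in(0,1)$.

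Next I would extract the principal part at $w=x$ from the Taylor expansion of each $\phi_j$, obtaining $h_r(w;\gamma)=\sum_{j=0}^{r}\frac{(N+k)^{\gamma/2-j-1}}{\Gamma(\gamma/2-j)}\sum_{l=0}^{j}\frac{\phi_j^{(l)}(x;\gamma)}{l!}(w-x)^{l-j-1}$. From this one reads off
\begin{align*}
h_r(0;\gamma)&=\sum_{j=0}^{r}\frac{(N+k)^{\gamma/2-j-1}}{\Gamma(\gamma/2-j)}\sum_{l=0}^{j}\frac{\phi_j^{(l)}(x;\gamma)}{l!}(-x)^{l-j-1},\\
\lim_{w\to x}\bigl[h_r(w;\gamma)-H(w;\gamma)\bigr]&=-\sum_{j=0}^{r}\frac{(N+k)^{\gamma/2-j-1}}{\Gamma(\gamma/2-j)}\,\frac{\phi_j^{(j+1)}(x;\gamma)}{(j+1)!};
\end{align*}
the second identity follows because $h_r-H$ equals the negative of the regular part of $H$ at $x$, whose value there picks up exactly the $l=j+1$ Taylor term. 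The analyticity claim on $\{\mathrm{Re}(\gamma)>-2\}$ (in fact on all of $\C$) is then immediate: each $1/\Gamma(\gamma/2-j)$ is entire, and each $\phi_j^{(l)}(x;\gamma)$ is a polynomial in $\log\widetilde\eta(x)$, $\log x$, derivatives of $\widetilde\eta$ at $x$, etc., with entire dependence on $\gamma$.

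For the size bounds, one uses $|(N+k)^{\gamma/2-j-1}|=(N+k)^{\mathrm{Re}(\gamma)/2-j-1}\le C\,N^{\mathrm{Re}(\gamma)/2-1}$ for $j\geq 0$, uniformly in $\gamma$ on any compact subset of $\{\mathrm{Re}(\gamma)>-2\}$; combined with the uniform boundedness of the coefficients $\phi_j^{(l)}(x;\gamma)/\Gamma(\gamma/2-j)$ on compact subsets of $\{\mathrm{Re}(\gamma)>-2\}\times(0,1)$, this yields \eqref{eq:hbound1} and \eqref{eq:hbound3}. For the derivative estimates, the only source of $N$-growth under $\partial_\gamma$ is $\partial_\gamma(N+k)^{\gamma/2-j-1}=\tfrac12\log(N+k)\cdot(N+k)^{\gamma/2-j-1}$, which produces the extra $\log N$ factor in \eqref{eq:hbound2} and \eqref{eq:hbound4}; all other $\gamma$-derivatives hit entire functions and remain uniformly bounded on compact sets. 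I do not anticipate a genuine obstacle here: the whole argument is bookkeeping once the $N$-free factorization $\zeta=(N+k)\eta$ is exploited. The only delicate point is to confirm that $\widetilde\eta$ and all Taylor derivatives $\phi_j^{(l)}(x;\gamma)$ depend on $(k,x,\gamma)$ only and not on $N$, which is immediate from their definitions.
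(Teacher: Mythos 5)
Your proof is correct and follows essentially the same route as the paper: both extract the $N$-dependence of the Laurent coefficients as $(N+k)^{\gamma/2-j-1}$ times $N$-free quantities that are analytic in $\gamma$ and continuous in $x$. Your factorization $\zeta(w)=(N+k)(w-x)\widetilde\eta(w)$, giving $(w-x)^{-\gamma/2}\zeta(w)^{\gamma/2}=(N+k)^{\gamma/2}\widetilde\eta(w)^{\gamma/2}$, makes the scalar nature of the $N$-dependence immediate and avoids the paper's appeal to the series expansion and Bell-polynomial bookkeeping, so the presentation is a bit cleaner, but the underlying idea and the resulting estimates (including the $\log N$ from $\partial_\gamma(N+k)^{\gamma/2-j-1}$) coincide with the paper's.
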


\begin{proof}

Consider first the series expansion of $\zeta(w)$ around $w=x$:

\begin{align}\label{eq:zetaseries}
\zeta(w)&=-(N+k)\left(x(w-x)-\log\left(1+\frac{w-x}{x}\right)\right)\\
\notag &=-(N+k)\left(x(w-x)+\sum_{j=1}^\infty \frac{(-1)^j}{j}\left(\frac{w-x}{x}\right)^j\right)\\
\notag &=(N+k)(w-x)\frac{1-x^2}{x}\left(1+\sum_{j=1}^\infty \frac{(-1)^j}{j+1}\frac{1}{1-x^2}\left(\frac{w-x}{x}\right)^j\right).
\end{align}

\noindent From this, we note that the Taylor coefficients (when expanding around $w=x$) of 

\begin{align*}
w\mapsto e^{kxw}w^{\gamma/2}(w-x)^{-\gamma/2}\zeta(w)^{\gamma/2}
\end{align*}

\noindent can be written explicitly (e.g. in terms of Bell polynomials) and they are of the form

$$
(1-x^2)^{\gamma/2} e^{kx^2} (N+k)^{\gamma/2}c(\gamma,x)
$$

\noindent where $c$ is independent of $N$ and for each $x$, $c(\gamma,x)$ is a polynomial in $\gamma$ (this is just from the fact that the Taylor coefficients of $x\mapsto (1+x)^\gamma$ are generalized binomial coefficients -- polynomials in $\gamma$) and for each $\gamma$, $c(\gamma,x)$ is a rational function in $x$ with possible poles at $x=0$ or $x=\pm 1$). With similar reasoning, the Laurent coefficients of $\zeta(w)^{-j-1}$ are of the form

$$
(N+k)^{-j-1}\rho(x)
$$

\noindent where $\rho$ is a rational function independent of $N$ and $\gamma$ and its possible poles are at $x=0$ and $x=\pm 1$. So combining these two representations, we see that the Laurent coefficients of $e^{kxw}w^{\gamma/2}(w-x)^{-\gamma/2}\zeta(w)^{\gamma/2}\sum_{j=0}^{r}\frac{1}{\Gamma(\frac{\gamma}{2}-j)}\zeta(w)^{-j-1}$ (and in particular $h_{j,r}$ which are just the negative Laurent coefficients) can be written in the form

$$
(1-x^2)^{\gamma/2}e^{kx^2}\sum_{j=0}^r (N+k)^{\frac{\gamma}{2}-j-1}c_j(\gamma,x)\frac{1}{\Gamma(\frac{\gamma}{2}-j)},
$$

\noindent where all we need to know about the functions $c_j(\gamma,x)$ is that they are independent of $N$ (though they do depend on $k$), polynomials in $\gamma$ and rational functions in $x$ with the only possible poles being at $x=0$ or $x=\pm 1$. Note that in our notation, we hide the fact that the function $c_j$ will depend on which Laurent coefficient we are looking at.

Now $h_r(0,\gamma)=\sum_{j=0}^rh_{j,r}(-x)^{-j-1}$ and 

$$
\lim_{w\to x} \left[h_r(w,\gamma)-e^{kxw}w^{\gamma/2}(w-x)^{-\gamma/2}\zeta(w)^{\gamma/2}\sum_{j=0}^{r}\frac{1}{\Gamma(\frac{\gamma}{2}-j)}\zeta(w)^{-j-1}\right],
$$

\noindent which is simply minus the order zero Laurent coefficient of the function $w\mapsto e^{kxw}w^{\gamma/2}(w-x)^{-\gamma/2}\zeta(w)^{\gamma/2}\sum_{j=0}^{r}\frac{1}{\Gamma(\frac{\gamma}{2}-j)}\zeta(w)^{-j-1}$, can both be written in the form $(1-x^2)^{\gamma/2}e^{kx^2}\sum_{j=0}^r (N+k)^{\frac{\gamma}{2}-j-1}c_j(\gamma,x)\frac{1}{\Gamma(\frac{\gamma}{2}-j)}$ (now with a different $c_j$ as before and different in both cases) where $c_j$ is again independent of $N$, polynomial in $\gamma$, rational in $x$, and its only possible poles are at $x=0$ or $x=\pm 1$. From this representation, the analyticity claim along with all the different claims about the bounds are immediate -- we omit further details.
\end{proof}

We now turn to the local parametrix.

\subsection{The local parametrix} Here we look for a function which has the same jump conditions as $S$ in a small enough neighborhood of $x$ and (in the notation of Definition \ref{def:h}) up to a term of order $\mathcal{O}(N^{\frac{\mathrm{Re}(\gamma)}{2}-r-2})$, agrees with $\widehat{P}^{(\infty,r)}$ on the boundary of this neighborhood.\footnote{Note that typically one considers matching conditions up to a term of order $N^{-1}$, but as in our differential identity, there are essentially terms proportional to $N^{\frac{\gamma}{2}+1}$, we need our error terms to be of order $N^{-\frac{\gamma}{2}-2}$. Moreover, as we vary $\gamma$, in our differential identity, we have added this extra parameter $r$ to ensure that throughout the values of $\gamma$ we integrate over, the error stays small.} To do this, let $U$ be a small but fixed circular neighborhood of $x$. We assume that the neighborhood is small enough so that 
$0,1\notin U$. We will also think of $\zeta$ (from \eqref{eq:zeta}) as a coordinate change of this neighborhood -- for this reason, we'll also want $U$ to be small enough that $\zeta$ is one-to-one on it. $\zeta$ blows up $U$ conformally into a large neighborhood of the origin. From the definition of $\Sigma$, it follows that $\zeta$ maps $U\cap \Sigma$ into a segment of the imaginary axis.  We then define our local parametrix in the following way. For $w\in U$, let

\begin{equation}\label{eq:local}
P^{(x,r)}(w)=\begin{pmatrix}
1 &  Q_r(w)\\
0 & 1
\end{pmatrix}\widehat{P}^{(\infty,r)}(w),
\end{equation}

\noindent where

\begin{equation}\label{eq:Q}
Q_r(w)=w^{\frac{\gamma}{2}}(w-x)^{-\frac{\gamma}{2}}\zeta(w)^{\frac{
\gamma}{2}}e^{kxw}\left[\zeta(w)^{-\frac{\gamma}{2}}e^{\zeta(w)}\frac{\Gamma\left(\frac{\gamma}{2},\zeta(w)\right)}{\Gamma(\frac{\gamma}{2})}-\sum_{j=0}^{r}\frac{1}{\Gamma(\frac{\gamma}{2}-j)}\zeta(w)^{-j-1}\right]
\end{equation}

\noindent and $\Gamma(\nu,\zeta)$ is the upper incomplete gamma-function:

\begin{align}\label{eq:incomp}
\Gamma(\nu,\zeta)=\Gamma(\nu)\left(1-\zeta^\nu \gamma^*(\nu,\zeta)\right),
\end{align}

\noindent where $\gamma^*(\nu,\zeta)=e^{-\zeta}\sum_{j=0}^\infty \frac{\zeta^j}{\Gamma(j+\nu+1)}$ is an entire function of $\zeta$, and the branch of the root is the principal one.

\begin{remark}\label{rem:local}
We'll try to clarify the definition of $P^{(x,r)}$ and $\widehat{P}^{(\infty,r)}$ now. Note that in the definition of $P^{(x,r)}$, by our discussion in the previous section, 

$$
w\mapsto w^{\frac{\gamma}{2}}(w-x)^{-\frac{\gamma}{2}}\zeta(w)^{\frac{
\gamma}{2}}e^{kxw}\sum_{j=0}^{r}\frac{1}{\Gamma(\frac{\gamma}{2}-j)}\zeta(w)^{-j-1}
$$

\noindent is a sum of an analytic function and a Laurent polynomial in $(w-x)$ in $U$ $($if we choose it small enough$)$. Thus it does not affect the jump structure of $P^{(x,r)}$. The role of the incomplete gamma function is to produce the desired jump structure. The $\sum_{j=0}^{r}\frac{1}{\Gamma(\frac{\gamma}{2}-j)}\zeta^{-j-1}$-term is required for the matching condition to hold. Indeed $($see \eqref{eq:gammaasy}$)$, this is the beginning of the asymptotic expansion of $\zeta^{-\frac{\gamma}{2}}e^{\zeta}\frac{\Gamma\left(\frac{\gamma}{2},\zeta\right)}{\Gamma(\frac{\gamma}{2})}$ and is valid for large $|\zeta|$. This yields an error of size $N^{\frac{\mathrm{Re}(\gamma)}{2}-r-2}$ in the matching condition. But in addition to having the correct jump and matching conditions, we also need $P^{(x,r)}$ to have the correct type of singularity at $x$ to end up with a small norm problem. For this, we need to counter the singularities at $w=x$ coming from the sum $\sum_{j=0}^{r}\frac{1}{\Gamma(\frac{\gamma}{2}-j)}\zeta(w)^{-j-1}$. This is done by the function $h_r$ and is where the condition \eqref{eq:hcond} comes from.
\end{remark}

To construct a small norm problem, we'll need to know what kind of Riemann-Hilbert problem $P^{(x,r)}$ satisfies. 

\begin{lemma}\label{le:localRHP}
$P^{(x,r)}$ satisfies the following Riemann-Hilbert problem. 

\begin{itemize}[leftmargin=0.5cm]
\item $P^{(x,r)}:U\setminus ([0,x]\cup \Sigma)\to \C^{2\times 2}$ is analytic.
\item $P^{(x,r)}$ has continuous boundary values on $U\cap ([0,x]\cup \Sigma)\setminus \lbrace x\rbrace$ and they satisfy the following jump conditions:

\begin{equation}\label{eq:localjump1}
P^{(x,r)}_+(w)=P^{(x,r)}_-(w)\begin{pmatrix}
0 & (w-x)^{\frac{\gamma}{2}}w^{-\frac{\gamma}{2}}e^{kxw}\\
-(w-x)^{-\frac{\gamma}{2}}w^{\frac{\gamma}{2}}e^{-kxw} & 0
\end{pmatrix}, \qquad w\in \Sigma\setminus \lbrace x\rbrace
\end{equation}

\noindent and 

\begin{equation}\label{eq:localjump2}
P^{(x,r)}_+(w)=P^{(x,r)}_-(w)\begin{pmatrix}
1 & 0\\
2i \sin \frac{\pi\gamma}{2}|w|^{\gamma/2}|w-x|^{-\frac{\gamma}{2}}e^{-kxw}e^{-(N+k)(xw+\ell-\log w)} & 1
\end{pmatrix}, \qquad w\in (0,x).
\end{equation}

\item As $w\to x$ from $\mathrm{Int}(\Sigma)\setminus[0,x]$, 

\begin{equation}\label{eq:localasy}
S(w)P^{(x,r)}(w)^{-1}=\mathcal{O}(1)+\mathcal{O}(|w-x|^{\frac{\mathrm{Re}(\gamma)}{2}})
\end{equation}

\noindent and as $w\to x$ from $\mathrm{Ext}(\Sigma)$, 

\begin{equation}\label{eq:localasyb}
P^{(x,r)}(w)=\mathcal{O}(|w-x|^{-\frac{|\mathrm{Re}(\gamma)|}{2}})
\end{equation}

\noindent where the notation means that each entry satisfies the claimed bound.

\item We have for each fixed $k\in \Z$

\begin{align}\label{eq:localmatch}
P^{(x,r)}(w)\left[\widehat{P}^{(\infty,r)}(w)\right]^{-1}=I+\mathcal{O}(N^{\frac{\mathrm{Re}(\gamma)}{2}-r-2}),
\end{align}

\noindent uniformly in $w\in \partial U$, $\gamma$ in compact subsets of $\lbrace \gamma\in \C:\mathrm{Re}(\gamma)>-2\rbrace$, and $x$ in compact subsets of $(0,1)$.
\end{itemize}
\end{lemma}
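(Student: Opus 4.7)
The plan is to verify the four items of the lemma by exploiting the factorization $P^{(x,r)} = \begin{pmatrix} 1 & Q_r \\ 0 & 1 \end{pmatrix}\widehat{P}^{(\infty,r)}$, where $\widehat{P}^{(\infty,r)} = \begin{pmatrix} 1 & h_r \\ 0 & 1 \end{pmatrix} P^{(\infty)}$ differs from the explicit global parametrix \eqref{eq:global} only by a Laurent polynomial in $(w-x)^{-1}$ that has no jumps away from $w=x$. Consequently all jumps of $\widehat{P}^{(\infty,r)}$ are inherited from $P^{(\infty)}$, and every extra contribution to analyticity, jumps, and the matching identity comes from the single function $Q_r$.

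For analyticity on $U\setminus([0,x]\cup\Sigma)$ and the jump \eqref{eq:localjump1} on $\Sigma$: using \eqref{eq:incomp} the bracket in \eqref{eq:Q} equals $\zeta^{-\gamma/2}e^\zeta - \gamma^*(\gamma/2,\zeta)e^\zeta - \sum_{j=0}^r \zeta^{-j-1}/\Gamma(\gamma/2-j)$, with $\gamma^*$ entire, so its only discontinuities are the principal-branch cuts of $\zeta^{-\gamma/2}$ and of $w^{\gamma/2}(w-x)^{-\gamma/2}$. By Lemma \ref{le:Sigmasign}, $\zeta(\Sigma\cap U)\subset i\mathbb{R}$, hence $Q_r$ is continuous across $\Sigma\cap U$, and the jump of $P^{(x,r)}$ there reduces to that of $P^{(\infty)}$, which by direct computation from \eqref{eq:global} is exactly \eqref{eq:localjump1}. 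On $(0,x)\subset \mathrm{Int}(\Sigma)$, both $P^{(\infty)}$ and $h_r$ are analytic, so the jump comes entirely from $Q_r$, produced by the branch discontinuity of $w^{\gamma/2}(w-x)^{-\gamma/2}$ and by $\zeta(w)$ becoming negative real on $(0,x)$ near $x$; computing $Q_{r,+}-Q_{r,-}$ using $e^\zeta = e^{-(N+k)(xw-\log w+\ell)}$ and $e^{i\pi\gamma/2}-e^{-i\pi\gamma/2}=2i\sin(\pi\gamma/2)$ yields \eqref{eq:localjump2} after one checks that the Laurent-polynomial summand, whose jump is of the same form times $\zeta^{-j-1}$, cancels algebraically on $(0,x)$. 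The finite limit as $w\to x$ from $\mathrm{Int}(\Sigma)\setminus[0,x]$ asserted in \eqref{eq:localasy} is the whole purpose of $h_r$: by \eqref{eq:hcond} the singular principal part at $w=x$ of the Laurent-polynomial piece of $Q_r$ is exactly cancelled by $h_r$, and once this cancellation is carried out the remaining pieces of $P^{(x,r)}$ involve only the entire function $\gamma^*(\gamma/2,\zeta)$ together with bounded holomorphic prefactors, giving the required behavior when matched against the expansion of $S$ at $x$ in \eqref{eq:Sasy}. The bound \eqref{eq:localasyb} in $\mathrm{Ext}(\Sigma)\cap U$ is immediate from the diagonal entries of $P^{(\infty)}$.

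The matching step is the core of the argument, and since $P^{(x,r)}[\widehat{P}^{(\infty,r)}]^{-1}=\begin{pmatrix}1 & Q_r \\ 0 & 1\end{pmatrix}$, it reduces to bounding $Q_r$ on $\partial U$. The key input is the classical uniform asymptotic expansion
\[
\zeta^{-\nu}e^{\zeta}\frac{\Gamma(\nu,\zeta)}{\Gamma(\nu)}=\sum_{j=0}^{r}\frac{\zeta^{-j-1}}{\Gamma(\nu-j)}+O\bigl(|\zeta|^{-r-2}\bigr), \qquad |\zeta|\to\infty,\ |\arg\zeta|\leq\pi-\delta,
\]
valid uniformly for $\nu$ in compact subsets of $\{\mathrm{Re}(\nu)>-1\}$; this immediately bounds the bracket in \eqref{eq:Q} by $O(|\zeta|^{-r-2})$. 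Choosing $U$ a sufficiently small disk around $x$ independent of $N$, the expansion \eqref{eq:zetaseries} gives $|\zeta(w)|\asymp N$ uniformly on $\partial U$, and combined with the prefactor $\zeta^{\gamma/2}=O(N^{\mathrm{Re}(\gamma)/2})$ and the bounded holomorphic prefactor $w^{\gamma/2}(w-x)^{-\gamma/2}e^{kxw}$ this yields \eqref{eq:localmatch}. The main obstacle is the last step: extracting from the standard references an asymptotic expansion of $\Gamma(\nu,\zeta)$ whose error is truly uniform in $\nu$ on compacts of $\{\mathrm{Re}(\nu)>-1\}$ and in $\arg\zeta$ over the relevant sector, and then verifying via \eqref{eq:zetaseries} and Lemma \ref{le:Sigmasign} that $\zeta(\partial U)$ stays in a proper sector $\{|\arg\zeta|\leq\pi-\delta\}$ uniformly in $N$—equivalently, that $U$ can be chosen to miss $\Sigma$ and to force $\arg\zeta$ to cluster near the direction of $(1-x^2)/x$.
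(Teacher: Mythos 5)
Your decomposition $P^{(x,r)}=\begin{pmatrix}1 & Q_r+h_r\\0&1\end{pmatrix}P^{(\infty)}$ is exactly the one the paper uses (the paper merely factors the upper-triangular part into an incomplete-gamma piece and the analytic piece $h_r-$sum), so the route through $Q_r$ and the jump of $(w-x)^{-\gamma/2}$ alone is the intended one; the key structural fact making the Laurent-polynomial summand jump-free is that $w^{\gamma/2}(w-x)^{-\gamma/2}\zeta(w)^{\gamma/2}$ is analytic near $x$ (the two branch discontinuities across $(0,x)$ cancel), and your phrase ``cancels algebraically'' seems to be getting at this, though the description ``whose jump is of the same form times $\zeta^{-j-1}$'' is muddled.

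There are, however, two genuine gaps in the local-behavior claims. First, \eqref{eq:localasy} cannot be deduced by ``matching against $S$'s expansion in \eqref{eq:Sasy}.'' That bound permits entries of $S$ as large as $|w-x|^{-\Re(\gamma)/2}$, and simply multiplying it by $P^{(x,r)}(w)^{-1}$ gives at best $\mathcal{O}(|w-x|^{-|\Re\gamma|/2})$, not the claimed $\mathcal{O}(1)+\mathcal{O}(|w-x|^{\Re(\gamma)/2})$. The actual content of \eqref{eq:localasy} is a cancellation: one must write $S=T\cdot(\text{lens matrix})$ and observe that the lens factor cancels against the incomplete-gamma contribution inside $P^{(x,r)}(w)^{-1}$, after which $S\,P^{(x,r)-1}=T(w)\begin{pmatrix}0 & -e^{kxw}\\ e^{-kxw} & \alpha_1+\alpha_2\end{pmatrix}$ with $\alpha_1,\alpha_2$ analytic, and one then uses that $T$ has the same behavior at $x$ as $Y$ in \eqref{eq:Yatxasy1}. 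You need to carry this out rather than invoke \eqref{eq:Sasy}. Second, \eqref{eq:localasyb} is not ``immediate from the diagonal of $P^{(\infty)}$'': in $\mathrm{Ext}(\Sigma)$ the $(1,2)$ entry of $P^{(x,r)}$ is $(Q_r+h_r)\,w^{-\gamma/2}(w-x)^{\gamma/2}$, and you must estimate $Q_r+h_r$ near $x$ (splitting it into the $h_r-$sum piece, which is bounded by Definition \ref{def:h}, and the incomplete-gamma piece, which is $\mathcal{O}(1)+\mathcal{O}(|w-x|^{-\Re(\gamma)/2})$ by \eqref{eq:incomp}).

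Finally, the obstacle you raise about the argument of $\zeta$ is not a real one, and the remedy you propose is impossible: $\Sigma$ passes through $x$, so $U$ cannot ``miss $\Sigma$,'' and $\zeta(\partial U)$ genuinely winds once around the origin rather than clustering near a single direction. This is harmless because the asymptotic expansion of $\Gamma(\nu,\zeta)$ holds uniformly in the sector $|\arg\zeta|\le \tfrac{3\pi}{2}-\delta$ (not just $\pi-\delta$), which easily contains $\zeta(\partial U)$ where the principal argument lies in $(-\pi,\pi]$.
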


\noindent Again for the proof, see Appendix \ref{app:RHPasy}.

We are now in a position to perform our final transformation and complete our asymptotic analysis of $Y_{N+k}$.

\subsection{The final transformation and asymptotic analysis} 

Our final transformation of the problem is the following one (we drop the $r$ dependence from $R$):

\begin{equation}\label{eq:Rdef}
R(w)=\begin{cases}
S(w)\left[P^{(x,r)}(w)\right]^{-1}, & w\in U\\
S(w)\left[\widehat{P}^{(\infty,r)}(w)\right]^{-1}, & w\in \C\setminus \overline{U}
\end{cases}.
\end{equation}

We now describe the RHP $R$ solves (still assuming that $Y$ and hence $R$ exists).

\begin{lemma}\label{le:RRHP}
$R$ is the unique solution to the following RHP$:$

\begin{itemize}[leftmargin=0.5cm]
\item $R:\C\setminus(\partial U\cup \lbrace |w|=1\rbrace)\to \C^{2\times 2}$ is analytic. 
\item $R$ has continuous boundary values on $\partial U\cup \lbrace |w|=1\rbrace$ and these satisfy 

\begin{equation}\label{eq:Rjump1}
R_+(w)=R_-(w)P^{(x,r)}(w)\left[\widehat{P}^{(\infty,r)}(w)\right]^{-1}, \qquad w\in \partial U
\end{equation}

\noindent and 

\begin{equation}\label{eq:Rjump2}
R_+(w)=R_-(w)\widehat{P}^{(\infty,r)}(w)\begin{pmatrix}
1 & 0\\
w^{\frac{\gamma}{2}}(w-x)^{-\frac{\gamma}{2}}e^{-kxw}e^{(N+k)(xw+\ell-\log w)} & 1
\end{pmatrix}\left[\widehat{P}^{(\infty,r)}(w)\right]^{-1},  \ |w|=1.
\end{equation}

\item As $w\to \infty$, $R(w)=I+\mathcal{O}(w^{-1})$.
\end{itemize}

\noindent Moreover, if we write $I+\Delta_R$ for the jump matrix of $R$, then for each fixed $k\in \Z$, as $N\to\infty$, $\sup_{w\in \partial U}|\Delta_R(w)|=\mathcal{O}({N^{\frac{\mathrm{Re}(\gamma)}{2}-r-2}})$ and $\sup_{|w|=1}|\Delta_R(w)|=\mathcal{O}(e^{-cN})$ for some $c>0$.  The implied constants in these estimates are uniform in $\gamma$ in compact subsets of $\lbrace \gamma\in \C:\mathrm{Re}(\gamma)>-2\rbrace$ and for $x$ in a compact subset of $(0,1)$.
\end{lemma}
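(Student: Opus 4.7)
The plan is to verify each bullet of the lemma by unpacking the definition \eqref{eq:Rdef} and invoking the properties of $S$, $\widehat{P}^{(\infty,r)}$, and $P^{(x,r)}$ from Lemmas \ref{le:SRHP} and \ref{le:localRHP}, together with the observation that $\widehat{P}^{(\infty,r)}$ inherits the jump \eqref{eq:Sjump2} across $\Sigma$ from $P^{(\infty)}$ (the triangular prefactor with $h_r$ is analytic off $\{x\}$, and in particular across $\Sigma$).

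First I would check analyticity of $R$ off the claimed contour. Inside $U$, $R = S[P^{(x,r)}]^{-1}$, and the local parametrix is designed so that \eqref{eq:localjump1}--\eqref{eq:localjump2} cancel the jumps \eqref{eq:Sjump2}--\eqref{eq:Sjump3} of $S$ on $\Sigma\cap U$ and $[0,x]\cap U$. Outside $\overline{U}$ (and inside $\{|w|<1\}$), $R = S[\widehat{P}^{(\infty,r)}]^{-1}$, and $\widehat{P}^{(\infty,r)}$ carries the jump \eqref{eq:Sjump2} across $\Sigma\setminus\overline{U}$, cancelling the corresponding jump of $S$. The only residual jump not explicitly listed is that of $S$ along $[0,x]\setminus U$; but by Lemma \ref{le:Sigmasign}, $\mathrm{Re}(xw+\ell-\log w)$ is positive and bounded away from $0$ on that set, so the off-diagonal of the jump matrix in \eqref{eq:Sjump3} is $\mathcal{O}(e^{-cN})$ there. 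This residual contribution is exponentially small and is absorbed into the small-norm problem (effectively combined with the $\{|w|=1\}$ estimate); this is the one slightly informal point of the statement, and the place where one has to be a bit careful.

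Next, I would verify that $R$ extends analytically through $x$ and through $0$. Near $x$, the estimate \eqref{eq:localasy} gives $S(w)[P^{(x,r)}(w)]^{-1} = \mathcal{O}(1) + \mathcal{O}(|w-x|^{\mathrm{Re}(\gamma)/2})$ from inside $\Sigma$; from outside $\Sigma$, combining \eqref{eq:Sasy} with \eqref{eq:localasyb} and the explicit form of $\widehat{P}^{(\infty,r)}$ shows $R$ is locally bounded (the $(w-x)^{\pm \gamma/2}$ factors in $P^{(\infty)}$ and in $S$ cancel against each other, and the potential pole of $h_r$ at $x$ is absorbed by the construction in Definition \ref{def:h}). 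Since $\mathrm{Re}(\gamma)>-2$, the singularity is subintegrable and standard Cauchy-type removability arguments (cf.\ \cite[\S33]{musk}) give analytic continuation through $x$. Near $0$, $S$ is bounded by the last bullet of Lemma \ref{le:SRHP} and $[\widehat{P}^{(\infty,r)}]^{-1}$ is regular, so the singularity is removable as well.

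With analyticity in hand, the jump formulas are immediate: on $\partial U$, $R_+ = R_- \, P^{(x,r)}[\widehat{P}^{(\infty,r)}]^{-1}$, yielding \eqref{eq:Rjump1}; on $\{|w|=1\}$, which lies in $\mathrm{Ext}(\Sigma)$ outside $U$ (using $1\notin U$), only $S$ has a jump (given by \eqref{eq:Sjump1}), so conjugation by $\widehat{P}^{(\infty,r)}$ yields \eqref{eq:Rjump2}. The normalization at infinity follows because $S(w)=I+\mathcal{O}(w^{-1})$ and $\widehat{P}^{(\infty,r)}(w)=I+\mathcal{O}(w^{-1})$ on $\mathrm{Ext}(\Sigma)$ (both the diagonal factor $w^{\pm\gamma/2}(w-x)^{\mp\gamma/2}$ and $h_r(w)$ expand as $1+\mathcal{O}(w^{-1})$ and $\mathcal{O}(w^{-1})$ respectively at infinity). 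Uniqueness is the usual Liouville argument applied to the ratio of two solutions.

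For the quantitative bounds, the $\partial U$ estimate is exactly the matching condition \eqref{eq:localmatch}, giving $\|\Delta_R\|_{\partial U}=\mathcal{O}(N^{\mathrm{Re}(\gamma)/2-r-2})$. For $\{|w|=1\}$, one writes $\Delta_R = \widehat{P}^{(\infty,r)}(J_S-I)[\widehat{P}^{(\infty,r)}]^{-1}$ with $J_S$ the matrix in \eqref{eq:Sjump1}. The off-diagonal entry of $J_S-I$ contains the factor $e^{(N+k)(xw+\ell-\log w)}$, which is $\mathcal{O}(e^{-cN})$ uniformly on $|w|=1$ by Lemma \ref{le:Sigmasign}; the remaining factors $w^{\gamma/2}(w-x)^{-\gamma/2}e^{-kxw}$ are uniformly bounded in $\gamma$ and $x$ in the indicated compacta, and conjugation preserves this because $\widehat{P}^{(\infty,r)}$ and its inverse have entries bounded on $|w|=1$ (using Lemma \ref{le:h} to control $h_r$). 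The residual $[0,x]\setminus U$ jump, as noted above, obeys the same $\mathcal{O}(e^{-cN})$ bound by Lemma \ref{le:Sigmasign}. All uniformities in $\gamma$ and $x$ are inherited from Lemmas \ref{le:h}, \ref{le:Sigmasign}, and \ref{le:localRHP}; the main obstacle, as above, is the bookkeeping around the exponentially small tail jump on $[0,x]\setminus U$, rather than anything conceptually new.
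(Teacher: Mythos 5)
Your proposal follows the same outline as the paper, but there is a gap in the step that removes the isolated singularity at $x$. You claim that from $\mathrm{Ext}(\Sigma)$, combining \eqref{eq:Sasy}, \eqref{eq:localasyb}, and the explicit form of the parametrix ``shows $R$ is locally bounded,'' and then appeal to subintegrability. However, \eqref{eq:localasyb} bounds $P^{(x,r)}$, not the product $S[P^{(x,r)}]^{-1}$; multiplying the bounds \eqref{eq:Sasy} and \eqref{eq:localasyb} directly only gives $R(w)=\mathcal{O}(|w-x|^{-|\mathrm{Re}(\gamma)|})$ from $\mathrm{Ext}(\Sigma)$, which is neither bounded nor subintegrable once $|\mathrm{Re}(\gamma)|\ge 1$. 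The entrywise cancellation of the $(w-x)^{\pm\gamma/2}$ factors that you invoke is precisely what needs to be proven and does not follow from the lemmas you cite. The paper's argument avoids this: from \eqref{eq:Sasy}, \eqref{eq:localasy} and \eqref{eq:localasyb} one only concludes that the singularity has bounded order, hence is at worst a pole; then, using solely the one-sided estimate \eqref{eq:localasy} from $\mathrm{Int}(\Sigma)\setminus[0,x]$ together with $\mathrm{Re}(\gamma)>-2$, one has $(w-x)R(w)\to 0$ along that approach. Since the order of a pole is independent of the direction of approach, this rules out any pole. That directional-order argument is the missing ingredient in your proposal.

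On the positive side, you correctly flagged a point the paper's proof glosses over: $\widehat{P}^{(\infty,r)}$ is analytic across $(0,x)$, so $R=S[\widehat{P}^{(\infty,r)}]^{-1}$ retains the exponentially small jump \eqref{eq:Sjump3} on $(0,x)\setminus \overline{U}$. Strictly, $\Gamma_R$ should include that segment with jump $I+\mathcal{O}(e^{-cN})$ by Lemma \ref{le:Sigmasign}, and the analyticity bullet of the lemma is not literally exact as stated. This is harmless for Lemma \ref{le:smnorm} and your handling of it (folding it into the small-norm estimates alongside the unit-circle jump) is the right fix, but it is worth saying explicitly rather than as an aside. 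The remaining items — the jump formulas, the normalization at infinity, the $\partial U$ bound from \eqref{eq:localmatch}, the unit-circle bound from Lemma \ref{le:Sigmasign}, and uniqueness — are handled correctly and match the paper's reasoning.
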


\noindent Again, the proof is in Appendix \ref{app:RHPasy}.

Now this RHP is one that's normalized at infinity and whose jump matrix is close to the identity when $N\to\infty$. Thus it can be solved asymptotically through the standard machinery. In particular if we take $N$ large enough (possibly depending on $\gamma$), then a solution exists. Then reversing the transformations, this implies that $Y$ exists for large enough $N$. In addition to existence, the standard machinery yields the following estimate. 

\begin{lemma}\label{le:smnorm}
Let $K$ be a compact subset of $\lbrace \gamma\in \C:\mathrm{Re}(\gamma)>-2\rbrace$ and let $r>\sup_{\gamma\in K}\mathrm{Re}(\gamma/2)-2$. Then there exists a $N_0=N_0(K)$ such that for $N\geq N_0$, a unique solution to the RHP of Lemma \ref{le:RRHP} exists. Let $\Gamma_R = \partial U \cup \lbrace |w| = 1\rbrace$ be the jump contour of $R$. Then as $N\to\infty$, 
\begin{align}\label{eq:R1}
R(w) = I + \mathcal{O}(N^{\frac{\mathrm{Re}(\gamma)}{2}-r-2}),
\qquad \lim_{w \to \infty} w[R(w) - I] = \mathcal{O}(N^{\frac{\mathrm{Re}(\gamma)}{2}-r-2})
\end{align}

\noindent uniformly in $w \in \mathbb{C} \setminus \Gamma_R$, $\gamma\in K$ and for $x$ in a compact subset of $(0,1)$. Moreover, for any fixed $\epsilon>0$
\begin{align}\label{eq:R2}
\partial_\gamma R(w) = \mathcal{O}(N^{\frac{\mathrm{Re}(\gamma)}{2}-r-2+\epsilon})
\qquad \partial_\gamma \left[\lim_{w \to \infty} w(R(w)-I)\right] = \mathcal{O}(N^{\frac{\mathrm{Re}(\gamma)}{2}-r-2+\epsilon}),
\end{align}

\noindent uniformly in $w\in \C\setminus \Gamma_R$, $\gamma\in K$, as well as uniformly in $x$ when restricted to a compact subset of $(0,1)$.
\end{lemma}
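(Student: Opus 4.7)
The plan follows the standard small-norm Riemann--Hilbert machinery, with the main subtlety being that we need to preserve the sharp error $N^{\mathrm{Re}(\gamma)/2-r-2}$ when differentiating in $\gamma$; this is only possible because of the careful construction of $\widehat{P}^{(\infty,r)}$ and the accompanying estimates of Lemma~\ref{le:h}.

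\textbf{Step 1 (reduction to a singular integral equation and existence).} I would first recast the RHP of Lemma~\ref{le:RRHP} as a singular integral equation. Writing $I+\Delta_R$ for the jump matrix on $\Gamma_R=\partial U\cup\{|w|=1\}$, the standard Beals--Coifman framework produces, for any solution normalized at $\infty$,
\[
R(w)=I+\frac{1}{2\pi i}\int_{\Gamma_R}\frac{\mu(s)\Delta_R(s)}{s-w}\,ds,\qquad w\notin\Gamma_R,
\]
where $\mu$ solves $(I-\mathcal{C}_{\Delta_R})\mu=I$ on $L^2(\Gamma_R)$ with $\mathcal{C}_{\Delta_R}\nu:=\mathcal{C}_-(\nu\Delta_R)$ and $\mathcal{C}_-$ the Cauchy projection from the minus side. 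Boundedness of $\mathcal{C}_-$ on $L^2(\Gamma_R)$ (the contour is smooth and bounded) combined with the estimates of Lemma~\ref{le:RRHP} and the hypothesis $r>\sup_{\gamma\in K}\mathrm{Re}(\gamma/2)-2$ gives $\|\mathcal{C}_{\Delta_R}\|_{L^2\to L^2}\to 0$ uniformly for $\gamma\in K$. Thus $I-\mathcal{C}_{\Delta_R}$ is invertible by Neumann series for $N\geq N_0(K)$, yielding a unique $\mu\in I+L^2(\Gamma_R)$ and hence a unique $R$; reversing the chain $R\leftrightarrow S\leftrightarrow T\leftrightarrow Y$ in \eqref{eq:Rdef}, \eqref{eq:Sdef}, \eqref{eq:Tdef} gives existence of $Y_{N+k}$.

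\textbf{Step 2 (pointwise estimates for $R$).} From $\mu=I+(I-\mathcal{C}_{\Delta_R})^{-1}\mathcal{C}_{\Delta_R}(I)$ one reads off $\|\mu-I\|_{L^2(\Gamma_R)}=\mathcal{O}(\|\Delta_R\|_{L^2})$, and a routine argument upgrades this to $\|\mu\|_{L^\infty(\Gamma_R)}=\mathcal{O}(1)$. Plugging into the integral representation,
\[
R(w)-I=\frac{1}{2\pi i}\int_{\Gamma_R}\frac{\mu(s)\Delta_R(s)}{s-w}\,ds=\mathcal{O}(N^{\mathrm{Re}(\gamma)/2-r-2})
\]
uniformly in $w\in\mathbb{C}\setminus\Gamma_R$ at positive distance from $\Gamma_R$, which is the first half of \eqref{eq:R1}. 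Expanding $(s-w)^{-1}=-w^{-1}+\mathcal{O}(|w|^{-2})$ as $w\to\infty$ yields $\lim_{w\to\infty}w[R(w)-I]=-\tfrac{1}{2\pi i}\int_{\Gamma_R}\mu(s)\Delta_R(s)\,ds$, bounded in modulus by the length of $\Gamma_R$ times $\|\mu\|_{L^\infty}\|\Delta_R\|_{L^\infty}$, which gives the second half.

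\textbf{Step 3 ($\gamma$-derivative and main obstacle).} Since all the objects are analytic in $\gamma$ on $\{\mathrm{Re}(\gamma)>-2\}$, so is $\mu$, and differentiating $(I-\mathcal{C}_{\Delta_R})\mu=I$ gives $(I-\mathcal{C}_{\Delta_R})\partial_\gamma\mu=\mathcal{C}_-(\mu\,\partial_\gamma\Delta_R)$. The only new input is a bound on $\partial_\gamma\Delta_R$. From \eqref{eq:Rjump1} and \eqref{eq:Rjump2} the jump $\Delta_R$ is an explicit expression built from $h_r$, $\widehat{P}^{(\infty,r)}$ and elementary functions, so the bounds \eqref{eq:hbound2} and \eqref{eq:hbound4} -- which gain a single $\log N$ over \eqref{eq:hbound1}, \eqref{eq:hbound3} -- give $\sup_{\partial U}|\partial_\gamma\Delta_R|=\mathcal{O}(\log N\cdot N^{\mathrm{Re}(\gamma)/2-r-2})$, while on $|w|=1$ the exponentially small factor produced by Lemma~\ref{le:Sigmasign} absorbs all polynomial and logarithmic growth. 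Since $\log N=\mathcal{O}(N^\epsilon)$, the argument of Step~2 run with $\partial_\gamma\mu$ in place of $\mu-I$ delivers \eqref{eq:R2}. The genuine difficulty here is not the Neumann series itself but the \emph{choice} of $r$ and the corrector $h_r$ entering $\widehat{P}^{(\infty,r)}$: these are precisely what make the matching error on $\partial U$ only polynomially small of a prescribed order and cost only a logarithm (rather than a power of $N$) upon $\gamma$-differentiation. That is what allows the $\epsilon$ in \eqref{eq:R2} to replace what would otherwise be a loss of an integer power.
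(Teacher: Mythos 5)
Your Steps 1 and 2 (existence via the Neumann series for $(I-\mathcal{C}_{\Delta_R})^{-1}$, then $L^2$ bounds on $\mu-I$ upgraded to pointwise bounds on $R-I$ and on $\lim_{w\to\infty} w(R-I)$) are essentially identical to what the paper does.

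Step 3 is where you diverge, and your route is a genuine alternative. You differentiate the singular integral equation $(I-\mathcal{C}_{\Delta_R})\mu=I$ in $\gamma$ to get $(I-\mathcal{C}_{\Delta_R})\partial_\gamma\mu=\mathcal{C}_-(\mu\,\partial_\gamma\Delta_R)$ and then feed in a bound on $\partial_\gamma\Delta_R$; the paper instead establishes analyticity of $R$ in $\gamma$ (by unwinding the transformations back to $Y$ and observing that the orthogonal polynomials, hence $Y$, admit explicit $\gamma$-analytic determinantal representations on $\Sigma$) and then applies Cauchy's integral formula directly to $R$ on a small square $L_\gamma$, getting $\partial_\gamma R(w,\gamma)=\frac{1}{2\pi i}\oint_{L_\gamma}\frac{R(w,\mu)-I}{(\mu-\gamma)^2}\,d\mu=\mathcal{O}(N^{\mathrm{Re}(\gamma)/2-r-2+\epsilon})$, the $\epsilon$-loss coming from the shift of $\mathrm{Re}(\mu)$ on $\partial L_\gamma$. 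Your way avoids the detour through the orthogonal polynomial representation; the paper's way avoids having to estimate $\partial_\gamma\Delta_R$ at all. Both are standard and both work here.

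There is, however, a small imprecision in your justification of the jump-derivative bound. You cite \eqref{eq:hbound2} and \eqref{eq:hbound4} to control $\partial_\gamma\Delta_R$ on $\partial U$, but those estimates concern $h_r(0;\gamma)$ and the regular part of $h_r$ at $x$ -- quantities that feed into evaluating the parametrices at $0$ and $x$ -- whereas $\Delta_R$ on $\partial U$ equals $\begin{pmatrix}0 & Q_r(w)\\ 0 & 0\end{pmatrix}$, and it is $Q_r$ on the circle $\partial U$ that you must differentiate. The correct way to get the needed bound is to observe that $Q_r(\cdot;\gamma)$ is analytic in $\gamma$ on $\{\mathrm{Re}(\gamma)>-2\}$ and satisfies $\sup_{\partial U}|Q_r|=\mathcal{O}(N^{\mathrm{Re}(\gamma)/2-r-2})$ uniformly on a slightly larger compact set $K'\supset K$ (this is exactly \eqref{eq:localmatch}, whose proof in Lemma \ref{le:localRHP} rests on the uniform asymptotics \eqref{eq:gammaasy} of the incomplete gamma function), and then apply Cauchy's formula in $\gamma$ to $Q_r$ itself -- which is of course the same move the paper makes one level up, at the level of $R$ rather than of $\Delta_R$. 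With that substitution your argument closes, and the $\log N$ you mention is not needed: $N^\epsilon$ from Cauchy's formula suffices and is all that the statement claims.
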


\begin{remark}\label{rem:zones}
Note that when integrating our differential identity, we can choose $K$ to be the integration and by choosing $r$ large enough, our error term will be uniformly small throughout the integration contour -- ensuring that we can essentially ignore $R$ when evaluating our differential identity.
\end{remark}

Armed with these estimates, we now turn to studying the asymptotic behavior of the differential identity and proving Theorem \ref{th:main}.

\section{Proof of Theorem \ref{th:main}: integrating the differential identity}\label{sec:diint}
We summarize the asymptotics of our differential identity in the following lemma.

\begin{lemma}\label{le:diasymp}
Let $\mathrm{Re}(\gamma)>-2$ and let $\gamma$ be such that $D_j^{(N)}(F;\gamma)\neq 0$ for all $j\leq N+1$. Then as $N\to\infty$, 

\begin{align}\label{eq:diasymp}
\partial_{\gamma} \log D_{N-1}(F; \gamma)
& = \frac{Nx^2}{2} + \partial_\gamma \sum_{j=0}^{N-1} \log \frac{\Gamma(\frac{\gamma}{2}+j+1)}{N^{\frac{\gamma}{2}}} + o(1).
\end{align}

\noindent Moreover, if $K$ is a compact subset of $\lbrace \gamma\in \C:\mathrm{Re}(\gamma)>-2\rbrace$, then the $o(1)$ error is uniform in  $\lbrace\gamma\in K: D_j^{(N)}(F; \gamma) \ne 0  \ for \ all \ j\leq N+1\rbrace$, and $x$ in compact subsets of $(0,1)$.
\end{lemma}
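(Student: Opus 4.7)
The strategy is to evaluate each quantity appearing on the right-hand side of the differential identity in Lemma~\ref{le:DI} by first expressing it in terms of entries of $Y_N$ and $Y_{N+1}$, and then substituting the asymptotic solution of the Riemann-Hilbert problem from Section~\ref{sec:RHPNasy}. Inverting the chain of transformations $Y\to T\to S\to R$ gives, in each region, a formula of the schematic form $Y_{N+k}(w)=e^{(N+k)\ell\sigma_3/2}\cdot(\text{lens factor})\cdot R(w)\widehat{P}^{(\infty,r)}(w)\cdot e^{(N+k)g(w)\sigma_3}e^{-(N+k)\ell\sigma_3/2}$ (with $P^{(x,r)}$ replacing $\widehat{P}^{(\infty,r)}$ inside $U$). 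Taking $r$ large enough as allowed by Remark~\ref{rem:zones} and invoking Lemma~\ref{le:smnorm} gives $R=I+\mathcal{O}(N^{\mathrm{Re}(\gamma)/2-r-2})$ and $\partial_\gamma R=\mathcal{O}(N^{\mathrm{Re}(\gamma)/2-r-2+\epsilon})$ uniformly on the jump contour, so $R$ contributes only through error terms that can be made $o(1)$ even after multiplication by the $N$-prefactors in Lemma~\ref{le:DI}.

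The key identifications are: $\chi_N\widehat{\chi}_N=-Y_{N+1,21}(0)$; $p_{N+k}(0)/\chi_{N+k}=Y_{N+k,11}(0)$; $\kappa_{N+k}/\chi_{N+k}$ is the coefficient of $w^{-1}$ in the $w\to\infty$ expansion of $w^{-N-k}Y_{N+k,11}(w)$; $q_N(0)/\widehat{\chi}_N$ is extracted from the leading large-$w$ behaviour of $Y_{N+1,21}(w)$; and the limits $L_1$ and $L_2$ of the contour integrals in Lemma~\ref{le:DI} are, up to explicit factors of $\chi_N$, boundary values of $Y_{N,12}$ and $Y_{N+1,22}$ at $x$ from $\mathrm{Int}(\Sigma)\setminus[0,x]$. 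Since in the exterior region we have $Y_{N+k,11}(w)=R_{N+k,11}(w)\,w^{\gamma/2}(w-x)^{-\gamma/2}w^{N+k}$, expanding $w^{\gamma/2}(w-x)^{-\gamma/2}=1+\gamma x/(2w)+\mathcal{O}(w^{-2})$ yields
\[
\frac{\kappa_{N+k}}{\chi_{N+k}}=\frac{\gamma x}{2}+\mathcal{O}(N^{\mathrm{Re}(\gamma)/2-r-2}),\qquad \partial_\gamma\frac{\kappa_N}{\chi_N}=\frac{x}{2}+\mathcal{O}(N^{\mathrm{Re}(\gamma)/2-r-2+\epsilon}).
\]
This is the source of the $Nx^2/2$ in the lemma: it comes from the term $Nx\,\partial_\gamma\kappa_N/\chi_N$ once we verify $\partial_\gamma\chi_N/\chi_N=o(1)$ below, so that the companion term $Nx(\partial_\gamma\chi_N/\chi_N)(\kappa_{N+1}/\chi_{N+1})=o(1)$.

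For the remaining terms, evaluating $Y_{N+1,21}(0)$ by inverting the transformations at $w=0$ (which lies in $\mathrm{Int}(\Sigma)\setminus\overline{U}$) together with $R=I+\mathcal{O}(N^{\mathrm{Re}(\gamma)/2-r-2})$ and the bound on $h_r(0;\gamma)$ from Lemma~\ref{le:h}, one gets $\chi_N\widehat{\chi}_N=1+o(1)$ and, combining with the identity~\eqref{eq:chihat} and the Stirling expansion $\psi(1+\gamma/2+N)-\log N=\mathcal{O}(1/N)$, one derives $\partial_\gamma\log\chi_N,\partial_\gamma\log\widehat{\chi}_N=\mathcal{O}(1/N)$ with leading contributions that cancel in $-(N+\gamma/2)\partial_\gamma\widehat{\chi}_N/\widehat{\chi}_N-N\partial_\gamma\chi_N/\chi_N$ so that this combination is $o(1)$. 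The remaining terms involving $L_1,L_2,p_N(x),q_N(x^{-1}),p_{N+1}(0),\partial_\gamma q_N(0)$ are estimated by substituting the explicit local parametrix $P^{(x,r)}$ near $x$ (whose construction was engineered precisely so that the singular $h_r$-contributions cancel against the $\sum_{j=0}^r\zeta^{-j-1}/\Gamma(\gamma/2-j)$ piece of $Q_r$) and using Lemma~\ref{le:h} to control the residual pieces; all such terms turn out to be $o(1)$ provided $r$ is chosen large enough. The main obstacle is the heavy bookkeeping: many products of several factors must be tracked to the $o(1)$ level against $N$-prefactors, and the mild $N^\epsilon$ loss in the $\partial_\gamma R$ bound forces $r$ to be taken sufficiently large in terms of the compact subset $K$ to ensure uniformity.
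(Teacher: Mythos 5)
Your overall strategy coincides with the paper's: invert the transformations $Y\to T\to S\to R$, evaluate the entries of $Y_N,Y_{N+1}$ at $w=0$, at $w\to\infty$, and near $w=x$ through the parametrices, then plug into the differential identity of Lemma~\ref{le:DI}, using Lemma~\ref{le:smnorm} to make $R$ negligible. You also correctly spot the cancellation in $-(N+\gamma/2)\partial_\gamma\widehat{\chi}_N/\widehat{\chi}_N - N\partial_\gamma\chi_N/\chi_N$, which is precisely why that combination is $o(1)$ rather than $O(1)$.

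There is, however, a genuine gap in your treatment of the $\kappa$-terms. You claim that because $\partial_\gamma\chi_N/\chi_N = o(1)$, the companion term $Nx\,(\partial_\gamma\chi_N/\chi_N)(\kappa_{N+1}/\chi_{N+1})$ is $o(1)$. This does not follow: the paper establishes $\partial_\gamma\chi_N/\chi_N = -\frac{\gamma+1}{8N}+\mathcal{O}(N^{-3/2+\epsilon})$, so after multiplying by $N$ you obtain $Nx\,(\partial_\gamma\chi_N/\chi_N)(\kappa_{N+1}/\chi_{N+1}) = -\frac{x(\gamma+1)}{8}\cdot\frac{\gamma x}{2}+o(1)$, which is a nonzero $\mathcal{O}(1)$ quantity for $\gamma\neq 0$. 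Similarly, the differential identity contains $\frac{\partial_\gamma\kappa_N}{\chi_N}$, not $\partial_\gamma\frac{\kappa_N}{\chi_N}$; these differ by $\frac{\kappa_N}{\chi_N}\cdot\frac{\partial_\gamma\chi_N}{\chi_N}$, which after multiplication by $Nx$ is also a nonzero $\mathcal{O}(1)$ term. These two $\mathcal{O}(1)$ pieces are what cancel against each other. The correct bookkeeping rewrites
\begin{align*}
Nx\left(\frac{\partial_\gamma\kappa_N}{\chi_N}-\frac{\partial_\gamma\chi_N}{\chi_N}\frac{\kappa_{N+1}}{\chi_{N+1}}\right)
= Nx\,\partial_\gamma\frac{\kappa_N}{\chi_N} + Nx\,\frac{\partial_\gamma\chi_N}{\chi_N}\left(\frac{\kappa_N}{\chi_N}-\frac{\kappa_{N+1}}{\chi_{N+1}}\right),
\end{align*}
and now exploits the fact that both $\kappa_N/\chi_N$ and $\kappa_{N+1}/\chi_{N+1}$ equal $\tfrac{\gamma x}{2}+\mathcal{O}(N^{\mathrm{Re}(\gamma)/2-r-2})$, so their difference, not either term individually, is small; this makes the last bracket genuinely $o(1)$ even after multiplication by $N$. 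Without this rearrangement you would be left with a spurious $\mathcal{O}(1)$ contribution and the $Nx^2/2$ constant would be off by $-\frac{x^2\gamma(\gamma+1)}{16}\cdot 2$. You should repair this step by noting explicitly that it is the difference $\kappa_N/\chi_N - \kappa_{N+1}/\chi_{N+1}$, not $\kappa_{N+1}/\chi_{N+1}$ alone, that is being multiplied by $N\,\partial_\gamma\chi_N/\chi_N$.
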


\begin{proof}
Let $\gamma$ be such that $\Re(\gamma) > -2$ and $D_j^{(N)}(F; \gamma) \ne 0$ for all $j\leq N+1$. If $\Re(\gamma) > 0$ choose a non-negative integer $r$ such that $\Re(\gamma) - r \le \frac{1}{2}$, otherwise set $r = 0$. Such a choice of $r$ satisfies the following inequality:
\begin{align*}
\max\left( \Re(\gamma) - r - 2, \frac{\Re(\gamma)}{2} - r - 2\right) \le -\frac{3}{2}.
\end{align*}

Fix $\epsilon > 0$ small. We start with the terms that require the evaluation of $Y(w)$ at $w = 0 \in \mathrm{Int}(\Sigma)$. In particular, we will first consider the logarithmic derivatives of $\chi$ and $\widehat{\chi}$. We begin by noting that $g(0) = \ell$ and the global parametrix is given by (see \eqref{eq:hbound1})
\begin{align*}
\widehat{P}^{(\infty, r)}_{N+k}(0) = \begin{pmatrix}
-h_r(0; \gamma)  & 1 \\ -1 & 0 \end{pmatrix}
= \begin{pmatrix}
\mathcal{O}(N^{\frac{\Re(\gamma)}{2}-1})  & 1 \\ -1 & 0 \end{pmatrix}.
\end{align*}

\noindent Let us look at the leading coefficients of our orthogonal polynomials: for each $k \in \mathbb{N}$ we have
\begin{align*}
\chi_{N+k}\widehat{\chi}_{N+k} &= - Y_{N+k+1, 21}(0),
\qquad \widehat{\chi}_{N+k} = \chi_{N+k} \frac{\pi \Gamma \left(1+\frac{\gamma}{2}+N+k\right)}{N^{1+\frac{\gamma}{2} + N + k}}.
\end{align*}

\noindent With the error control \eqref{eq:R1} for the $R$, we have
\begin{align*}
Y_{N+k, 21}(0) 
= T_{N+k, 21}(0) 
= \left[R_{N+k}(0)\widehat{P}_{N+k}^{(\infty, r)}(0)\right]_{21}
&= -1 + \mathcal{O}\left(N^{-\frac{3}{2}}\right).
\end{align*}

\noindent Therefore

\begin{align}\label{eq:chiasy1}
\chi_{N+k} = \left(\frac{\pi \Gamma \left(1+ \frac{\gamma}{2}+N + k\right)}{N^{1+\frac{\gamma}{2}+N + k}}\right)^{-\frac{1}{2}} \left(1+  \mathcal{O}\left(N^{-\frac{3}{2}}\right)\right)
\end{align}
\noindent and 

\begin{align}\label{eq:chiasy2}
\widehat{\chi}_{N+k} = \left(\frac{\pi \Gamma \left(1+ \frac{\gamma}{2}+N + k\right)}{N^{1+\frac{\gamma}{2}+N + k}}\right)^{\frac{1}{2}}  \left(1+  \mathcal{O}\left(N^{-\frac{3}{2}}\right)\right).
\end{align}

\noindent For the logarithmic derivatives, we see from \eqref{eq:hbound2} and \eqref{eq:R2} that for any $\epsilon>0$, 
\begin{align*}
\partial_{\gamma} Y_{N+k, 21}(0)
& = \partial_{\gamma} \left[R_{N+k}(0)\widehat{P}_{N+k}^{(\infty, r)}(0)\right]_{21}
= \mathcal{O}\left(N^{-\frac{3}{2}+\epsilon}\right).
\end{align*}

\noindent Recalling the standard asymptotics of the digamma function (which follow from Binet's second formula for the log-Gamma function, see e.g. \cite[Section 12.32]{Whittaker})
\begin{align*}
\frac{\Gamma'(u)}{\Gamma(u)} = \log u - \frac{1}{2u} + \mathcal{O}(u^{-2}), \qquad u \to \infty,
\end{align*}

\noindent we obtain
\begin{align}\label{eq:dchi1}
\frac{\partial_\gamma \chi_N}{\chi_N} &= -\frac{1}{2} \partial_\gamma \log \frac{\Gamma\left(1 + \frac{\gamma}{2}+N\right)}{N^{\frac{\gamma}{2}}} + \frac{1}{2} \frac{\partial_\gamma Y_{N+1, 21}(0)}{Y_{N+1, 21}(0)}\\
\notag &= -\frac{\gamma + 1}{8N} +\mathcal{O}\left(N^{-\frac{3}{2}+\epsilon}\right)
\end{align}
\noindent and

\begin{align}\label{eq:dchi2}
\frac{\partial_\gamma \widehat{\chi}_N}{\widehat{\chi}_N} &= \frac{\gamma + 1}{8N} + \mathcal{O}\left(N^{-\frac{3}{2}+\epsilon}\right).
\end{align}

\noindent In particular,
\begin{align}\label{eq:dia1}
-\left(N + \frac{\gamma}{2}\right) \frac{\partial_\gamma \widehat{\chi}_N}{\widehat{\chi}_N} -N \frac{\partial_\gamma \chi_N}{\chi_N}
& = \mathcal{O}\left(N^{-\frac{1}{2} + \epsilon}\right).
\end{align}

\noindent  Note that these estimates are all uniform in  compact subsets of $\{\gamma \in \mathbb{C}: \Re (\gamma) > -2\}$ (as long as the relevant polynomials exist) and if we choose $\epsilon$ small enough, $\mathcal{O}\left(N^{-\frac{1}{2}+\epsilon}\right)=o(1)$ uniformly in everything relevant.

We now consider the $p_{N+1}(0)\partial_\gamma q_N(0)$-term. Using \eqref{eq:hbound1} we first get
\begin{align}\label{eq:pN10}
\frac{p_{N+1}(0)}{\chi_{N+1}}
&= Y_{N+1,11}(0)\\
\notag & = e^{(N+1)\ell} \left[R_{N+1}(0) \widehat{P}_{N+1}^{(\infty, r)}(0)\right]_{11}\\
\notag & = - e^{(N+1) \ell} \left[h_r(0; \gamma) \left(1 + \mathcal{O}\left(N^{\frac{\Re(\gamma)}{2} - r - 2}\right)\right) + \mathcal{O}\left(N^{\frac{\Re(\gamma )}{2} - r - 2}\right)\right]\\
\notag & = e^{(N+1)\ell}\left[\mathcal{O}(N^{\frac{\Re(\gamma)}{2}-1}) + \mathcal{O}(N^{-\frac{3}{2}})\right].
\end{align}

\noindent Next, we need to evaluate $Y$ at $\infty$, which requires the global parametrix for $w \in \mathrm{Ext}(\Sigma)$:
\begin{align*}
\widehat{P}_{N+k}^{(\infty, r)}(w) = 
\begin{pmatrix}
w^{\frac{\gamma}{2}} (w-x)^{-\frac{\gamma}{2}}
& w^{-\frac{\gamma}{2}} (w-x)^{\frac{\gamma}{2}} h_r(w; \gamma)\\
0 & w^{-\frac{\gamma}{2}} (w-x)^{\frac{\gamma}{2}}
\end{pmatrix}.
\end{align*}

\noindent Using the asymptotics of $R$, namely \eqref{eq:R1}, one has
\begin{align*}
q_N(0) 
&= -\frac{1}{\chi_N} \lim_{w \to \infty} w^{-N}Y_{N+1, 21}(w)\\
& = -\frac{1}{\chi_N} e^{-(N+1)\ell} \lim_{w \to \infty} w\left[R_{N+1}(w)\widehat{P}_{N+1}^{(\infty, r)}(w)\right]_{21}\\
& = -\frac{1}{\chi_N} e^{-(N+1)\ell} \lim_{w\to\infty} wR_{N+1, 21}(w)\\
&=  -\frac{1}{\chi_N} e^{-(N+1)\ell}  \mathcal{O}\left(N^{\frac{\Re(\gamma)}{2} - r - 2}\right).
\end{align*}

\noindent Similarly for the derivative term we find from \eqref{eq:R2}
\begin{align*}
\partial_{\gamma} q_N(0)
&= -\frac{\partial_\gamma \chi_N}{\chi_N}q_N(0) - \frac{1}{\chi_N} e^{-(N+1)\ell} \partial_\gamma  \lim_{w \to \infty} wR_{N+1, 21}(w)\\
& = -\frac{1}{\chi_N} e^{-(N+1)\ell}  \mathcal{O}\left(N^{\frac{\Re(\gamma)}{2} - r - 2+\epsilon}\right).
\end{align*}

\noindent Finally combining this with \eqref{eq:pN10}, \eqref{eq:chiasy1}, and \eqref{eq:chiasy2}  yields the asymptotics of the relevant term:
\begin{align} \label{eq:dia2}
&  -Nx\frac{p_{N+1}(0)}{{\chi}_{N+1}}\frac{\partial_\gamma q_N(0)}{\widehat{\chi}_N} 
= \mathcal{O}\left(N^{-\frac{3}{2}+\epsilon}\right),
\end{align}

\noindent which again under our assumptions is $o(1)$ uniformly in everything relevant.

We now move onto the $\kappa$-terms: we find by the definition of $\kappa$ and $Y$ (along with \eqref{eq:R1}) that
\begin{align*}
\frac{\kappa_{N+k}}{\chi_{N+k}} 
&= \lim_{w \to \infty} w^{-N-k+1} (Y_{N+k, 11}(w) - w^{N+k})\\
& = \lim_{w \to \infty} w\left(T_{N+k, 11}(w) - 1\right)\\
&= \lim_{w \to \infty} w\left(\left[R_{N+k}(w)\widehat{P}_{N+k}^{(\infty, r)}(w)\right]_{11}  - 1\right)\\
&= \frac{\gamma}{2}x + \mathcal{O}\left(N^{\frac{\Re(\gamma)}{2} - r - 2}\right).
\end{align*}

\noindent Similarly from \eqref{eq:R2}, we see that 
\begin{align*}
\frac{\partial_\gamma \kappa_{N+k}}{\chi_{N+k}}
& =  \partial_{\gamma} \frac{\kappa_{N+k}}{\chi_{N+k}} + \frac{\kappa_{N+k}}{\chi_{N+k}} \frac{\partial_{\gamma} \chi_{N+k}}{\chi_{N+k}}\\
& = \partial_\gamma \lim_{w \to \infty} w \left(\left[ R_{N+k}(w) \widehat{P}_{N+k}^{(\infty, r)}(w)\right]_{11}-1\right) + \frac{\kappa_{N+k}}{\chi_{N+k}} \frac{\partial_{\gamma} \chi_{N+k}}{\chi_{N+k}}\\
& = \frac{1}{2}x + \frac{\kappa_{N+k}}{\chi_{N+k}} \frac{\partial_{\gamma} \chi_{N+k}}{\chi_{N+k}} +  \mathcal{O}\left(N^{\frac{\Re(\gamma)}{2} - r - 2 +\epsilon}\right).
\end{align*}

\noindent Therefore, we have
\begin{align} \label{eq:dia3}
Nx \left( \frac{\partial_\gamma \kappa_N}{\chi_N} - \frac{\partial_\gamma \chi_N}{\chi_N} \frac{\kappa_{N+1}}{\chi_{N+1}}\right)
& =Nx\left(\frac{x}{2}+\frac{\partial_\gamma\chi_N}{\chi_N}\left(\frac{\kappa_N}{\chi_N}-\frac{\kappa_{N+1}}{\chi_{N+1}}\right)+\mathcal{O}\left(N^{\frac{\Re(\gamma)}{2} - r - 2 +\epsilon}\right)\right)\\
\notag &=\frac{Nx^2}{2} +  \mathcal{O}\left(N^{-\frac{1}{2} +\epsilon}\right).
\end{align}

The remaining terms in the differential identity (those involving the Cauchy-transforms) require $Y$ near the singularity $w=x$ and hence the local parametrix. For $w \in \mathrm{Int}(\Sigma) \setminus [0,x]$,
\begin{align*}
 Y_{N+k}(w) & = e^{(N+k)\frac{\ell}{2}\sigma_3} R_{N+k}(w) P_{N+k}^{(x, r)}(w) \begin{pmatrix}
1 & 0 \\
w^{\frac{\gamma}{2}}(w-x)^{-\frac{\gamma}{2}} e^{-kxw} e^{-(N+k)(xw + \ell - \log w )} & 1 \end{pmatrix} \\
&\quad \times  e^{(N+k)xw\sigma_3} e^{(N+k)\frac{\ell}{2} \sigma_3}.
\end{align*}

\noindent A straightforward computation shows that
\begin{align*}
&P_{N+k}^{(x, r)}(w) \begin{pmatrix}
1 & 0 \\
w^{\frac{\gamma}{2}}(w-x)^{-\frac{\gamma}{2}} e^{-kxw} e^{-(N+k)(xw + \ell - \log w )} & 1 \end{pmatrix}
= \begin{pmatrix}
\widetilde{P}_{N+k}(w) & e^{kxw} \\
-e^{-kxw} & 0 
\end{pmatrix}
\end{align*}

\noindent where
\begin{align*}
\widetilde{P}_{N+k}(w) 
& = - e^{-kxw}[h_r(w; \gamma) + Q_r(w)] + w^{\frac{\gamma}{2}}(w-x)^{-\frac{\gamma}{2}} e^{\zeta(w)}\\
& = w^{\frac{\gamma}{2}}(w-x)^{-\frac{\gamma}{2}} e^{\zeta(w)} \left(1 - \frac{\Gamma(\frac{\gamma}{2}, \zeta(w))}{\Gamma(\frac{\gamma}{2})}\right) \\
& \qquad -e^{-kxw} \left[ h_r(w; \gamma) - e^{kxw} w^{\frac{\gamma}{2}}(w-x)^{-\frac{\gamma}{2}} \zeta(w)^{\frac{\gamma}{2}} \sum_{j=0}^{r} \frac{1}{\Gamma(\frac{\gamma}{2} - j)} \zeta(w)^{-j-1}\right]\\
& \xrightarrow{w \to x} \frac{(N+k)^{\frac{\gamma}{2}}(1-x^2)^{\frac{\gamma}{2}}}{\Gamma(1+\frac{\gamma}{2})} + \mathcal{O}\left(N^{\frac{\Re(\gamma)}{2}-1}\right)\\
& = \mathcal{O}\left(N^{\frac{\Re(\gamma)}{2}}\right)
\end{align*}

\noindent where we used \eqref{eq:hbound3} and \eqref{eq:zetaseries}. Taking the limit $w \to x$ inside the set $\mathrm{Int}(\Sigma) \setminus [0,x]$, we get (from the above bound on $\widetilde{P}$ as well as \eqref{eq:R1}):
\begin{align*}
& \lim_{w \to x} Y_{N+k}(w)\\
& \qquad =\begin{pmatrix}
x^{N+k} \left[R_{N+k, 11}(x) \lim_{w \to x} \widetilde{P}_{N+k}(w) - R_{N+k, 12}(x) e^{-kx^2}\right]&
e^{-Nx^2} R_{N+k, 11}(x)\\
e^{(N+k)x^2}\left[R_{N+k, 21}(x) \lim_{w \to x} \widetilde{P}_{N+k}(x) - R_{N+k, 22}(x) e^{-kx^2}\right] &
x^{-(N+k)}R_{N+k, 21}(x) e^{kx^2}
\end{pmatrix}\\
& \qquad =\begin{pmatrix}
\mathcal{O}\left(N^{\frac{\Re(\gamma)}{2}}\right)&
e^{-Nx^2}\left(1+\mathcal{O}\left(N^{\frac{\Re(\gamma)}{2} - r - 2}\right)\right)\\
-e^{Nx^2}\left[1 + \mathcal{O}(N^{\frac{\Re(\gamma)}{2} - r - 2}) + \mathcal{O}\left(N^{\Re(\gamma) - r - 2}\right) \right] &
x^{-(N+k)}e^{kx^2}\mathcal{O}\left(N^{\frac{\Re(\gamma)}{2} - r - 2}\right)
\end{pmatrix}.
\end{align*}

\noindent We immediately see that
\begin{align}\label{eq:cauchyp}
\lim_{z \to x} \oint_{\Sigma}p_N(w)w^{-N+1}\frac{f(w)}{w-z}\frac{dw}{2\pi i w}
& = \chi_N \lim_{z \to x} Y_{N, 12}(z)
= \chi_N e^{-Nx^2}\left(1+\mathcal{O}\left(N^{-\frac{3}{2}}\right)\right).
\end{align}

\noindent A similar argument (using \eqref{eq:R2}, using Lemma \ref{le:h} for the asymptotics of $\partial_\gamma \widetilde{P}$, and \eqref{eq:dchi1}) shows that 
\begin{align}\label{eq:dq}
x^N \frac{\partial q_N(x^{-1})}{\partial\gamma}
= -\frac{\partial}{\partial {\gamma}} \left[\frac{1}{\chi_N} Y_{N+1, 21}(x)\right]
& = -\frac{1}{\chi_N} \partial_{\gamma}Y_{N+1, 21}(x) + \frac{\partial_{\gamma}\chi_N}{\chi_N} \frac{1}{\chi_N} Y_{N+1, 21}(x)\\
&\notag = \frac{e^{Nx^2}}{\chi_N}\mathcal{O}(N^{-\frac{3}{2} + \epsilon}).
\end{align}

\noindent Combining with \eqref{eq:cauchyp}, we find (again with the required uniformity)
\begin{align} \label{eq:dia4}
\frac{\gamma}{2}x^N \frac{\partial q_N(x^{-1})}{\partial \gamma}\lim_{z \to x} \oint_{\Sigma}p_N(w)w^{-N+1}\frac{f(w)}{w-z}\frac{dw}{2\pi i w}
& = \mathcal{O}(N^{-\frac{3}{2} + \epsilon}).
\end{align}

Similarly, we have (again for $z\in\mathrm{Int}(\Sigma)\setminus[0,x]$)
\begin{gather*}
\lim_{z \to x} \oint_\Sigma \frac{q_N(w^{-1})f(w)}{w-z}\frac{dw}{2\pi i w}
= -\frac{1}{\chi_N} \lim_{z \to x} Y_{N+1, 22}(z)
= -\frac{1}{\chi_N} x^{-(N+1)} \mathcal{O}(N^{\frac{\Re(\gamma)}{2} - r - 2}), \\
\frac{\partial p_N(x)}{\partial \gamma}= \frac{\partial}{\partial \gamma}\left[\chi_N Y_{N, 11}(x)\right]= \chi_N \partial_\gamma Y_{N, 11}(x) + \frac{\partial \chi_N}{\chi_N}\chi_N Y_{N, 11}(x)= \chi_N x^{N+1}\mathcal{O}(N^{\frac{\Re(\gamma)}{2}+\epsilon}),
\end{gather*}

\noindent which implies
\begin{align} \label{eq:dia5}
-\frac{\gamma}{2}\frac{\partial p_N(x)}{\partial \gamma}\oint_\Sigma \frac{q_N(w^{-1})f(w)}{w-x}\frac{dw}{2\pi i w}
& =\mathcal{O}(N^{\frac{\Re(\gamma)}{2} - r - 2+\epsilon}) = \mathcal{O}(N^{-\frac{3}{2} + \epsilon}).
\end{align}

Finally our lemma follows by substituting \eqref{eq:dia1}, \eqref{eq:dia2}, \eqref{eq:dia3}, \eqref{eq:dia4} and \eqref{eq:dia5} into the differential identity in Lemma \ref{le:DI}. As mentioned, the $o(1)$ error is uniform in compact subsets of $\Re(\gamma) > -2$ if we take $\epsilon$ small enough. The uniformity in $x$ follows from the corresponding uniformity in $x$ in our asymptotic estimates for $R$.

\end{proof}

\begin{proof}[Proof of Theorem \ref{th:main}]
Consider now some $\gamma\in \C$, which may depend on $N$ but is within a fixed compact subset of $\lbrace \gamma\in \C:\mathrm{Re}(\gamma)>-2\rbrace$. We wish to write $\log D_{N-1}(F;\gamma)=\log D_{N-1}(F;0)+ \int_0^\gamma \partial_s D_{N-1}(F;s)ds$ along some suitable integration contour in the complex plane, and use Lemma \ref{le:diasymp} to estimate this integral. The issue being that we need to be able to ensure the condition $D_j^{(N)}(F;\gamma)\neq 0$ for $j\leq N+1$ throughout the whole contour (or say apart from a finite number of points of it). To ensure this, note that from the determinantal representation

$$
D_j^{(N)}(F;\gamma)=\det \left(\int_\C w^k\overline{w}^l |w-x|^\gamma e^{-N|w|^2}d^2w\right)_{k,l=0}^j 
$$

\noindent $\gamma \mapsto D_j^{(N)}(F;\gamma)$ is analytic for each $j$ and from e.g. a variant of Lemma \ref{le:heine}, one can see that this is a non-trivial analytic function. Thus in any compact subset of $\lbrace \gamma\in \C:\mathrm{Re}(\gamma)>-2\rbrace$, $\gamma\mapsto D_{j}^{(N)}(F;\gamma)$ has only finitely many zeroes and in any such compact set, there are only finitely many points $\gamma$ at which even one of the $D_j^{(N)}(F;\gamma)$ (for $j\leq N+1$) vanishes. In particular, for any $\gamma\in \C$ which is within some fixed compact subset of $\lbrace \gamma\in \C:\mathrm{Re}(\gamma)>-2\rbrace$, we have for any smooth simple contour from $0$ to $\gamma$ such that, we have $D_{j}^{(N)}(F;s)\neq 0$  for all $j\leq N+1$ for all but finitely many points on the contour. 

Let us assume further that $\gamma$ is such that $D_{j}^{(N)}(F;\gamma)\neq 0$ for all $j\leq N+1$. Then from Lemma \ref{le:diasymp} we see that when integrating along the straight line from $0$ to $\gamma$, 

\begin{align*}
\log D_{N-1}(F;\gamma)&=\log D_{N-1}(F;0)+\int_0^\gamma \left(\frac{Nx^2}{2}+\partial_s \sum_{j=0}^{N-1}\log \frac{\Gamma(\frac{s}{2}+j+1)}{N^{s/2}} +o(1)\right)ds\\
&=\log D_{N-1}(F;0)+N\gamma\frac{x^2}{2}+\sum_{j=0}^{N-1}\left(\log\frac{\Gamma(\frac{\gamma}{2}+j+1)}{N^{\gamma/2}}-\log \Gamma(j+1)\right)+o(1),
\end{align*}

\noindent where  we have made critical use of the uniformity in Lemma \ref{le:diasymp}. Now given that $G(u+1) = \Gamma(u) G(u)$ and $G(1) = 1$, we see that

\begin{align*}
\sum_{j=0}^{N-1} \log \frac{\Gamma(\frac{\gamma}{2}+j+1)}{N^{\frac{\gamma}{2}}\Gamma(j+1)}
& = \log \frac{G(\frac{\gamma}{2} + N + 1)}{G(1+\frac{\gamma}{2})G(N+1)} - \frac{N\gamma}{2} \log N.
\end{align*}

Let us recall the asymptotics for the logarithm of Barnes G-function (see e.g. \cite[Theorem 1 and Theorem 2]{FL}):
\begin{align*}
\log G(u+1) = \frac{1}{12} - \log A + \frac{u}{2} \log 2\pi + \left(\frac{u^2}{2} - \frac{1}{12}\right)\log u - \frac{3u^2}{4} + O(u^{-2})
\end{align*}

\noindent where $A$ is the Glaisher-Kinkelin constant. In particular

\begin{align*}
\log \frac{G(\frac{\gamma}{2}+N+1)}{G(N+1)}&= \frac{\gamma}{4} \log 2\pi + \left(\frac{(N+\frac{\gamma}{2})^2}{2} - \frac{N^2}{2}\right)\log N\\
& \qquad + \left[\frac{(N+\frac{\gamma}{2})^2}{2}-\frac{1}{12}\right] \log \left(1+\frac{\gamma}{2N}\right)
-\frac{3}{4}\left[(N+\frac{\gamma}{2})^2 - N^2\right]\\
& = \frac{\gamma}{4}\log 2 \pi + \left[\frac{N\gamma}{2} + \frac{\gamma^2}{8}\right] \log N - \frac{N\gamma}{2} + o(1).
\end{align*}

\noindent Therefore
\begin{align*}
\log \frac{D_{N-1}(F; \gamma)}{D_{N-1}(F; 0)}
= \frac{N \gamma}{2}(x^2-1) + \frac{\gamma}{4} \log 2\pi + \frac{\gamma^2}{8} \log N -\log G(1+\frac{\gamma}{2}) + o(1)
\end{align*}

\noindent where the error is still uniform in $\gamma$ in compact subsets $\lbrace \gamma\in \C:\mathrm{Re}(\gamma)>-2\rbrace$ assuming that $D_{j}^{(N)}(F,\gamma)\neq 0$ for all $j\leq N+1$.  Now by this uniformity, if $\gamma_0$ is such that $D_j^{(N)}(F;\gamma_0)=0$ for some $j\leq N+1$, and the fact (that we already noted) that by analyticity, the zeroes of $\gamma \mapsto D_j^{(N)}(F;\gamma)$ are isolated so by continuity of $\gamma\mapsto D_{N-1}(F;\gamma)$, we can simply let $\gamma\to \gamma_0$ in the above formula, and we see that the above formula is true (with uniform error bounds) for all $\gamma$ in a compact subset of $\lbrace \gamma:\mathrm{Re}(\gamma)>-2\rbrace$. 

To conclude, recall that we already argued that by rotation invariance of the law of the eigenvalues, $\E|\det(G_N-z)|^\gamma=\E|\det(G_N-x)|^\gamma$ for $|z|=x$, so by Lemma \ref{le:heine} (applied to the function $F(z)=1$ which corresponds to $\gamma=0$) we arrive at
\begin{align} \label{eq:result}
\mathbb{E}|\det (G_N - z)|^{\gamma}
\notag & = \frac{N!}{Z_N}D_{N-1}(F; 0) \frac{D_{N-1}(F; \gamma)}{D_{N-1}(F; 0)}\\
& =  N^{\frac{\gamma^2}{8}} e^{\frac{N\gamma}{2}(|z|^2-1)} \frac{(2\pi)^{\frac{\gamma}{4}}}{G(1+\frac{\gamma}{2})}(1+o(1))
\end{align}

\noindent with the required uniformity.
\end{proof}

\appendix

\section{Orthogonal polynomials -- Proofs for the results in Section \ref{sec:ginortho}}\label{app:ortho}
In this appendix we prove Lemma \ref{le:ortoc} and Lemma \ref{le:ortoq}. We begin with our proof of Lemma \ref{le:ortoc}, which is essentially that of \cite[the proof of Lemma 3.1]{BBLM}.

\begin{proof}[Proof of Lemma \ref{le:ortoc}]
For $w\in \C\setminus (-\infty,x]$, let 

\begin{equation*}
h(w)=(w-x)^{\frac{\gamma}{2}}\int_x^{\overline{w}}(s-x)^{\frac{\gamma}{2}+k} e^{-Nw s}ds, 
\end{equation*}

\noindent where the roots are according to the principal branch, and the integration contour does not intersect $(-\infty,x)$. One has 

\begin{equation*}
\frac{\partial}{\partial \overline{w}}h(w)=|w-x|^\gamma (\overline{w}-x)^k e^{-N|w|^2},
\end{equation*}

\noindent so we see by \eqref{eq:ortop} (under our assumption of $D_{j-1}^{(N)}(F),D_j^{(N)}(F)\neq 0$ which implies the existence of $p_j$) and Green's theorem\footnote{One can check that in the definition of $h$ (and similarly its derivatives), the jumps along $(-\infty,x)$, coming from the roots, cancel so the partial derivatives of $h$ are continuous apart from possibly at $w=x$. Here the possible singularity of $\partial_{\overline{w}}h$ is still  integrable in the plane (as we assume $\mathrm{Re}(\gamma)>-2$) so one can justify the use of Green's theorem with a simple limiting argument.}  that for $k\leq j$

\begin{align*}
\frac{1}{\chi_j}\delta_{j,k}&=\int_{\C}p_j(w)\overline{w}^k |w-x|^\gamma e^{-N|w|^2}d^2 w\\
&=\int_{\C}p_j(w)(\overline{w}-x)^k |w-x|^\gamma e^{-N|w|^2}d^2 w\\
&=\lim_{r\to\infty}\int_{|w|\leq r}\frac{\partial }{\partial \overline{w}}\left[p_j(w)h(w)\right]d^2 w\\
&=\lim_{r\to\infty}\frac{1}{2i}\oint_{|w|=r}p_j(w)h(w)dw.
\end{align*}

We now wish to deform the $\lbrace |w|=r\rbrace$ contour into $\Sigma$. To do this, we note that for $|w|=r$

\begin{align*}
h(w)=(w-x)^{\frac{\gamma}{2}}\left(\int_x^{\overline{w}\times \infty}(s-x)^{\frac{\gamma}{2}+k}e^{-Nws}ds-\int_{\overline{w}}^{\overline{w}\times \infty}(s-x)^{\frac{\gamma}{2}+k}e^{-Nws}ds\right),
\end{align*}

\noindent where again we take the contours to not intersect $(-\infty,x)$. The second integral is easily seen to be $\mathcal{O}(e^{-\frac{1}{2}|r|^2 N})$ uniformly on $\lbrace |w|=r\rbrace$. For the first integral we note that 

\begin{align*}
(w-x)^{\frac{\gamma}{2}}\int_x^{\overline{w}\times \infty}(s-x)^{\frac{\gamma}{2}+k}e^{-Nws}ds&=N^{-\frac{\gamma}{2}-k-1} w^{-k-1} w^{-\frac{\gamma}{2}}(w-x)^{\frac{\gamma}{2}} e^{-Nxw}\Gamma\left(\frac{\gamma}{2}+k+1\right)\\
&=\frac{\pi \Gamma\left(\frac{\gamma}{2}+k+1\right)}{N^{\frac{\gamma}{2}+k+1}}  w^{-k} \frac{f(w)}{\pi w}
\end{align*}

\noindent which is an analytic function of $w$ in $\C\setminus [0,x]$. We thus see by contour deformation and our bound on the second integral that if $\Sigma$ is a simple closed contour encircling $[0,x]$ (not passing through any point of the interval) 
\begin{align*}
\frac{1}{\chi_j}\delta_{j,k}=\frac{\pi \Gamma\left(\frac{\gamma}{2}+k+1\right)}{N^{\frac{\gamma}{2}+k+1}} \oint_{\Sigma} p_j(w)w^{-k}f(w)\frac{dw}{2\pi i w},
\end{align*}

\noindent which was precisely the claim. The only remaining issue is to consider the case where $\Sigma$ passes through $x$. Let $\epsilon>0$ and let $\Sigma_\epsilon$ be an indentation of $\Sigma$ at $x$ such that $\Sigma_\epsilon$ does not pass through $x$ nor any other point of $[0,x]$. We then have 

\begin{equation*}
\oint_{\Sigma} p_j(w) w^{-k}f(w)\frac{dw}{2\pi i w}=\oint_{\Sigma_\epsilon}p_j(w)w^{-k}f(w)\frac{dw}{2\pi i w}+\oint_{C_\epsilon}p_j(w)w^{-k}f(w)\frac{dw}{2\pi i w},
\end{equation*}

\noindent where $C_\epsilon=(\Sigma\setminus \Sigma_\epsilon)\cup (\Sigma_\epsilon\setminus \Sigma)$ with a suitable orientation. The first integral here is precisely what we want the left hand side to be for each $\epsilon>0$ and since  the possible singularity of $f$ at $x$ is integrable,  we see that as $\epsilon\to 0$, the second integral vanishes. This concludes the proof.
\end{proof}

We now turn to the proof of Lemma \ref{le:ortoq}.

\begin{proof}[Proof of Lemma \ref{le:ortoq}]
We begin by noting that with a simple modification of the argument of the proof of Lemma \ref{le:ortoc}, one finds that 
\begin{align}\label{eq:moms}
\int_\C w^j\overline{w}^k F(w)e^{-N|w|^2}d^2w=\sum_{l=0}^k {{k}\choose{l}}x^{k-l}\frac{\pi \Gamma(1+\frac{\gamma}{2}+l)}{N^{1+\frac{\gamma}{2}+l}}\oint_{\Sigma}w^{-(l-j)}f(w)\frac{dw}{2\pi i w}.
\end{align}

Now if we write $M$ for the moment matrix $M_{jk}=\int_\C w^j\overline{w}^k F(w)e^{-N|w|^2}d^2w$, $M'$ for the moment matrix $M'_{lj}=\oint_{\Sigma}w^{-(l-j)}f(w)\frac{dw}{2\pi i w}$ and $T$ for the upper triangular matrix $T_{lk}={{k}\choose{l}}x^{k-l}\frac{\pi \Gamma(1+\frac{\gamma}{2}+l)}{N^{1+\frac{\gamma}{2}+l}}\mathbf{1}\lbrace l\leq k\rbrace$, then \eqref{eq:moms} can be written as $M=TM'$. Taking the determinant of this identity and using the fact that $\det T$ is the product of the diagonal elements of $T$ yields \eqref{eq:Dhat}.

To prove \eqref{eq:qporto}, let us first notice that from \eqref{eq:Dhat}, $q_j$ exists under our assumptions. For $k<j$, we note that \eqref{eq:qporto} again follows from noting that the linearity of the determinant implies that the determinantal representation of $\oint_\Sigma w^k q_j(w^{-1})f(w)\frac{dw}{2\pi i w}$ has two identical columns and thus vanishes. For $k=j$, we see again by linearity of the determinant and \eqref{eq:Dhat} that 

\begin{align*}
\oint_\Sigma w^j q_j(w^{-1})f(w)\frac{dw}{2\pi i w}=\frac{\prod_{k=0}^j \frac{\pi \Gamma\left(\frac{\gamma}{2}+k+1\right)}{N^{\frac{\gamma}{2}+k+1}}}{\sqrt{D_{j-1}^{(N)}(F)D_j^{(N)}(F)}}\widehat{D}_j=\frac{D_j^{(N)}(F)}{\sqrt{D_{j-1}^{(N)}(F)D_j^{(N)}(F)}}=\frac{1}{\chi_j}.
\end{align*}

Finally for \eqref{eq:chihat}, we note that from \eqref{eq:qdef} and \eqref{eq:Dhat}

\begin{align*}
\widehat{\chi}_j=\frac{\prod_{k=0}^j \frac{\pi \Gamma(\frac{\gamma}{2}+k+1)}{N^{\frac{\gamma}{2}+k+1}}}{\sqrt{D_{j-1}^{(N)}(F)D_j^{(N)}(F)}}\widehat{D}_{j-1}=\frac{\pi \Gamma(\frac{\gamma}{2}+j+1)}{N^{\frac{\gamma}{2}+j+1}}\frac{D_{j-1}^{(N)}(F)}{\sqrt{D_{j-1}^{(N)}(F)D_j^{(N)}(F)}}=\frac{\pi \Gamma(\frac{\gamma}{2}+j+1)}{N^{\frac{\gamma}{2}+j+1}}\chi_j.
\end{align*}

\end{proof}

\section{Proof of the differential identity}\label{app:RHP}

In this appendix we prove our differential identity -- Lemma \ref{le:DI}. To prove it, we need to recall suitable recursion relations for the polynomials as well as the Christoffel-Darboux identity for the polynomials $p$ and $q$. While these are standard results and the proofs we present below are trivial modifications of those in \cite[Section 2]{DIK1}, there are some cosmetic differences due to the fact that $\chi_j\neq \widehat{\chi}_j$, so we choose to present a proof here. We start with some recurrence relations for the polynomials -- this is very similar to \cite[Lemma 2.2]{DIK1}.

\begin{lemma}\label{le:rec}
Fix a positive integer $n$ and assume that $D_j^{(N)}(F)\neq 0$ for all $j\leq n+1$ $($so that $(p_j)_{j=0}^{n+1}$ and $(q_j)_{j=0}^{n+1}$ exist and each form a basis for the space of polynomials of degree at most $n+1)$. Then the  following identities hold:
\begin{align}
\label{eq:rec1}
\widehat{\chi}_n w p_n(w) &= \widehat{\chi}_{n+1} p_{n+1}(w) - p_{n+1}(0) w^{n+1} q_{n+1}(w^{-1}), \\
\label{eq:rec2}
\chi_n w^{-1} q_n(w^{-1}) &= \chi_{n+1} q_{n+1}(w^{-1}) - q_{n+1}(0) w^{-n-1} p_{n+1}(w),\\
\label{eq:rec3}
\widehat{\chi}_{n+1}w^{-1}q_n(w^{-1}) & = \widehat{\chi}_n q_{n+1}(w^{-1}) - q_{n+1}(0) \frac{\widehat{\chi}_n}{\chi_n}w^{-n} p_n(w), \\
\label{eq:rec4}
\chi_n \widehat{\chi}_n& = \chi_{n+1}\widehat{\chi}_{n+1} - p_{n+1}(0)q_{n+1}(0).
\end{align}
\end{lemma}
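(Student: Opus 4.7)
My plan is to prove each of the four identities by constructing, for each, a polynomial (in $w$ for \eqref{eq:rec1}, in $w^{-1}$ for \eqref{eq:rec2} and \eqref{eq:rec3}) which the identity asserts to be zero, and then forcing it to vanish using the non-degeneracy of the bilinear pairing $\langle P, Q\rangle_f := \oint_\Sigma P(w)Q(w)f(w)\frac{dw}{2\pi i w}$. Restricted to pairs of polynomials in $w$ and $w^{-1}$ of degree at most $m$, this pairing has Gram matrix (in the monomial bases) with determinant equal to $\widehat{D}_m$ from \eqref{eq:Dhat}, which is nonzero for all $m\leq n+1$ under the standing hypothesis. The two ingredients to combine are the orthogonalities already established: the reformulation $\langle p_j, w^{-k}\rangle_f = \delta_{jk}/\widehat{\chi}_j$ for $0\leq k\leq j$ of Lemma \ref{le:ortoc} via \eqref{eq:chihat}, and the dual orthogonality $\langle w^k, q_j(w^{-1})\rangle_f = \delta_{jk}/\chi_k$ for $0\leq k\leq j$ from Lemma \ref{le:ortoq}.

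For \eqref{eq:rec1}, I will set $T(w) := \widehat{\chi}_n wp_n(w) - \widehat{\chi}_{n+1}p_{n+1}(w) + p_{n+1}(0)w^{n+1}q_{n+1}(w^{-1})$. A small calculation shows that $w^{n+1}q_{n+1}(w^{-1})$ is a polynomial in $w$ of degree at most $n+1$, with constant term $\widehat{\chi}_{n+1}$ and leading coefficient $q_{n+1}(0)$, so $T$ has degree at most $n+1$. First I will check $T(0)=0$ (the $\widehat{\chi}_{n+1}p_{n+1}(0)$ contributions from the second and third terms cancel), giving $T(w) = wT'(w)$ with $\deg T' \leq n$. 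Then I will compute $\langle T, w^{-k}\rangle_f$ for $k = 1,\ldots, n+1$ using the two orthogonalities together with the index-shifts $\langle wp_n, w^{-k}\rangle_f = \langle p_n, w^{-(k-1)}\rangle_f$ and $\langle w^{n+1}q_{n+1}(w^{-1}), w^{-k}\rangle_f = \langle w^{n+1-k}, q_{n+1}(w^{-1})\rangle_f$; the three Kronecker-delta contributions cancel for every $k$ in the range. Therefore $\langle T', w^{-j}\rangle_f = 0$ for $j = 0,\ldots,n$, and non-degeneracy at level $n$ forces $T' = 0$, hence $T = 0$. Identity \eqref{eq:rec4} then drops out by comparing coefficients of $w^{n+1}$ on both sides of \eqref{eq:rec1}.

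The proofs of \eqref{eq:rec2} and \eqref{eq:rec3} run on the same template with the roles of $w$ and $w^{-1}$ interchanged. I will define the analogues $T_2(w) := \chi_n w^{-1}q_n(w^{-1}) - \chi_{n+1}q_{n+1}(w^{-1}) + q_{n+1}(0)w^{-n-1}p_{n+1}(w)$ and $T_3(w) := \widehat{\chi}_{n+1}w^{-1}q_n(w^{-1}) - \widehat{\chi}_n q_{n+1}(w^{-1}) + q_{n+1}(0)(\widehat{\chi}_n/\chi_n)w^{-n}p_n(w)$, both lying in the span of $w^0, w^{-1},\ldots,w^{-(n+1)}$. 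For $T_2$, the value at $w=\infty$ vanishes by inspection, giving $T_2 = w^{-1}S(w^{-1})$ with $\deg_{w^{-1}} S\leq n$, after which checking $\langle w^k, T_2\rangle_f = 0$ for $k = 1,\ldots, n+1$ closes the argument. For $T_3$, pleasingly both the $w^0$ and $w^{-(n+1)}$ coefficients vanish automatically, so only the $n$ pairings $\langle w^k, T_3\rangle_f$ for $k = 1,\ldots, n$ need to be checked.

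The only step demanding any real care is the bookkeeping forced by the fact that $\chi_j$ and $\widehat{\chi}_j$ are distinct normalizations here (they coincide in the OPUC setting of \cite[Section 2]{DIK1}, which is the source of the cosmetic differences from that reference) and that the pairing carries $dw/(2\pi iw)$ rather than $dw/(2\pi i)$, so shifting a factor of $w$ between the two arguments re-indexes which orthogonality one invokes. With this bookkeeping in hand, all four identities reduce to the same mechanical verification, entirely within elementary linear algebra on the finite-dimensional polynomial spaces.
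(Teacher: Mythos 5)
Your proof is correct and is essentially the paper's own argument: the paper defines $g(w) = p_n(w) - aw^{-1}p_{n+1}(w) - bw^n q_{n+1}(w^{-1})$ and shows that (after choosing $b$ to kill the $w^{-1}$ coefficient and $a$ to kill the pairings) $g=0$, which is exactly your $T = \widehat{\chi}_n w g$ up to scaling and the replacement of the test family $\{q_l(w^{-1})\}_{l\le n}$ by the equivalent monomial family $\{w^{-j}\}_{j\le n}$. The only other cosmetic difference is that you prove \eqref{eq:rec3} directly rather than combining \eqref{eq:rec1} and \eqref{eq:rec2}, as the paper does; both routes are equally short.
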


\begin{proof}
Let 
\begin{align*}
g(w) := p_n(w) - aw^{-1} p_{n+1}(w) - b w^n q_{n+1}(w^{-1}).
\end{align*}

\noindent We want to choose $a$ and $b$ so that $g$ vanishes. We first show that with a good choice of $b$, $g$ is actually a polynomial in $w$ so that we can express it in terms of the polynomials $p_k$ (with $k\leq n$). We then show that by choosing $a$ the correct way, the coefficients of $p_k$ vanish for all $k\leq n$. 

We thus begin by making sure that the term of order $w^{-1}$ vanishes (there are no lower order terms in $g$). For this, we note that the coefficient of $w^{-1}$ in $g(w)$ is $-ap_{n+1}(0)-b\widehat{\chi}_{n+1}$, so we choose $b = -\frac{ap_{n+1}(0)}{\widehat{\chi}_{n+1}}$. Thus $g$ is a polynomial in $w$ and its degree is at most $n$. To expand it in the basis $(p_k)$,  we know from \eqref{eq:qporto} that is enough to evaluate $\oint_{\Sigma} g(w) q_l(w^{-1})f(w)\frac{dw}{2\pi i w}$ for $l\leq n$. We have from \eqref{eq:qporto}
\begin{itemize}
\item $\displaystyle \oint_\Sigma p_n(w) q_l(w^{-1})f(w)\frac{dw}{2\pi i w} = \delta_{l, n}$.
\item $\displaystyle \oint_\Sigma w^{-1}p_{n+1}(w) q_l(w^{-1})f(w)\frac{dw}{2\pi i w} = \delta_{l, n} \frac{\widehat{\chi}_{n}}{\widehat{\chi}_{n+1}}$.
\item $\displaystyle \oint_\Sigma w^n q_{n+1}(w^{-1}) q_l(w^{-1})f(w)\frac{dw}{2\pi i w} = 0$.
\end{itemize}

\noindent Therefore if we choose $a = \frac{\widehat{\chi}_{n+1}}{\widehat{\chi}_{n}}$, we see that for all $l\leq n$, $\oint_{\Sigma} g(w) q_l(w^{-1})f(w)\frac{dw}{2\pi i w}=0$ implying that $g(w)=0$ for all $w$. This gives \eqref{eq:rec1}. The proof of \eqref{eq:rec2} is similar, and one can obtain \eqref{eq:rec3} by combining the first two recurrence relations. To obtain \eqref{eq:rec4} one inspects the coefficient of $w^{n+1}$ in \eqref{eq:rec1}.
\end{proof}

This lets us prove the Christoffel-Darboux identity.

\begin{lemma}[Christoffel-Darboux]\label{le:cd}
Let $n$ be a positive integer and assume that $D_j^{(N)}(F)\neq 0$ for all $j\leq n$. For any $w, u \ne 0$, we have
\begin{align}\label{eq:cd1}
(1-u^{-1}w) \sum_{k=0}^{n-1} p_k(w) q_k(u^{-1})
= u^{-n}p_n(u) w^n q_n(w^{-1}) - p_n(w)q_n(u^{-1}).
\end{align}

\noindent In particular, for any $w \ne 0$ and $n \in \mathbb{N}$, 
\begin{align}\label{eq:cd2}
\sum_{k=0}^{n-1} p_k(w) q_k(w^{-1}) = -np_n(w) q_n(w^{-1}) + w\left(q_{n}(w^{-1}) \partial_w p_n(w) - p_n(w) \partial_w q_n(w^{-1})\right).
\end{align}
\end{lemma}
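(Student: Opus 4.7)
The plan is to prove \eqref{eq:cd1} by induction on $n$ using the recurrences of Lemma~\ref{le:rec}, and then to extract \eqref{eq:cd2} from it by an l'H\^opital limit $u\to w$. Adopt the shorthand $P=p_n(w)$, $Q=q_n(u^{-1})$, $R=w^n q_n(w^{-1})$, $S=u^{-n}p_n(u)$, with primes denoting the $n+1$ versions, so that \eqref{eq:cd1} reads $S'R'-P'Q' = (1-u^{-1}w)K_n$ with $K_n=\sum_{k=0}^{n-1} p_k(w)q_k(u^{-1})$. The base case $n=0$ is immediate since both sides vanish, so the whole proof reduces to the inductive identity
\[
S'R' - P'Q' \;=\; SR \;-\; u^{-1}w\,PQ.
\]

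The first main step is to derive four ``lifting'' formulas expressing each of $P', Q', R', S'$ as an explicit linear combination of $wP, u^{-1}Q, R, S$. I will pair \eqref{eq:rec1} with \eqref{eq:rec2} in the $w$ variables (for the unknowns $P', R'$), and in the $u$ variables (for the unknowns $Q', S'$). Each of the two $2\times 2$ linear systems so obtained has determinant $\chi_{n+1}\widehat{\chi}_{n+1}-p_{n+1}(0)q_{n+1}(0)$, which equals $\chi_n\widehat{\chi}_n$ by \eqref{eq:rec4}, so solving them gives the closed forms
\begin{align*}
P' &= \tfrac{\chi_{n+1}}{\chi_n}\,wP + \tfrac{p_{n+1}(0)}{\widehat{\chi}_n}\,R, &
R' &= \tfrac{\widehat{\chi}_{n+1}}{\widehat{\chi}_n}\,R + \tfrac{q_{n+1}(0)}{\chi_n}\,wP,\\
Q' &= \tfrac{\widehat{\chi}_{n+1}}{\widehat{\chi}_n}\,u^{-1}Q + \tfrac{q_{n+1}(0)}{\chi_n}\,S, &
S' &= \tfrac{\chi_{n+1}}{\chi_n}\,S + \tfrac{p_{n+1}(0)}{\widehat{\chi}_n}\,u^{-1}Q.
\end{align*}

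Substituting these into $S'R'-P'Q'$ produces eight bilinear terms. The two ``mixed'' terms proportional to $(wP)S$ appear with the common coefficient $\tfrac{\chi_{n+1}q_{n+1}(0)}{\chi_n^{2}}$ in both $S'R'$ and $P'Q'$, and similarly the two mixed terms proportional to $R(u^{-1}Q)$ share the coefficient $\tfrac{p_{n+1}(0)\widehat{\chi}_{n+1}}{\widehat{\chi}_n^{2}}$; so they cancel in the subtraction. The four surviving diagonal terms combine into
\[
\frac{\chi_{n+1}\widehat{\chi}_{n+1}-p_{n+1}(0)q_{n+1}(0)}{\chi_n\widehat{\chi}_n}\,\bigl(SR - u^{-1}w\,PQ\bigr),
\]
whose prefactor is exactly $1$ by \eqref{eq:rec4}, delivering the inductive step.

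For \eqref{eq:cd2} I will divide \eqref{eq:cd1} by $1-u^{-1}w$ and send $u\to w$; both numerator and denominator vanish, so one application of l'H\^opital in $u$, combined with the chain-rule identity $\partial_u q_n(u^{-1})\big|_{u=w}=\partial_w q_n(w^{-1})$, yields \eqref{eq:cd2} directly after collecting terms. I do not anticipate a serious obstacle: the only real risk is bookkeeping errors in tracking the ratios $\chi_{n+1}/\chi_n$ and $\widehat{\chi}_{n+1}/\widehat{\chi}_n$ through the eight cross-terms, but the bilinear structure of the lifting formulas together with the decisive collapse of the prefactor via \eqref{eq:rec4} make the needed cancellations essentially forced.
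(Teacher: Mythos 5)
Your proof is correct. Both the recurrence inversions and the cancellation check are right: using \eqref{eq:rec1} and \eqref{eq:rec2} in the $w$-variable (multiplied through to clear the powers of $w$) gives a linear system in $(P',R')$ with coefficient matrix $\left(\begin{smallmatrix}\widehat{\chi}_{n+1} & -p_{n+1}(0)\\ -q_{n+1}(0) & \chi_{n+1}\end{smallmatrix}\right)$, whose determinant is $\chi_n\widehat{\chi}_n$ by \eqref{eq:rec4}; the same matrix appears in the $u$-variable, and the four lifting formulas you write down are exactly what Cramer's rule produces. Expanding $S'R'-P'Q'$ then gives eight bilinear terms, the mixed ones pair off and cancel, and the diagonal ones combine into $(\chi_{n+1}\widehat{\chi}_{n+1}-p_{n+1}(0)q_{n+1}(0))/(\chi_n\widehat{\chi}_n)\cdot(SR-u^{-1}wPQ)=SR-u^{-1}wPQ$, which is precisely the inductive step. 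The base case $n=0$ holds since $p_0,q_0$ are constants. The l'H\^opital passage to \eqref{eq:cd2} is standard and matches the paper. (One small notational slip: where you say ``\eqref{eq:cd1} reads $S'R'-P'Q'=(1-u^{-1}w)K_n$'', the right-hand side should carry $K_{n+1}$; the inductive identity you actually prove and use is the correct one, so nothing downstream is affected.)

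The route differs from the paper's in a pleasant way. The paper works forward: it expands $u^{-1}wp_k(w)q_k(u^{-1})=(wp_k(w))(u^{-1}q_k(u^{-1}))$ using \eqref{eq:rec1} and \eqref{eq:rec3}, which produces four cross terms, and then invokes \eqref{eq:rec1}, \eqref{eq:rec2}, \eqref{eq:rec4} a second time to massage the unwanted cross terms into a telescoping sum. You instead invert the pair \eqref{eq:rec1}/\eqref{eq:rec2} as a $2\times 2$ linear system and express the $(n+1)$-level quantities in terms of the $n$-level ones; the determinant of that system is exactly the combination appearing in \eqref{eq:rec4}, so the prefactor collapses to $1$ and the cancellation of cross terms is forced by the bilinear symmetry rather than requiring a second pass through the recurrences. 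Both proofs are ultimately the same telescoping argument, but your linear-algebra packaging makes the role of \eqref{eq:rec4} more transparent and reduces the number of times the recurrences have to be invoked.
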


\begin{proof}
Using \eqref{eq:rec1} and \eqref{eq:rec3}, we have
\begin{align*}
& u^{-1}w p_k(w) q_k(u^{-1}) 
= \left(w p_k(w)\right)\left(u^{-1} q_k(u^{-1})\right)\\
& = \left[\frac{\widehat{\chi}_{k+1}}{\widehat{\chi}_{k}}p_{k+1}(w) - \frac{p_{k+1}(0)}{\widehat{\chi}_{k}}w^{k+1}q_{k+1}(w^{-1})\right] \left[ \frac{\widehat{\chi}_{k}}{\widehat{\chi}_{k+1}} q_{k+1}(u^{-1}) - \frac{q_{k+1}(0)}{\widehat{\chi}_{k+1}}\frac{\widehat{\chi}_k}{\chi_k} u^{-k} p_k(u)\right],
\end{align*}

\noindent and hence
\begin{align*}
& (1-u^{-1}w) p_k(w) q_k(u^{-1})
= p_k(w) q_k(u^{-1}) - p_{k+1}(w) q_{k+1}(u^{-1})\\
& \qquad \qquad + \left(\frac{w}{u}\right)^{k+1} \left[\frac{q_{k+1}(w^{-1})}{\widehat{\chi}_{k+1}} p_{k+1}(0) u^{k+1} q_{k+1}(u^{-1}) + \frac{u p_k(u)}{\chi_k}q_{k+1}(0) w^{-k-1}p_k(w)\right. \\
& \qquad \qquad \qquad - \left.\frac{p_{k+1}(0) q_{k+1}(0)}{\chi_k \widehat{\chi}_{k+1}} u p_k(u) q_{k+1}(w^{-1})\right].
\end{align*}

\noindent But from \eqref{eq:rec1}, \eqref{eq:rec2} and \eqref{eq:rec4}, we see that
\begin{align*}
\frac{q_{k+1}(w^{-1})}{\widehat{\chi}_{k+1}} p_{k+1}(0){u}^{k+1} q_{k+1}(u^{-1})
& = q_{k+1}(w^{-1})\left[p_{k+1}(u) - \frac{\widehat{\chi}_{k}}{\widehat{\chi}_{k+1}}u p_k(u)\right]\\
\frac{u p_k(u)}{\chi_k}q_{k+1}(0) w^{-k-1}p_k(w)
& = u p_k(u) \left[\frac{\chi_{k+1}}{\chi_k} q_{k+1}(w^{-1})-w^{-1} q_k(w^{-1})\right]\\
-\frac{p_{k+1}(0) q_{k+1}(0)}{\chi_k \widehat{\chi}_{k+1}} u p_k(u) q_{k+1}(w^{-1})
& = \left(\frac{\widehat{\chi}_k}{\widehat{\chi}_{k+1}} - \frac{\chi_{k+1}}{\chi_k}\right) up_k(u) q_{k+1}(w^{-1})
\end{align*}

\noindent and therefore 
\begin{align*}
(1-u^{-1}w) p_k(w) q_k(u^{-1}) &= 
p_k(w) q_k(u^{-1}) - p_{k+1}(w) q_{k+1}(u^{-1})\\
& \quad + \left(\frac{w}{u}\right)^{k+1} p_{k+1}(u)q_{k+1}(w^{-1}) -\left(\frac{w}{u}\right)^{k}p_k(u)q_{k}(w^{-1}).
\end{align*}

\noindent \eqref{eq:cd1} now follows by taking the sum from $k=0$ to $k=n-1$. \eqref{eq:cd2} follows from dividing \eqref{eq:cd1} by $(1-u^{-1}w)$ and letting $u\to w$.
\end{proof}

We can finally turn to our differential identity. This is very similar to corresponding proofs in \cite{DIK1,DIK2,Krasovsky}.

\begin{proof}[Proof of Lemma \ref{le:DI}]
We begin by noting that from Lemma \ref{le:ortop}

\begin{equation}\label{eq:dip1}
\partial_\gamma \log D_{N-1}(F;\gamma)=-2\sum_{j=0}^{N-1}\frac{\partial_\gamma \chi_j}{\chi_j},
\end{equation}

\noindent where the smoothness of $\chi_j$ and $D_{N-1}$ as functions of $\gamma$ follows e.g. from the determinantal representation \eqref{eq:polydet}. It follows from \eqref{eq:qporto} that 

\begin{align*}
\oint_\Sigma \left[ \partial_{\gamma} p_j(w) \right] q_{j}(w^{-1}) f(w) \frac{dw}{2\pi i w} = \frac{\partial_\gamma \chi_j}{\chi_j} \qquad \mathrm{and} \qquad
\oint_\Sigma p_j(w)\left[ \partial_{\gamma}  q_{j}(w^{-1}) \right] f(w) \frac{dw}{2\pi i w}= \frac{\partial_\gamma \widehat{\chi}_j}{\widehat{\chi}_j}.
\end{align*}

\noindent Moreover, we see from \eqref{eq:chihat} that

\begin{align}\label{eq:dchihat}
\frac{\partial_\gamma \widehat{\chi}_j}{\widehat{\chi}_j} = \frac{\partial_\gamma \chi_j}{\chi_j} + \partial_\gamma \log \Gamma \left(\frac{\gamma}{2} + j + 1 \right) - \frac{1}{2} \log N.
\end{align}

\noindent We can thus rewrite \eqref{eq:dip1} as 

\begin{align}\label{eq:dip2}
\partial_\gamma \log D_{N-1}(F;\gamma)&=-\sum_{j=0}^{N-1}\oint_\Sigma \partial_\gamma\left(p_j(w)q_{j-1}(w^{-1})\right)f(w)\frac{dw}{2\pi i w}\\
\notag & \quad +\partial_\gamma \sum_{j=0}^{N-1}\log \frac{\Gamma\left(\frac{\gamma}{2}+j+1\right)}{N^{\frac{\gamma}{2}}}.
\end{align}

Applying the Christoffel-Darboux identity \eqref{eq:cd2} and the orthogonality relations \eqref{eq:qporto}, we have

\begin{align}
\notag \oint_\Sigma & \left[\partial_\gamma \sum_{n=0}^{N-1} p_n(w) q_n(w^{-1})\right] f(w)\frac{dw}{2\pi i w}\\
\notag &\quad =\oint_\Sigma\partial_\gamma\left[-Np_N(w)q_N(w^{-1})+w(q_N(w^{-1})\partial_w p_N(w)-p_N(w)\partial_w q_N(w^{-1}))\right]f(w)\frac{dw}{2\pi i w}\\
\notag & \quad = -N \left[\frac{\partial_\gamma \chi_N}{\chi_N} + \frac{\partial_\gamma \widehat{\chi}_N}{\widehat{\chi}_N}\right] +\oint_\Sigma \left[\partial_\gamma \left(q_N(w^{-1}) \partial_w p_N(w) - p_N(w) \partial_w q_N(w^{-1})\right) \right] wf(w)\frac{dw}{2\pi i w}\\
& \quad = \underbrace{\oint_\Sigma \frac{\partial q_N(w^{-1})}{\partial \gamma} \frac{\partial p_N(w)}{\partial w} f(w)\frac{dw}{2\pi i }}_{=:I_1}
-\underbrace{\oint_\Sigma \frac{\partial q_N(w^{-1})}{\partial w} \frac{\partial p_N(w)}{\partial \gamma} f(w)\frac{dw}{2\pi i }}_{=:I_2}.\label{eq:I1I2}
\end{align}

Let us first evaluate $I_1$. For $\epsilon > 0$, let us write $\Sigma_\epsilon$ for a circular, radius $\epsilon$, indentation of $\Sigma$ at $x$ such that $\Sigma_\epsilon$ does not pass through any point of the interval $[0,x]$. We write $A_\epsilon =  \Sigma_\epsilon \setminus \Sigma$ and $x_\epsilon^{\pm} = x + \epsilon e^{\pm i\theta_\epsilon}$ to be the end points of $A_\epsilon$ in the upper and lower half plane respectively. Since
\begin{align*}
\lim_{\epsilon \to 0} \int_{A_\epsilon} \frac{\partial q_N(w^{-1})}{\partial \gamma} \frac{\partial p_N(w)}{\partial w} f(w)\frac{dw}{2\pi i} = 0
\end{align*}

\noindent (as the singularity is integrable) and
\begin{align*}
\partial_w f(w) = f(w) \left[\frac{\gamma}{2}\frac{1}{w-x} -\frac{\gamma}{2}\frac{1}{w} - Nx \right]=-Nx f(w)+\frac{\gamma}{2}\frac{xf(w)}{w-x}\frac{1}{w},
\end{align*}

\noindent integration by parts gives
\begin{align}\label{eq:I1}
\notag  I_1
& = \lim_{\epsilon \to 0} \int_{\Sigma_\epsilon \setminus A_\epsilon} \frac{\partial q_N(w^{-1})}{\partial \gamma} \frac{\partial p_N(w)}{\partial w} f(w)\frac{dw}{2\pi i}\\
\notag & = Nx\oint_{\Sigma} wp_N(w) \frac{\partial q_N(w^{-1})}{\partial \gamma}f(w) \frac{dw}{2\pi i w}
-\oint_\Sigma p_N(w) w\partial_w \left[ \frac{\partial q_N(w^{-1})}{\partial \gamma}\right]  f(w) \frac{dw}{2\pi i w}\\
& \qquad  - \lim_{\epsilon \to 0} \left[\frac{\gamma}{2} \int_{\Sigma_\epsilon \setminus A_\epsilon} p_N(w) \frac{\partial q_N(w^{-1})}{\partial \gamma}\frac{xf(w)}{w-x} \frac{dw}{2\pi i w} 
-\left.p_N(w) \frac{\partial q_N(w^{-1})}{\partial \gamma}\frac{f(w)}{2\pi i}\right|_{x_{\epsilon}^+}^{x_\epsilon^-}
\right].
\end{align}

\noindent Given that  $wp_N(w)=\frac{\widehat{\chi}_{N+1}}{\widehat{\chi}_N}p_{N+1}(w)-\frac{p_{N+1}(0)}{\widehat{\chi}_N}w^{N+1}q_{N+1}(w^{-1})$ from \eqref{eq:rec1}, we obtain by orthogonality, namely \eqref{eq:ortoc} and \eqref{eq:qporto}, that
\begin{align}\label{eq:I1c}
\oint_{\Sigma}wp_N(w)\frac{\partial q_N(w^{-1})}{\partial\gamma}f(w)\frac{dw}{2\pi i w}&=-\frac{p_{N+1}(0)}{\widehat{\chi}_N}\frac{\partial_\gamma q_N(0)}{\chi_{N+1}}.
\end{align}

\noindent It is also not difficult to verify (from \eqref{eq:qporto}) that
\begin{equation}\label{eq:I1a}
-\oint_\Sigma p_N(w) w\partial_w \left[ \frac{\partial q_N(w^{-1})}{\partial \gamma}\right]  f(w) \frac{dw}{2\pi i w}=N\frac{\partial_\gamma \widehat{\chi}_N}{\widehat{\chi}_N}.
\end{equation}

Next we study
\begin{align*}
 - \frac{\gamma}{2}\int_{\Sigma_\epsilon \setminus A_\epsilon}& p_N(w) \frac{\partial q_N(w^{-1})}{\partial \gamma}\frac{xf(w)}{w-x} \frac{dw}{2\pi i w}\\
&=-\frac{\gamma}{2}x^N\frac{\partial q_N(x^{-1})}{\partial\gamma}\int_{\Sigma_\epsilon \setminus A_\epsilon} p_N(w)w^{-N+1}\frac{f(w)}{w-x}\frac{dw}{2\pi i w} \\
&\quad -\frac{\gamma}{2}\int_{\Sigma_\epsilon \setminus A_\epsilon} p_N(w)w^{-N+1}\frac{\partial_\gamma(w^{N-1}q_N(w^{-1})-x^{N-1}q_N(x^{-1}))}{w-x}xf(w)\frac{dw}{2\pi i w}.
\end{align*}

\noindent As
\begin{align*}
w^{-N+1}\frac{\partial_\gamma(w^{N-1}q_N(w^{-1})-x^{N-1}q_N(x^{-1}))}{w-x}x=-\partial_\gamma \widehat{\chi}_N w^{-N}+\mathcal{P}_{N-1}(w^{-1}),
\end{align*}

\noindent for some polynomial $\mathcal{P}_{N-1}$ of degree at most $N-1$, \eqref{eq:ortoc} implies that

\begin{align}\label{eq:I1b}
-\frac{\gamma}{2}\lim_{\epsilon \to 0} \int_{\Sigma_\epsilon \setminus A_\epsilon} p_N(w)w^{-N+1}\frac{\partial_\gamma(w^{N-1}q_N(w^{-1})-x^{N-1}q_N(x^{-1}))}{w-x}xf(w)\frac{dw}{2\pi i w}
& = \frac{\gamma}{2}\frac{\partial_\gamma \widehat{\chi}_N}{\widehat{\chi}_N}.
\end{align}

Our next goal is to understand the asymptotics of 

\begin{align*}
&  \int_{\Sigma_\epsilon \setminus A_\epsilon} p_N(w)w^{-N+1}f(w) \left[ \frac{1}{w-z} - \frac{1}{w-x}\right]\frac{dw}{2\pi i w}\\
& \qquad = (z-x)  \int_{\Sigma_\epsilon \setminus A_\epsilon} \underbrace{\frac{p_N(w)w^{-N+1} w^{-\frac{\gamma}{2}} e^{-Nxw}}{2\pi i w} }_{=: \tilde{f}(w)}\frac{(w-x)^{\frac{\gamma}{2}}}{(w-z)(w-x)}dw\\
\end{align*}

\noindent in the limit where we first let $\epsilon\to 0$ and then $z \to x$, in $\mathrm{Int}(\Sigma) \cap \{w \in \mathbb{C}: \Im w > 0\}$. Let us write $\Sigma_\epsilon^\pm$ to be the part of $\Sigma_\epsilon \setminus A_\epsilon$ in the upper and lower half planes respectively, and deform $\Sigma_\epsilon^\pm$ into two parts $l_\epsilon^\pm$ and $L_\epsilon^\pm$, where $l_\epsilon^\pm \subset \{x + \kappa e^{\pm i\theta_\epsilon}: \kappa > 0\}$ (see the left diagram in Figure \ref{fig:I1int}).

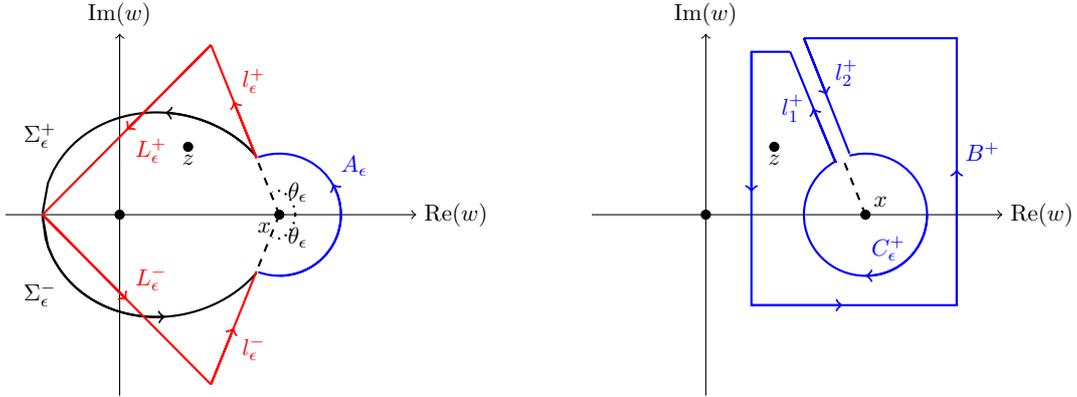
\begin{figure}[h!]
\begin{center}
\begin{tikzpicture}[scale = 3, every node/.style={scale=0.8}]
\draw[->] (-0.5,0) -- (1.3, 0) node[right] {$\mathrm{Re}(w)$};
\draw[->] (0, -0.8) -- (0, 0.8) node[above] {$\mathrm{Im}(w)$};

\draw[thick, <-, domain= 0.2:0.6, samples=40] plot ({\x}, {sqrt(0.49*exp(1.4*(\x-0.7))-\x*\x)});
\draw[thick, domain=-0.338392259:0.6, samples=40] plot ({\x}, {sqrt(0.49*exp(1.4*(\x-0.7))-\x*\x)});
\draw[thick, ->, domain= -0.338392259:0.2, samples=40] plot ({\x}, {-sqrt(0.49*exp(1.4*(\x-0.7))-\x*\x)});
\draw[thick, domain=-0.338392259:0.6, samples=40] plot ({\x}, {-sqrt(0.49*exp(1.4*(\x-0.7))-\x*\x)});

\node at (-0.35, 0.35) {$\Sigma_\epsilon^+$};
\node at (-0.35, -0.35) {$\Sigma_\epsilon^-$};

\draw[blue, thick] (0.97, 0) arc [radius=0.27, start angle=0, end angle= -110] (0.97, 0) arc [radius=0.27, start angle=0, end angle= 110];
\draw[blue, thick, ->] (0.7, -0.27) arc [radius=0.27, start angle=-90, end angle= 30] node[above right] {$A_\epsilon$};
\draw[dashed, thick] (0.7, 0) -- (0.6, 0.25);
\draw[dashed, thick] (0.7, 0) -- (0.6, -0.25);

\draw[dotted, thick] (0.77, 0) to [out = 90, in = 30] (0.67,0.1);
\node at (0.78, 0.1) {$\theta_\epsilon$};
\draw[dotted, thick] (0.77, 0) to [out = -90, in = -30] (0.67, -0.1);
\node at (0.78, -0.1) {$\theta_\epsilon$};

\draw[fill = black] (0.3, 0.3) circle [radius = 0.02] node[below]{$z$};

\draw[fill = black] (0.7, 0) circle [radius = 0.02] node[below left]{$x$};
\draw[fill = black] (0, 0) circle [radius = 0.02];

\draw[red, thick, ->] (-0.3384, 0) -- (0.4, 0.75) (0.4, 0.75) -- (0.0308, 0.375) node[below right] {$L_\epsilon^+$};
\draw[red, thick, ->] (0.6, 0.25) --  (0.4, 0.75) (0.6, 0.25) -- (0.5, 0.5) node [above right] {$l_\epsilon^+$} ;

\draw[red, thick, ->] (-0.3384, 0) -- (0.4, -0.75) (-0.3384, 0) -- (0.0308, -0.375) node[above right] {$L_\epsilon^-$};
\draw[red, thick, ->] (0.6, -0.25) --  (0.4, -0.75) (0.4, -0.75) -- (0.5, -0.5) node [below right] {$l_\epsilon^-$} ;

\end{tikzpicture}
\hspace{1cm}
\begin{tikzpicture}[scale = 3, every node/.style={scale=0.8}]
\draw[->] (-0.5,0) -- (1.3, 0) node[right] {$\mathrm{Re}(w)$};
\draw[->] (0, -0.8) -- (0, 0.8) node[above] {$\mathrm{Im}(w)$};

\draw[blue, thick] (0.97, 0) arc [radius = 0.27, start angle=0, end angle=105];
\draw[blue, thick] (0.97, 0) arc [radius=0.27, start angle=0, end angle= -242];
\draw[blue, thick,->] (0.97, 0) arc [radius=0.27, start angle=0, end angle= -90];
\node at (0.8, -0.15) {\color{blue} $C^+_\epsilon$};

\draw[dashed, thick] (0.7, 0) -- (0.6, 0.25);

\draw[blue, thick, ->] (0.57, 0.23) --  (0.37, 0.72)(0.57, 0.23) -- (0.47, 0.47) node[ left] {$l_1^+$};
\draw[blue, thick, ->] (0.63, 0.27) --  (0.43, 0.78)  (0.43, 0.78) --(0.53, 0.53) node[above right] {$l_2^+$};

\draw[blue, thick] (0.37, 0.72) -- (0.2, 0.72) -- (0.2, -0.4) -- (1.1, -0.4) -- (1.1, 0.78) -- (0.43, 0.78);
\draw[blue, thick, ->] (0.2, 0.72) -- (0.2, 0.1);
\draw[blue, thick, ->] (0.2, -0.4) -- (0.6, -0.4);
\draw[blue, thick, ->] (1.1, -0.4) -- (1.1, 0.2) node [above right] {$B^+$};

\draw[fill = black] (0.3, 0.3) circle [radius = 0.02] node[below]{$z$};

\draw[fill = black] (0.7, 0) circle [radius = 0.02] node[above right]{$x$};
\draw[fill = black] (0, 0) circle [radius = 0.02];
\end{tikzpicture}
\end{center}
\caption{\label{fig:I1int} Integration contour.}
\end{figure}

In order to evaluate
\begin{align*}
\int_{l_\epsilon^+} \frac{\tilde{f}(w)(w-x)^{\frac{\gamma}{2}}}{(w-z)(w-x)}dw,
\end{align*}

\noindent we shall consider the contour $\mathcal{C}^+ := l_1^+ \cup l_2^+ \cup B^+ \cup C_\epsilon^+$ (see the right diagram in Figure \ref{fig:I1int}). Let $(w-x)_1^{\frac{\gamma}{2}}$ be such that the branch cut is given by $\{x + \kappa e^{i \theta_\epsilon}: \kappa > 0\}$. Then by the residue theorem we have
\begin{align} \label{eq:I1C1}
\int_{\mathcal{C}^+} \frac{\tilde{f}(w)(w-x)_1^{\frac{\gamma}{2}}}{(w-z)(w-x)}dw 
= 2 \pi i \tilde{f}(z)(z-x)_1^{\frac{\gamma}{2}-1}
= 2 \pi i \tilde{f}(z)e^{-i \pi \gamma}(z-x)^{\frac{\gamma}{2}-1}
\end{align}

\noindent since $(z-x)_1^{\frac{\gamma}{2}}=e^{-i \pi \gamma}(z-x)^{\frac{\gamma}{2}}$ for our choice of $z$. On the other hand
\begin{align}\label{eq:I1C2}
\notag & \int_{\mathcal{C}^+} \frac{\tilde{f}(w)(w-x)_1^{\frac{\gamma}{2}}}{(w-z)(w-x)}dw\\
\notag & = \int_{l_1^+ \cup l_2^+} \frac{\tilde{f}(w)(w-x)_1^{\frac{\gamma}{2}}}{(w-z)(w-x)}dw
+ \underbrace{\int_{B^+} \frac{\tilde{f}(w)(w-x)_1^{\frac{\gamma}{2}}}{(w-z)(w-x)}dw}_{=: F_1(z)}
+ \int_{C_\epsilon^+} \frac{\tilde{f}(w)(w-x)_1^{\frac{\gamma}{2}}}{(w-z)(w-x)}dw\\
& = (e^{-i \pi \gamma} - 1) \int_{l_\epsilon^+} \frac{\tilde{f}(w)(w-x)^{\frac{\gamma}{2}}}{(w-z)(w-x)}dw  + F_1(z)
+ \frac{\tilde{f}(x)}{x-z} \frac{\epsilon^{\frac{\gamma}{2}}}{\gamma / 2} e^{i \theta_\epsilon \gamma / 2}(e^{-i \pi \gamma} - 1) 
+ \frac{\mathcal{O}(\epsilon^{\frac{\gamma}{2}+1})}{z-x}.
\end{align}

\noindent Comparing \eqref{eq:I1C1} and \eqref{eq:I1C2}, we obtain
\begin{align}
\notag & \int_{l_\epsilon^+} \frac{\tilde{f}(w)(w-x)^{\frac{\gamma}{2}}}{(w-z)(w-x)}dw\\
& \qquad = \frac{2 \pi i e^{-i \pi \gamma}}{(e^{-i \pi \gamma} - 1)} \tilde{f}(z)(z-x)^{\frac{\gamma}{2}-1} 
- \frac{\tilde{f}(x)}{x-z} \frac{\epsilon^{\frac{\gamma}{2}}}{\gamma / 2} e^{i \theta_\epsilon \gamma / 2}
- \frac{F_1(z)+\mathcal{O}(\epsilon^{\frac{\gamma}{2}+1})/ (z-x)}{e^{-i \pi \gamma }-1}
\end{align}

\noindent By choosing a different branch cut and a similar contour integral, we can deduce that
\begin{align}
\notag & \int_{l_\epsilon^-} \frac{\tilde{f}(w)(w-x)^{\frac{\gamma}{2}}}{(w-z)(w-x)}dw\\
& \qquad = \frac{2 \pi i}{(e^{i \pi \gamma} - 1)} \tilde{f}(z)(z-x)^{\frac{\gamma}{2}-1} 
+ \frac{\tilde{f}(x)}{x-z} \frac{\epsilon^{\frac{\gamma}{2}}}{\gamma / 2} e^{-i \theta_\epsilon \gamma / 2}
- \frac{F_2(z)+\mathcal{O}(\epsilon^{\frac{\gamma}{2}+1})/ (z-x)}{e^{i \pi \gamma }-1}
\end{align}

\noindent for some $F_2(z)$. Note that as $z \to x$, both $F_1$ and $F_2$ are bounded and $\tilde{f}(z) (z-x)^{\frac{\gamma}{2}-1} = \tilde{f}(x)(z-x)^{\frac{\gamma}{2}-1} + \mathcal{O}((z-x)^{\frac{\Re \gamma}{2}})$.  Collecting everything, we find

\begin{align*}
\int_{\Sigma_\epsilon \setminus A_\epsilon} p_N(w)&w^{-N+1}f(w) \left[ \frac{1}{w-z} - \frac{1}{w-x}\right]\frac{dw}{2\pi i w}\\
&=\frac{2}{\gamma}\widetilde{f}(x)\epsilon^{\gamma/2}\left(e^{i\theta_\epsilon\gamma/2}-e^{-i\theta_\epsilon\gamma/2}\right)+\mathcal{O}(z-x)+\mathcal{O}(\epsilon^{\frac{\gamma}{2}+1}),
\end{align*}

\noindent for $z\in \mathrm{Int}(\Sigma)$ and $\mathrm{Im}(z)>0$. Moreover, $\mathcal{O}(z-x)$ is uniform in $\epsilon>0$ (recall we take $\epsilon\to 0$ first). Letting $\epsilon\to 0$ and then $z\to x$, we conclude that

\begin{align}
\notag &  \lim_{z \to x} \int_{\Sigma} p_N(w) w^{-N+1} \frac{f(w)}{w-z}\frac{dw}{2\pi i w}\\
\label{eq:regint}
& \qquad = \lim_{\epsilon \to 0} \left[\int_{\Sigma_\epsilon \setminus A_\epsilon} p_N(w) w^{-N+1} \frac{f(w)}{w-x}\frac{dw}{2\pi i w}
+ \tilde{f}(x) \frac{\epsilon^{\frac{\gamma}{2}}}{\gamma/2} 2i \sin \frac{\theta_\epsilon \gamma}{2}\right].
\end{align}

\noindent The same contours may be used to study the case where $\Im(z) < 0$. One can carry out the same procedure, though with minor differences e.g. in the analogue of \eqref{eq:I1C1}, since   $(z-x)_1^{\frac{\gamma}{2}-1} = (z-x)^{\frac{\gamma}{2}-1}$ when $\Im(z) < 0$. Nevertheless, one still ends up with \eqref{eq:regint} when the limit of the Cauchy transform is taken along any sequence $z \in \mathrm{Int}(\Sigma) \setminus [0,x]$ with $\Im(z) < 0$. Therefore \eqref{eq:regint} remains valid whenever the limit $z \to x$ is taken inside the set $\mathrm{Int}(\Sigma) \setminus [0,x]$.

Finally, using the fact that $p_N(w), q_N(w^{-1})$ and $w^{-\frac{\gamma}{2}}e^{-Nxw}$ are analytic near $w = x$,
\begin{align} \label{eq:I1IBPend}
\left.p_N(w) \frac{\partial q_N(w^{-1})}{\partial \gamma}\frac{f(w)}{2\pi i}\right|_{x_{\epsilon}^+}^{x_\epsilon^-}
\notag & = p_N(x) \frac{\partial q_N(x^{-1})}{\partial \gamma}x^{-\frac{\gamma}{2}}e^{-Nx^2} \epsilon^{\frac{\gamma}{2}}(e^{-i \frac{\theta_\epsilon \gamma}{2}} - e^{i \frac{\theta_\epsilon \gamma}{2}}) +\mathcal{O}(\epsilon^{\frac{\Re \gamma}{2} + 1})\\
& = - \frac{\gamma}{2} x^N \frac{\partial q_N(x^{-1})}{\partial \gamma}\tilde{f}(x) \frac{\epsilon^{\frac{\gamma}{2}}}{\gamma/2} 2i \sin \frac{\theta_\epsilon \gamma}{2} + o(1)
\end{align}

\noindent and therefore
\begin{align} \label{eq:I1final}
I_1
\notag & = \left( N + \frac{\gamma}{2}\right)\frac{\partial_\gamma \widehat{\chi}_N}{\widehat{\chi}_N} -Nx\frac{p_{N+1}(0)}{\widehat{\chi}_N}\frac{\partial_\gamma q_N(0)}{\chi_{N+1}}\\
& \qquad - \frac{\gamma}{2} x^N \frac{\partial q_N(x^{-1})}{\partial \gamma} \lim_{z \to x} \int_{\Sigma} p_N(w) w^{-N+1} \frac{f(w)}{w-z}\frac{dw}{2\pi i w}.
\end{align}

As for $I_2$, the identity 
\begin{equation*}
w\partial_\gamma p_N(w)=\frac{\partial_\gamma\chi_N}{\chi_{N+1}}p_{N+1}(w)+\left(\frac{\partial_\gamma \kappa_N}{\chi_N}-\frac{\partial_\gamma\chi_N}{\chi_{N+1}}\frac{\kappa_{N+1}}{\chi_N}\right)p_N(w)+\mathcal{O}(w^{N-1}).
\end{equation*}

\noindent combined with the arguments above leads to

\begin{align}\label{eq:I2final}
I_2
&=-N\frac{\partial_\gamma \chi_N}{\chi_N}+Nx\left(\frac{\partial_\gamma \kappa_N}{\chi_N}-\frac{\partial_\gamma \chi_N}{\chi_N}\frac{\kappa_{N+1}}{\chi_{N+1}}\right)  -\frac{\gamma}{2}\frac{\partial p_N(x)}{\partial\gamma}x \lim_{z \to x} \oint_\Sigma \frac{q_N(w^{-1})f(w)}{w-z}\frac{dw}{2\pi i w}
\end{align}

\noindent where again the limit is taken along any sequence $z \in \mathrm{Int}(\Sigma) \setminus [0,x]$. Gathering all terms, we conclude that
\begin{align*}
& \partial_\gamma\log D_{N-1}(F;\gamma)\\
&=-\left(N+\frac{\gamma}{2}\right)\frac{\partial_\gamma\widehat{\chi}_N}{\widehat{\chi}_N}+\frac{\gamma}{2}x^N\frac{\partial q_N(x^{-1})}{\partial\gamma}\lim_{z \to x}\oint_\Sigma p_N(w)w^{-N+1}\frac{f(w)}{w-z}\frac{dw}{2\pi i w} -Nx\frac{p_{N+1}(0)}{\chi_{N+1}}\frac{\partial_\gamma q_N(0)}{\widehat{\chi}_N}\\
&\quad -N\frac{\partial_\gamma \chi_N}{\chi_N}-\frac{\gamma}{2}\frac{\partial p_N(x)}{\partial\gamma}x\lim_{z \to x}\oint_\Sigma \frac{q_N(w^{-1})f(w)}{w-z}\frac{dw}{2\pi i w}+Nx\left(\frac{\partial_\gamma \kappa_N}{\chi_N}-\frac{\partial_\gamma \chi_N}{\chi_N}\frac{\kappa_{N+1}}{\chi_{N+1}}\right)\\
&\quad +\partial_\gamma \sum_{j=0}^{N-1}\log \frac{\Gamma\left(\frac{\gamma}{2}+j+1\right)}{N^{\frac{\gamma}{2}}},
\end{align*}

\noindent which is the claim.

\end{proof}

\section{Asymptotic analysis of the Riemann-Hilbert problem -- Proofs for Section \ref{sec:RHPNasy}}\label{app:RHPasy}

In this appendix, we give proofs related to the asymptotic analysis of our Riemann-Hilbert problem. We begin with Lemma \ref{le:Sigmasign}.

\begin{proof}[Proof of Lemma \ref{le:Sigmasign}]
The fact that $\Sigma$ is a smooth, simple closed loop, encircling $[0,x]$ and passing only through $x$ follows e.g. from writing 

\begin{equation*}
\Sigma=\left\lbrace\left(u,\sqrt{x^2 e^{2x(u-x)}-u^2}\right): u_0\leq u\leq x\right\rbrace \cup \left\lbrace\left(u,-\sqrt{x^2 e^{2x(u-x)}-u^2}\right): u_0\leq u\leq x\right\rbrace,
\end{equation*}

\noindent where $u_0$ is the unique negative solution to the equation $x(u-x)+\log x-\log |u|$ (one can easily check that this equation has only one negative solution and for $u\in(0,x]$, the only solution is $u=x$). The fact that $\Sigma$ is inside the unit circle is obvious from \eqref{eq:Sigmadef} -- the definition of $\Sigma$.

The fact that $\mathrm{Re}(xw+\ell-\log w)$ is positive in $\mathrm{Int}(\Sigma)$ follows from the definition of $\Sigma$ and evaluating $\mathrm{Re}(xw+\ell-\log w)$ at $w=0$ (recall that $\Sigma$ encircles $[0,x]$). To see that $\mathrm{Re}(xw+\ell-\log w)$ is negative in $\mathrm{Ext}(\Sigma)\cap \lbrace |w|\leq 1\rbrace$, note first that on the unit circle, $\mathrm{Re}(xw+\ell-\log w)=x\mathrm{Re}(w)+\ell\leq x+\log x-x^2<0$ for $x<1$ (this also proves the claim of the uniform negative bound on the unit circle). Then we note that as $\mathrm{Re}(xw+\ell-\log w)$ is zero on $\Sigma$ and its only critical point is $w=1/x>1$, there can't be any points in $\mathrm{Ext}(\Sigma)\cap \lbrace |w|\leq 1\rbrace$ where it's positive (one of them would have to be a critical point).
\end{proof}

Let us then move on to the Riemann-Hilbert problem that $S$ satisfies.

\begin{proof}[Proof of Lemma \ref{le:SRHP}]
Analyticity and continuity of boundary values is clear from the corresponding properties for $Y$ and the definition of $S$. The jump condition across $\lbrace |w|=1\rbrace$ is also immediate from the definitions. Consider then the jump across $\Sigma$. From the definition of $S$ and $T$, we have for $w\in \Sigma\setminus \lbrace x\rbrace$

\begin{align*}
S_+(w)&=S_-(w)\begin{pmatrix}
1 & 0\\
-w^{\frac{\gamma}{2}}(w-x)^{-\frac{\gamma}{2}}e^{-kxw}e^{(N+k)(xw+\ell-\log w)} & 1
\end{pmatrix}\\
&\quad \times  \begin{pmatrix}
e^{-(N+k)(xw+\ell-\log w)} & (w-x)^{\frac{\gamma}{2}} w^{-\frac{\gamma}{2}}e^{kxw} \\
0 & e^{(N+k)(xw+\ell-\log w)}
\end{pmatrix}\\
&\quad \times \begin{pmatrix}
1 & 0\\
-w^{\frac{\gamma}{2}}(w-x)^{-\frac{\gamma}{2}}e^{-kxw}e^{-(N+k)(xw+\ell-\log w)} & 1
\end{pmatrix}\\
&=S_-(w)\begin{pmatrix}
0 & (w-x)^{\frac{\gamma}{2}} w^{-\frac{\gamma}{2}}e^{kxw}\\
-(w-x)^{-\frac{\gamma}{2}}w^{\frac{\gamma}{2}}e^{-kxw} & 0
\end{pmatrix}.
\end{align*}

For the jump across $(0,x)$, note that the only term contributing to the branch cut is $(w-x)^{-\frac{\gamma}{2}}$. The claimed jump is easily obtained by looking at the jump of this function.

For the behavior near zero, we note that (as $N+k+\frac{\gamma}{2}\geq 1+\frac{\gamma}{2}>0$)

$$
w^{\frac{\gamma}{2}}(w-x)^{-\frac{\gamma}{2}}e^{-kxw}e^{-(N+k)(xw+\ell-\log w)}\to 0
$$

\noindent as $w\to 0$ so since $Y(0)$ exists, one sees that $T(0)$ and thus $S(0)$ exist as well. For the behavior at $x$, note first that as $w\to x$, $T(w)$ has the same asymptotic behavior as $Y$: namely \eqref{eq:Yatxasy1}. Thus as $w\to x$ $($off of $C)$,

$$
S(w)=\begin{pmatrix}
\mathcal{O}(1) & \mathcal{O}(1)+\mathcal{O}\left(|w-x|^{\mathrm{Re}(\gamma/2)}\right)\\
\mathcal{O}(1) & \mathcal{O}(1)+\mathcal{O}\left(|w-x|^{\mathrm{Re}(\gamma/2)}\right)
\end{pmatrix}\begin{pmatrix}
1 & 0\\
\mathcal{O}\left(|w-x|^{-\frac{\mathrm{Re}(\gamma)}{2}}\right) & 1
\end{pmatrix},
$$

\noindent from which the claim follows.

 The normalization at infinity is a consequence of the corresponding property for $T$.
\end{proof}

Our next task is to prove that $P^{(x,r)}$ satisfies the RHP we claimed.

\begin{proof}[Proof of Lemma \ref{le:localRHP}]
Let us begin by noting that we can write 

\begin{align*}
P^{(x,r)}(w)&=\begin{pmatrix}
1 & w^{\frac{\gamma}{2}}(w-x)^{-\frac{\gamma}{2}}e^{kxw}e^{\zeta(w)}\frac{\Gamma\left(\frac{\gamma}{2},\zeta(w)\right)}{\Gamma(\frac{\gamma}{2})}\\
0 & 1
\end{pmatrix}\\
&\quad \times \begin{pmatrix}
1 & h_r(w;\gamma)-e^{kxw}w^{\gamma/2}(w-x)^{-\gamma/2}\zeta(w)^{\gamma/2}\sum_{j=0}^{r}\frac{1}{\Gamma(\frac{\gamma}{2}-j)}\zeta(w)^{-j-1}\\
0 & 1
\end{pmatrix}P^{(\infty)}(w).
\end{align*}

\noindent By the definition of the incomplete gamma function (see \eqref{eq:incomp}), the first matrix here has a branch cut along $(-\infty,x)$, but no other singularities. By the definition of $h_r$ (Definition \ref{def:h}), the second matrix is analytic in $U$. From this, we conclude that indeed $P^{(x,r)}$ is analytic in $U\setminus (\Sigma\cup[0,x])$. Continuity of the boundary values is immediate from the definitions. For the jump conditions, we note we just argued that on $\Sigma\setminus \lbrace x\rbrace$, the only jump comes from $P^{(\infty)}$, and as mentioned in Section \ref{sec:global}, one can check easily that it satisfies \eqref{eq:Sjump2}.

For the jump across $(0,x)$, we see that the only contribution to the jump comes from the incomplete gamma function term, and we simply need the following calculation (which is easy to check from \eqref{eq:incomp}):

\begin{align*}
&\left(w^{\frac{\gamma}{2}}(w-x)^{-\frac{\gamma}{2}}e^{kxw}\frac{\Gamma(\frac{\gamma}{2},\zeta(w))e^{\zeta(w)}}{\Gamma(\frac{\gamma}{2})}\right)_+-\left(w^{\frac{\gamma}{2}}(w-x)^{-\frac{\gamma}{2}}e^{kxw}\frac{\Gamma(\frac{\gamma}{2},\zeta(w))e^{\zeta(w)}}{\Gamma(\frac{\gamma}{2})}\right)_-\\
&\quad =w^{\frac{\gamma}{2}}e^{\zeta(w)}e^{kxw}\left[(w-x)^{-\frac{\gamma}{2}}_+-(w-x)^{-\frac{\gamma}{2}}_-\right]\\
&\quad =-2i \sin \frac{\pi \gamma}{2} |w|^{\frac{\gamma}{2}}|w-x|^{-\frac{\gamma}{2}}e^{kxw}e^{\zeta(w)}
\end{align*}

\noindent from which we find that for $w\in(0,x)\cap U$

\begin{align*}
\left[P^{(x,r)}_-(w)\right]^{-1}P^{(x,r)}_+(w)&=\begin{pmatrix}
0 & -e^{kxw}\\
e^{-kxw} & 0
\end{pmatrix}\begin{pmatrix}
1 & -2i \sin \frac{\pi \gamma}{2} |w|^{\frac{\gamma}{2}}|w-x|^{-\frac{\gamma}{2}}e^{kxw}e^{\zeta(w)}\\
0 & 1
\end{pmatrix}\\
&\quad \times \begin{pmatrix}
0 & e^{kxw}\\
-e^{-kxw} & 0
\end{pmatrix}\\
&=\begin{pmatrix}
1 & 0\\
2i \sin \frac{\pi \gamma}{2} |w|^{\frac{\gamma}{2}}|w-x|^{-\frac{\gamma}{2}}e^{-kxw}e^{\zeta(w)} & 1
\end{pmatrix},
\end{align*}

\noindent which is \eqref{eq:localjump2}.

Let us then move onto the behavior at $x$. We begin with \eqref{eq:localasy}. Simply using the definition of $S$, $\zeta$, $\widehat{P}^{(\infty,r)}$, and $P^{(x,r)}$, we see that for $w\in \mathrm{Int}(\Sigma)\setminus[0,x]$

\begin{align*}
S(w)P^{(x,r)}(w)^{-1}&=T(w)
\begin{pmatrix}
1 & 0\\
-w^{\frac{\gamma}{2}}(w-x)^{-\frac{\gamma}{2}}e^{-kxw}e^{-(N+k)(xw+\ell-\log w)} & 1
\end{pmatrix}\begin{pmatrix}
0 & -e^{kxw}\\
e^{-kxw} & 0
\end{pmatrix} \\
&\quad \times\begin{pmatrix}
1 & -h_r(w;\gamma)-Q_r(w)\\
0 & 1
\end{pmatrix}\\
&=T(w)\begin{pmatrix}
0 & -e^{kxw}\\
e^{-kxw} & w^{\frac{\gamma}{2}}(w-x)^{-\frac{\gamma}{2}}e^{\zeta(w)}
\end{pmatrix}\begin{pmatrix}
1 & -h_r(w;\gamma)-Q_r(w)\\
0 & 1
\end{pmatrix}\\
&=T(w)\begin{pmatrix}
0 & -e^{kxw}\\
e^{-kxw} & \alpha_1(w)+\alpha_2(w)
\end{pmatrix}
\end{align*}

\noindent where 

$$
\alpha_1(w)=w^{\gamma/2}(w-x)^{-\gamma/2}e^{\zeta(w)}\left[\frac{\Gamma(\frac{\gamma}{2},\zeta(w))}{\Gamma(\frac{\gamma}{2})}-1\right]
$$

\noindent and 

$$
\alpha_2(w)=w^{\gamma/2}(w-x)^{-\gamma/2}\zeta(w)^{\gamma/2}\sum_{j=0}^{r} \frac{1}{\Gamma(\frac{\gamma}{2}-j)}\zeta(w)^{-1-j}-e^{-kxw}h_r(w;\gamma).
$$

\noindent From the definition of $h_r$, $\alpha_2$ is analytic, while from the definition of the incomplete gamma function, we see that 

\begin{align*}
\alpha_1(w)=-w^{\gamma/2}(w-x)^{-\gamma/2}e^{\zeta(w)}\zeta(w)^{\gamma/2}\gamma^*\left(\frac{\gamma}{2},\zeta(w)\right).
\end{align*}

\noindent Again, as $\zeta(w)$ has a simple zero at $x$, this is also analytic, so we have 

$$
S(w)=T(w)\begin{pmatrix}
0 & \mathcal{O}(1)\\
\mathcal{O}(1) & \mathcal{O}(1)
\end{pmatrix}
$$

\noindent from which the claim follows once one notices that the definition of $T$ implies that it has the same asymptotic behavior as $Y$ at $x$. For \eqref{eq:localasyb}, we note that for $w\in\mathrm{Ext}(\Sigma)$, 

\begin{align*}
P^{(x.r)}(w)&=\begin{pmatrix}
1 & Q_r(w)+h_r(w)\\
0 & 1
\end{pmatrix}\begin{pmatrix}
w^{\gamma/2}(w-x)^{-\gamma/2} & 0\\
0 & w^{-\gamma/2}(w-x)^{\gamma/2}
\end{pmatrix}\\
&=\begin{pmatrix}
w^{\gamma/2}(w-x)^{-\gamma/2} & (Q_r(w)+h_r(w)) w^{-\gamma/2}(w-x)^{\gamma/2}\\
0 & w^{-\gamma/2}(w-x)^{\gamma/2}
\end{pmatrix}.
\end{align*}

\noindent Again, 

\begin{align*}
Q_r(w)+h_r(w)&=h_r(w)-e^{kxw}w^{\gamma/2}(w-x)^{-\gamma/2}\zeta(w)^{\gamma/2}\sum_{j=0}^{r}\frac{1}{\Gamma(\frac{\gamma}{2}-j)}\zeta(w)^{-j-1}\\
&\quad +e^{kxw}w^{\gamma/2}(w-x)^{-\gamma/2}e^{\zeta(w)}\frac{\Gamma(\frac{\gamma}{2},\zeta(w))}{\Gamma(\frac{\gamma}{2})},
\end{align*}

\noindent where by the definition of $h_r$ (Definition \ref{def:h}), the first row of this equation is bounded at $x$ and by the definition of the incomplete gamma function -- namely \eqref{eq:incomp}, the second row of this equation is $\mathcal{O}(1)+\mathcal{O}(|w-x|^{-\frac{\mathrm{Re}(\gamma)}{2}})$. Putting everything together, we find (a stronger claim than) \eqref{eq:localasyb}.

\smallskip

Finally, we need to check the matching condition \eqref{eq:localmatch}. By the definition of $P^{(x,r)}$, we find immediately that for any $w\in U\setminus (\Sigma\cup[0,x])$

$$
P^{(x,r)}(w)\widehat{P}^{(\infty,r)}(w)^{-1}=I+\begin{pmatrix}
0 & Q_r(w)\\
0 & 0
\end{pmatrix}.
$$

\noindent Now for $w\in \partial U$, $|\zeta(w)|\asymp N$ uniformly in $w\in \partial U$ ($a\asymp b$ meaning $a=\mathcal{O}(b)$ and $b=\mathcal{O}(a)$) so we need to find the large $|\zeta|$ asymptotics of $Q$. For this, we use the following asymptotic expansion of the incomplete gamma function  (see e.g. \cite[Section 4.2]{olver}, where the proof is for real $\gamma$, but it works with obvious modifications also for complex $\gamma$): for any $p\in \Z_+$,

\begin{align}\label{eq:gammaasy}
\frac{\Gamma(\frac{\gamma}{2},\zeta)}{\Gamma\left(\frac{\gamma}{2}\right)}e^\zeta=\zeta^{\frac{\gamma}{2}-1}\left(\sum_{k=0}^p \frac{1}{\Gamma(\frac{\gamma}{2}-k)}\zeta^{-k}+\mathcal{O}(\zeta^{-p-1})\right),
\end{align}

\noindent where the error is uniform in $\gamma$ in compact subsets of  $\lbrace \gamma\in \C:\mathrm{Re}(\gamma)>-2\rbrace$. This yields immediately that uniformly in $w\in\partial U$ (and uniformly in the relevant $\gamma$ and $x$)

$$
Q_r(w)=\mathcal{O}(|\zeta(w)|^{\frac{\gamma}{2}-r-2})=\mathcal{O}(N^{\frac{\gamma}{2}-r-2}),
$$

\noindent which implies \eqref{eq:localjump1} and concludes the proof.

\end{proof}

We now turn to proving that $R$ is a solution to the RHP we claimed.

\begin{proof}[Proof of Lemma \ref{le:RRHP}]
The proof is largely standard. Uniqueness is the standard argument. We note that by construction, the branch cuts of the parametrices cancel with those of $S$, and the only jumps are across $\partial U$ and the unit circle. For analyticity, one still needs to check that there is no isolated singularity at $x$. Using \eqref{eq:Sasy}, \eqref{eq:localasy}, and \eqref{eq:localasyb}, one sees that any possible singularity of $R$ at $x$ is of bounded degree and can't thus be essential. Note that if there were a pole, then independently from the direction $w$ approaches $x$ from, one would have that for some positive integer $m$, $(w-x)^m R(w)$  would converge to a finite non-zero matrix as $w\to x$. Now if we approach from $\mathrm{Int}(\Sigma)\setminus[0,x]$, then by \eqref{eq:localasy} (and the fact that $\mathrm{Re}(\gamma)>-2$), $(w-x)R(w)\to 0$ so we can't have a pole -- $R$ is analytic in the claimed region.

Continuity of the boundary values and the structure of the jump matrices follow directly from the relevant definitions and from \eqref{eq:Sjump1}. The normalization at infinity also follows from the asymptotic behavior of $S$ and $\widehat{P}^{(\infty,r)}$ at infinity. The estimates for the jump matrices follow from \eqref{eq:localmatch} and Lemma \ref{le:Sigmasign}.

\end{proof}

We conclude with the proof of the asymptotic behavior of $R$.

\begin{proof}[Proof of Lemma \ref{le:smnorm}]
Again, most of the proof is standard and surely obvious for experts, but for the convenience of the reader, we offer a sketch of a proof here. We follow reasoning from \cite{Krasovsky,DIK1,DIK2}. We now recall how one sees that a unique solution exists for this RHP. Again, uniqueness can be proven the standard way. To see existence, we introduce some (standard) notation: for $w\in \C\setminus \Gamma_R$, let

\begin{align*}
C(f) :=  \int_{\Gamma_R} \frac{f(s)}{s-w}\frac{ds}{2\pi i}
\end{align*}

\noindent and let $C_{\Delta_R}(f) = C_-(f \Delta_R)$, where $C_-(f)(w) = \lim_{z \to w} C(f)(z)$ as $z$ approaches $w \in \Gamma_R$ from the $-$ side of $\Gamma_R$. Since $C_-: L^2(\Gamma_R) \to L^2(\Gamma_R)$ is a bounded operator (see \cite[Appendix A]{dkmlvz} and the references therein), our estimate on the jump matrix of $R$, namely $||\Delta_R||_{L^\infty(\Gamma_R)}=\mathcal{O}(N^{\frac{1}{2}\mathrm{Re}(\gamma)-r-2})$ implies that the operator norm of $C_{\Delta_R}$ is  $\mathcal{O}(N^{\frac{\mathrm{Re}(\gamma)}{2}-r-2})$, and therefore for large enough $N$ and choosing $r$ suitably ("large enough" and $r$ depending only on the compact set $K$ that  $\gamma$ is in and the compact subset of $(0,1)$ that $x$ is in), $I-C_{\Delta_R}$ is invertible. Arguing as in \cite[the proof of Theorem 7.8]{dkmlvz} (though in a slightly inverted order since we don't know the existence of a solution) one can check that 

\begin{align}\label{eq:Rsol}
R = I + C[\Delta_R + (I - C_{\Delta_R})^{-1} (C_{\Delta_R}(I))\Delta_R]
\end{align}

\noindent is a solution to the problem. Moreover, one can check that this implies that $R$ can also be represented in terms of its boundary values:

\begin{align}\label{eq:Rsol2}
R(w)=I+(C_{\Delta_R}R_-)(w)=I+\int_{\Gamma_R}\frac{R_-(s)\Delta_R(s)}{s-w}\frac{ds}{2\pi i}.
\end{align}

\noindent To get a hold of the asymptotic behavior of $R$, we note one consequence of the definition \eqref{eq:Rsol} is that $R_--I=(1-C_{\Delta_R})^{-1}C_{\Delta_R}(I)$. Since the norm of $C_{\Delta_R}$ is of order $N^{\frac{1}{2}\mathrm{Re}(\gamma)-r-2}$, we see from this that

\begin{align}\label{eq:Rbound}
||R_--I||_{L^2(\Gamma_R)}\leq ||(I-C_{\Delta_R})^{-1}||_{L^2(\Gamma_R)\to L^2(\Gamma_R)}||C_{\Delta_R}(I)||_{L^2(\Gamma_R)}=\mathcal{O}\left(N^{\frac{1}{2}\mathrm{Re}(\gamma)-r-2}\right).
\end{align}

\noindent Let us now fix $\delta>0$ and let $w$ be at distance at least $\delta$ from $\Gamma_R$. Then applying \eqref{eq:Rbound} to \eqref{eq:Rsol2} and using Cauchy-Schwarz, we see that 

\begin{align*}
|R(w)-I|&\leq |(C_{\Delta_R}I)(w)|+|(C_{\Delta_R}[R_--I])(w)|\\
&=\mathcal{O}\left(||{\Delta_R}||_{L^\infty(\Gamma_R)}\right)+\mathcal{O}\left(||R_--I||_{L^2(\Gamma_R)}||\Delta_R||_{L^2(\Gamma_R)}\right)\\
&=\mathcal{O}\left(N^{\frac{1}{2}\mathrm{Re}(\gamma)-r-2}\right),
\end{align*}

\noindent where the implied constants depend on $\delta$, but are uniform in $\gamma$ (when restricted to a compact set). This bound can be extended to points $w$ close to $\Gamma_R$ with the standard contour deformation argument -- see \cite[Corollary 7.9]{dkmlvz}. To conclude the proof of \eqref{eq:R1},  note that we have from \eqref{eq:Rsol2} that $\lim_{w\to\infty}w(R(w)-I)=-\int_{\Gamma_R}R_-(s)\Delta_R(s)\frac{ds}{2\pi i}$, for which repeating our previous argument shows the claim.

We now move onto the proof of \eqref{eq:R2}. Here our goal is to show that $R(w)$ is an analytic in $\gamma$ on the set $\lbrace\gamma\in \C:\mathrm{Re}(\gamma)>-2\rbrace$. Then Cauchy's integral formula combined with \eqref{eq:R1} will give \eqref{eq:R2}. We note that going back in our chain of transformations, the existence of $R$ lets us define the matrix $Y$ in terms of $R$, the parametrices, and our transformations. Moreover, the RHP for $R$ induces a RHP for $Y$ as well and this RHP is precisely the one appearing in Lemma \ref{le:RHP}, though checking the asymptotic behavior at $x$ is not completely obvious. For this, we note first that reversing our transformations, $T$ and $Y$ have the same behavior at $x$ so it's enough to study asymptotics of $T$. For this, we note that if $w\to x$ and $w\in \mathrm{Int}(\Sigma)$, a direct calculation (using the definitions of our transformation, the definition of $h_r$, the definition of $Q_r$, and the definition of the incomplete gamma function) shows that we have 

\begin{align*}
T(w)&=R(w)\begin{pmatrix}
w^{\gamma/2}(w-x)^{-\gamma/2}e^{\zeta(w)}-e^{-kxw}(h_r(w;\gamma)+Q_r(w)) & e^{kxw}\\
-e^{-kxw} & 0
\end{pmatrix}\\
&=\begin{pmatrix}
\mathcal{O}(1) & \mathcal{O}(1)\\
\mathcal{O}(1) & \mathcal{O}(1)
\end{pmatrix}\begin{pmatrix}
\mathcal{O}(1)+w^{\gamma/2}(w-x)^{-\gamma/2}e^{\zeta(w)}\left(1-\frac{\Gamma(\frac{\gamma}{2},\zeta(w))}{\Gamma(\frac{\gamma}{2})}\right) & \mathcal{O}(1)\\
\mathcal{O}(1) & 0
\end{pmatrix}\\
&=\mathcal{O}(1)
\end{align*}

\noindent so we see that as $w\to x$ from $\mathrm{Int}(\Sigma)$, $Y(w)=\mathcal{O}(1)$. On the other hand, a similar argument shows that as $w\to x$ from $\mathrm{Ext}(\Sigma)$, we have 

\begin{align*}
T(w)&=R(w)\begin{pmatrix}
w^{\gamma/2}(w-x)^{-\gamma/2}-\frac{Q_r(w)+h_r(w)}{e^{kxw}e^{\zeta(w)}} & w^{-\gamma/2}(w-x)^{\gamma/2} (Q_r(w)+h_r(w))\\
-e^{-kxw}e^{-\zeta(w)} & w^{-\gamma/2}(w-x)^{\gamma/2}
\end{pmatrix}\\
&=\begin{pmatrix}
\mathcal{O}(1) & \mathcal{O}(1)\\
\mathcal{O}(1) & \mathcal{O}(1)
\end{pmatrix}\begin{pmatrix}
\mathcal{O}(1) & \mathcal{O}(1)+\mathcal{O}(|w-x|^{\frac{\mathrm{Re}(\gamma)}{2}})\\
\mathcal{O}(1) & \mathcal{O}(|w-x|^{\frac{\mathrm{Re}(\gamma)}{2}})\\
\end{pmatrix}\\
&=\begin{pmatrix}
\mathcal{O}(1) & \mathcal{O}(1)+\mathcal{O}(|w-x|^{\frac{\mathrm{Re}(\gamma)}{2}})\\
\mathcal{O}(1) & \mathcal{O}(1)+\mathcal{O}(|w-x|^{\frac{\mathrm{Re}(\gamma)}{2}})
\end{pmatrix}.
\end{align*}

\noindent We conclude that $Y$ defined from $R$ satisfies the asymptotic behavior \eqref{eq:Yatxasy1}.

It is then another standard argument (using the jump condition of $Y$, its asymptotic behavior, Liouville's theorem, and some regularity properties of the Cauchy transform -- we omit the details) that the polynomials $p_{N+k}(w)$ and $q_{N+k-1}(w^{-1})$ must exist and $Y_{N+k}$ is given by \eqref{eq:Ydef} in terms of these polynomials. More precisely,  one has 

\begin{equation*}
\frac{1}{\chi_{N+k}}p_{N+k}(w)=\frac{1}{\widehat{D}_{N+k-1}}\begin{vmatrix}
\oint_{\Sigma} f(s)\frac{ds}{2\pi i s} & \cdots & \oint_{\Sigma} s^{N+k}f(s)\frac{ds}{2\pi i s}\\
\oint_{\Sigma} s^{-1}f(s)\frac{ds}{2\pi i s} & \cdots & \oint_{\Sigma} s^{N+k-1}f(s)\frac{ds}{2\pi i s}\\
\vdots &  & \vdots\\
\oint_{\Sigma} s^{-N-k+1}f(s)\frac{ds}{2\pi i s} & \cdots  &\oint_{\Sigma} s^1 f(s)\frac{ds}{2\pi i s}\\
1 & \cdots & w^{N+k}
\end{vmatrix},
\end{equation*}

\noindent where $\widehat{D}_{N+k-1}=\det(\oint_{\Sigma}s^{i-j} f(s)\frac{ds}{2\pi i s})_{i,j=0}^{N+k-1}$ (as in Lemma \ref{le:ortoq}) and a similar expression exists for $Y_{21}(w)$, namely it equals the polynomial $-\chi_{N+k-1}w^{N+k-1}q_{N+k-1}(w^{-1})$. In particular, the uniqueness of the solution to the $R$-RHP, which then implies the uniqueness of the solution to $Y$-RHP guarantees that $\mathcal{D}_{N+k-1}\neq 0$. Now all of the entries appearing in this determinant as well as $\mathcal{D}_{N+k-1}$ are analytic functions of $\gamma$ so we conclude that $Y_{11}$ (and similarly other entries of $Y$) are analytic functions of $\gamma$. Then, going back to $R$, we conclude that $R$ is an analytic function of $\gamma$.

Now to obtain \eqref{eq:R2}, we write for a fixed $\gamma$ with $\mathrm{Re}(\gamma)>-2$, $L_\gamma$ for a square of side length $\epsilon$ centered at $\gamma$ (epsilon less than the distance to the boundary of the set). Let us write also $R(w,\gamma)$ to highlight the dependence on $\gamma$. We note that by analyticity (Cauchy's integral formula), we have

\begin{align*}
\partial_\gamma R(w,\gamma)=\frac{1}{2\pi i}\oint_{L_\gamma}\frac{R(w,\mu)}{(\mu-\gamma)^2}\frac{d\mu}{2\pi i}=\frac{1}{2\pi i}\oint_{L_\gamma}\frac{R(w,\mu)-I}{(\mu-\gamma)^2}\frac{d\mu}{2\pi i}.
\end{align*}

\noindent The first estimate in \eqref{eq:R2} then follows from the first estimate in \eqref{eq:R1}. The second claim is similar and uses again the expression $\lim_{w\to\infty}(w(R(w,\gamma)-I))=-\oint_{\Gamma_R}R_-(s,\gamma)\Delta_R(s,\gamma)\frac{ds}{2\pi i}$. For this, we also need an estimate for $\partial_\gamma \Delta_R(s,\gamma)$. This also can be estimated with a similar Cauchy integral formula argument due to the analyticity in $\gamma$, and the claim follows from our bounds on $\Delta_R(s,\gamma)$. This concludes the proof.
\end{proof}


\begin{thebibliography}{99}
\bibitem{AHM} Y. Ameur, H. Hedenmalm, and N. Makarov: Random normal matrices and Ward identities. Ann. Probab. 43 (2015), no. 3, 1157–1201.
\bibitem{AJKS}  K. Astala, P. Jones, A. Kupiainen, and E. Saksman: Random conformal weldings. Acta Math. 207
(2011), no. 2, 203–254.
\bibitem{BBLM} F. Balogh, M. Bertola, S.-Y. Lee, and K.T.-R. McLaughlin. Strong asymptotics of the orthogonal polynomials with respect to a measure supported on the plane. Comm. Pure Appl. Math.,  68 (2015), no. 1, 112–172.
\bibitem{Berestycki1} N. Berestycki: An elementary approach to Gaussian multiplicative chaos. Electron. Commun. Probab. 22 (2017), Paper No. 27, 12 pp.
\bibitem{Berestycki2} N. Berestycki: Introduction to the Gaussian free field and Liouville quantum gravity. Available on the author’s website.
\bibitem{BWW} N. Berestycki, C. Webb, and M.D. Wong: Random Hermitian Matrices and Gaussian Multiplicative Chaos. To appear in Probab. Theory Relat. Fields. Preprint arXiv:1701.03289. 
\bibitem{DKRV} F. David, A. Kupiainen, R. Rhodes, and V. Vargas: Liouville quantum gravity on the Riemann
sphere. Comm. Math. Phys. 342 (2016), no. 3, 869–907.
\bibitem{G} J. Ginibre: Statistical ensembles of complex, quaternion, and real matrices. J. Math. Phys. 6, 440-449. (1965)
\bibitem{deift} P. Deift: Orthogonal polynomials and random matrices: a Riemann-Hilbert approach. Courant Lecture Notes in Mathematics, 3. New York University, Courant Institute of Mathematical Sciences, New York; American Mathematical Society, Providence, RI, 1999. viii+273 pp.
\bibitem{DIK1} P. Deift, A. Its, and I. Krasovsky: Asymptotics of Toeplitz, Hankel, and Toeplitz$+$Hankel determinants with Fisher-Hartwig Singularities. Ann. of Math. (2) 174 (2011), no. 2, 1243–1299.
\bibitem{DIK3} P. Deift, A. Its, and I. Krasovsky: Toeplitz matrices and Toeplitz determinants under the impetus of the Ising model: some history and some recent results. Comm. Pure Appl. Math. 66 (2013), no. 9, 1360–1438. 
\bibitem{DIK2} P. Deift, A. Its, and I. Krasovsky. On the asymptotics of a Toeplitz determinant with singularities. Random matrix theory, interacting particle systems, and integrable systems, 93–146, Math. Sci.
Res. Inst. Publ., 65, Cambridge Univ. Press, New York, 2014.
\bibitem{dkmlvz} P. Deift, T. Kriecherbauer, K.T.-R. McLaughlin, S. Venakides, X. Zhou: Strong asymptotics of orthogonal polynomials with respect to exponential weights. Comm. Pure Appl. Math. 52 (1999), no. 12, 1491–1552.
\bibitem{FL} C. Ferreira and J.L. L\'opez: An asymptotic expansion of the double gamma function. J. Approx. Theory 111 (2001), pp. 298–314.
\bibitem{FIK} A.S. Fokas, A.R. Its, and A.V. Kitaev, The isomonodromy approach to matrix
models in 2D quantum gravity, Comm. Math. Phys. 147 (1992), 395–430.
\bibitem{FR} P. Forrester and E. Rains: Matrix averages relating to Ginibre ensembles. J. Phys. A 42 (2009), no. 38, 385205, 13 pp.
\bibitem{FK} Y.V. Fyodorov and B.A. Khoruzhenko: On absolute moments of characteristic polynomials of a certain class of complex random matrices. Comm. Math. Phys. 273 (2007), no. 3, 561–599.
\bibitem{Kahane}  J.-P. Kahane: Sur le chaos multiplicatif. Ann. Sci. Math. Qu\'ebec 9 (1985), no. 2, 105–150.
\bibitem{KS} J. Keating and N. Snaith: Random matrix theory and $\zeta(1/2+it)$. Comm. Math. Phys. 214 (2001), 57–89.
\bibitem{Krasovsky} I. Krasovsky:  Correlations of the characteristic polynomials in the Gaussian unitary ensemble or a singular Hankel determinant. Duke Math. J. 139 (2007), no. 3, 581–619.
\bibitem{kuijlaars} A. Kuijlaars: Riemann-Hilbert analysis for orthogonal polynomials. In Orthogonal polynomials and special functions (Leuven, 2002), volume 1817 of Lecture Notes in Math., pages 167–210. Springer, Berlin, 2003. 
\bibitem{LOS}  G. Lambert, D. Ostrovsky and N. Simm: Subcritical multiplicative chaos for regularized counting
statistics from random matrix theory. Preprint arXiv:1612.02367.
\bibitem{LY} S.-Y. Lee and M. Yang: Discontinuity in the asymptotic behavior of planar orthogonal polynomials under a perturbation of the Gaussian weight. Comm. Math. Phys. 355 (2017), no. 1, 303–338.
\bibitem{MS} J. Miller and S. Sheffield: Liouville quantum gravity and the Brownian map I: The QLE(8/3,0)
metric. Preprint arXiv:1507.00719.
\bibitem{musk} N.I. Muskhelishvili, Singular Integral Equations, Noordhoff, Groningen, 1953. Reprinted by Dover Publications, New York, 1992.
\bibitem{olver} F. Olver: Asymptotics and Special Functions. A. K. Peters, Wellesley, MA, 1997.
\bibitem{RhVa} R. Rhodes and V. Vargas: Gaussian multiplicative chaos and applications: a review. Probab. Surv.
11 (2014), 315–392.
\bibitem{RiVi} B. Rider and B. Vir\'ag: The noise in the circular law and the Gaussian free field. Int. Math. Res. Not. IMRN 2007, no. 2, Art. ID rnm006, 33 pp.
\bibitem{Sheffield}  S. Sheffield: Conformal weldings of random surfaces: SLE and the quantum gravity zipper. Ann.
Probab. 44 (2016), no. 5, 3474–3545.
\bibitem{Webb} C. Webb: The characteristic polynomial of a random unitary matrix and Gaussian multiplicative
chaos – the $L^2$-phase. Electron. J. Probab. 20 (2015), no. 104, 21 pp.
\bibitem{Whittaker} E.T. Whittaker and G.N. Watson: A Course of Modern Analysis, 4th Ed. Cambridge University Press, 1927.
\end{thebibliography}
\end{document}